\numberwithin{equation}{section} \swapnumbers
\newtheorem{satz}{Satz}[section]
\newtheorem{theorem}[satz]{Theorem}
\newtheorem{proposition}[satz]{Proposition}
\newtheorem{corollary}[satz]{Corollary}
\newtheorem{lemma}[satz]{Lemma}
\newtheorem{assumption}[satz]{Assumption}
\newtheorem{definition}[satz]{Definition}
\newtheorem{remark}[satz]{Remark}
\newtheorem{example}[satz]{Example}
\newcommand{\bbr}{\mathbb{R}}
\newcommand{\bbe}{\mathbb{E}}
\newcommand{\bbn}{\mathbb{N}}
\newcommand{\bbp}{\mathbb{P}}
\newcommand{\bbq}{\mathbb{Q}}
\newcommand{\bbf}{\mathbb{F}}
\newcommand{\bbi}{\mathbb{I}}
\newcommand{\bbk}{\mathbb{K}}
\newcommand{\bbl}{\mathbb{L}}
\newcommand{\bbs}{\mathbb{S}}
\newcommand{\bbx}{\mathbb{X}}
\newcommand{\calb}{\mathscr{B}}
\newcommand{\calc}{\mathscr{C}}
\newcommand{\calf}{\mathscr{F}}
\newcommand{\calg}{\mathscr{G}}
\newcommand{\cali}{\mathscr{I}}
\newcommand{\calk}{\mathscr{K}}
\newcommand{\calp}{\mathscr{P}}
\newcommand{\cals}{\mathscr{S}}
\newcommand{\calx}{\mathscr{X}}
\newcommand{\caly}{\mathscr{Y}}
\newcommand{\supp}{{\rm supp}}
\newcommand{\adm}{{\rm adm}}
\newcommand{\sfi}{{\rm sf}}
\newcommand{\bbI}{\mathbbm{1}}
\newcommand{\bdot}{\boldsymbol{\cdot}}
\begin{document}

\title[No-arbitrage concepts in topological vector lattices]{No-arbitrage concepts in topological vector lattices}
\author{Eckhard Platen \and Stefan Tappe}
\address{University of Technology Sydney, School of Mathematical and Physical Sciences, Finance Discipline Group, PO Box 123, Broadway, NSW 2007, Australia}
\email{eckhard.platen@uts.edu.au}
\address{Albert Ludwig University of Freiburg, Department of Mathematical Stochastics, Ernst-Zermelo-Stra\ss{}e 1, D-79104 Freiburg, Germany}
\email{stefan.tappe@math.uni-freiburg.de}
\date{8 April, 2021}
\thanks{We are grateful to Martin Schweizer and Josef Teichmann for valuable discussions. We are also grateful to the referee for helpful comments and suggestions. Stefan Tappe gratefully acknowledges financial support from the Deutsche Forschungsgemeinschaft (DFG, German Research Foundation) -- project number 444121509.}
\begin{abstract}
We provide a general framework for no-arbitrage concepts in topological vector lattices, which covers many of the well-known no-arbitrage concepts as particular cases. The main structural condition we impose is that the outcomes of trading strategies with initial wealth zero and those with positive initial wealth have the structure of a convex cone. As one consequence of our approach, the concepts NUPBR, NAA$_1$ and NA$_1$ may fail to be equivalent in our general setting. Furthermore, we derive abstract versions of the fundamental theorem of asset pricing (FTAP), including an abstract FTAP on Banach function spaces, and investigate when the FTAP is warranted in its classical form with a separating measure. We also consider a financial market with semimartingales which does not need to have a num\'{e}raire, and derive results which show the links between the no-arbitrage concepts by only using the theory of topological vector lattices and well-known results from stochastic analysis in a sequence of short proofs.
\end{abstract}
\keywords{no-arbitrage concept, topological vector lattice, abstract fundamental theorem of asset pricing, Banach function space, space of random variables, convex cone}
\subjclass[2020]{46A40, 46A16, 46E30, 46A20, 46N10}

\maketitle\thispagestyle{empty}

\section{Introduction}\label{sec-intro}

Let $(\Omega,\calg,\bbp)$ be a probability space, and let $\calk_0 \subset L^0(\Omega,\calg,\bbp)$ be a set of random variables, where we think of outcomes of trading strategies with initial wealth zero. Then an \emph{arbitrage opportunity} is an element $X \in \calk_0$ such that
\begin{align*}
\bbp(X \geq 0) = 1 \quad \text{and} \quad \bbp(X > 0) > 0.
\end{align*}
Therefore, \emph{No Arbitrage} (NA) means that
\begin{align*}
\calk_0 \cap L_+^0 = \{ 0 \} \quad \Longleftrightarrow \quad (\calk_0 - L_+^0) \cap L_+^0 = \{ 0 \}.
\end{align*}
It is well-known that for concrete financial models it is easy to find mathematical conditions which are sufficient for NA (like the existence of an equivalent martingale measure), but typically these conditions fail to be necessary for NA. In order to overcome this problem, two approaches have been suggested in the literature:
\begin{enumerate}
\item We choose a subspace of $L^0$, say $L^{\infty}$, and define the subset $\calc \subset L^{\infty}$ as
\begin{align*}
\calc := (\calk_0 - L_+^0) \cap L^{\infty}.
\end{align*}
Then NA can equivalently be written as
\begin{align*}
\calc \cap L_+^0 = \{ 0 \},
\end{align*}
and we consider the stronger condition
\begin{align*}
\overline{\calc} \cap L_+^0 = \{ 0 \},
\end{align*}
where the closure is taken with respect to some topology on $L^{\infty}$. If this topology is the norm topology on $L^{\infty}$, then we have the well-known concept of NFLVR, see \cite{DS-94}. It is well-known that for suitable semimartingale models in continuous time NFLVR is equivalent to the existence of an equivalent local martingale measure; see, for example, the papers \cite{DS-94, DS-98}, the textbook \cite{DS-book}, and also the paper \cite{Kabanov}.

\item Instead of considering the set $\calk_0$ of outcomes of trading strategies with initial wealth zero, we rather consider the outcomes $(\calk_{\alpha})_{\alpha > 0}$ of trading strategies with positive, but arbitrary small initial wealth $\alpha$. Then the appropriate concepts are NUPBR, NAA$_1$ and NA$_1$. It is well-known that for suitable semimartingale models in continuous time these three conditions are equivalent, and that they are satisfied if and only if there exists an equivalent local martingale deflator; see \cite{Takaoka-Schweizer}, and also the earlier papers \cite{Choulli-Stricker} and \cite{Kardaras-12}.
\end{enumerate}
The goal of this paper is to provide a general mathematical framework for no-arbitrage concepts which goes beyond the settings which have been considered in the literature so far. The idea is as follows. It is known that the space $L^0$ has rather poor topological properties. It fails to be a locally convex space, and its dual space is typically trivial. However, the space $(L^0,\leq)$ is an example of a topological vector lattice; indeed it is even a so-called Fr\'{e}chet lattice. The properties of the space $L^0$ and in particular its positive cone $L_+^0$ have already been studied in the literature, often with a focus to applications in finance; see, for example \cite{B-Schach, Weizsaecker, Filipovic-Kupper-Vogelpoth, Zitkovic, Kardaras-10, Kupper-Svindland, Kardaras-12-top, Kardaras-Z-13, Kardaras-14-L0, Kardaras-15-L0, Guo-2020}. We also mention the related paper \cite{Burzoni}, where the equivalence between economic viability and no-arbitrage in the presence of Knigthian uncertainty has been studied, and the recent paper \cite{Gregor}, where a reverse approach to model uncertainty is presented.

The observation that $(L^0,\leq)$ is a topological vector lattice motivates the general study of no-arbitrage concepts in topological vector lattices. For a topological vector lattice $(V,\leq)$ we consider the positive cone
\begin{align*}
V_+ := \{ x \in V : x \geq 0 \}.
\end{align*}
Furthermore, let $\calk_0 \subset V$ be a subset and let $(\calk_{\alpha})_{\alpha > 0}$ be a family of subsets such that certain structural conditions are satisfied. In particular, $\calk_0$ is supposed to be a convex cone and for each $\alpha > 0$ the set $\calk_{\alpha}$ is convex; we refer to Section \ref{sec-NA-tvs} for further details. Then NA simply means that
\begin{align*}
\calk_0 \cap V_+ = \{ 0 \} \quad \Longleftrightarrow \quad (\calk_0 - V_+) \cap V_+ = \{ 0 \}.
\end{align*}
Let us also indicate how the remaining above mentioned no-arbitrage concepts are defined:
\begin{enumerate}
\item Consider the convex cone $\calk_0$. Let $U \subset V$ be an ideal which is dense in $V$. Then $(U,\leq)$ is also a topological vector lattice with positive cone $U_+ = V_+ \cap U$. We define the subset $\calc \subset U$ as
\begin{align*}
\calc := (\calk_0 - V_+) \cap U.
\end{align*}
Then NA is satisfied if and only if
\begin{align*}
\calc \cap U_+ = \{ 0 \}.
\end{align*}
Let $\tau$ be a topology on $U$. Then we say that NFL$_{\tau}$ holds if the stronger condition
\begin{align*}
\overline{\calc}^{\tau} \cap U_+ = \{ 0 \}
\end{align*}
is fulfilled. In the particular case $V = L^0$ and $U = L^{\infty}$, we obtain the well-known concepts NFLVR, NFLBR and NFL; see \cite{DS-book} or \cite{Kabanov}.

\item Consider the family $(\calk_{\alpha})_{\alpha > 0}$. We define the family $(\calb_{\alpha})_{\alpha > 0}$ of convex and semi-solid subsets of $V_+$ as
\begin{align*}
\calb_{\alpha} := ( \calk_{\alpha} - V_+ ) \cap V_+, \quad \alpha > 0.
\end{align*}
We may think of all nonnegative elements which are equal to or below the outcome of a trading strategy with initial value $\alpha$. As we will show, then we have $\calb_{\alpha} = \alpha \calb$ for each $\alpha > 0$. Therefore, rather than focusing on all outcomes of trading strategies with positive initial wealth, it suffices to concentrate on all outcomes of trading strategies with initial wealth one. Mathematically speaking, we may focus on the set $\calb := \calb_1$ rather than on the whole family $(\calb_{\alpha})_{\alpha > 0}$. We introduce the no-arbitrage concepts as follows:
\begin{itemize}
\item NUPBR holds if $\calb$ is topologically bounded.

\item NAA$_1$ holds if $\calb$ is sequentially bounded.

\item NA$_1$ holds if $p_{\calb}(x) > 0$ for all $x \in V_+ \setminus \{ 0 \}$, where $p_{\calb}$ is the Minkowski functional, which can also be interpreted as the minimal superreplication price.
\end{itemize}
As we will show, in the particular case $V = L^0$ these concepts correspond to the well-known concepts used in the literature.
\end{enumerate}
In this paper we will introduce all these no-arbitrage concepts formally for a topological vector lattice $(V,\leq)$, show the connections between these concepts, and consider the particular situation where the topological vector lattice is the space $(L^0,\leq)$ of all random variables.

In particular, we will show that in a topological vector lattice the concepts NUPBR, NAA$_1$ and NA$_1$ are generally not equivalent. More precisely, the concepts NUPBR and NAA$_1$ are equivalent, and they are satisfied if and only if for every neighborhood of zero the Minkowski functional considered on $V_+$ is bounded from below by a positive constant outside this neighborhood; see Proposition \ref{prop-NUPBR-NAA1}. In particular, the Minkowski functional has no zeros on $V_+ \setminus \{ 0 \}$, and therefore NUPBR (or NAA$_1$) implies NA$_1$, but we also see that the converse can generally not be true; see Example \ref{example-counter} for a counter example. However, in the particular case $V = L^0$ these concepts are known to be equivalent, and in Theorem \ref{thm-NA-concepts-L0} we will present further equivalent conditions, including the von Weizs\"{a}cker property and the Banach Saks property of the convex subset $\calb$.

Topological vector lattices also provide a suitable framework for abstract versions of the fundamental theorem of asset pricing (FTAP); in particular if the ideal $U$ is a locally convex space or even a Banach function space. References about locally convex spaces with a view to applications in finance include \cite{Kreps, Raut, Schach-02, Jouini, Rokhlin-bounded, Cassese, Rokhlin-KY, Ari-Pekka-1, Ari-Pekka-2}. In this paper we present several abstract versions of the FTAP, including abstract FTAPs on locally convex spaces (see Theorems \ref{thm-FTAP-abstract-1} and \ref{thm-FTAP-abstract-2}), an abstract FTAP on spaces of continuous functions on compact sets (see Theorem \ref{thm-FTAP-Riesz}), and abstract FTAPs on Banach function spaces (see Theorem \ref{thm-FTAP-Banach} and Corollary \ref{cor-FTAP-Banach}). Our investigations show that those Banach function spaces, on which the FTAP is warranted in its classical form with a separating measure, are precisely $\sigma$-order continuous Banach function spaces. Later on, these findings are used for an abstract FTAP on $L^p$-spaces (see Corollary \ref{cor-FTAP-Lp}) and for an extension of the well-known no-arbitrage result in discrete time; see Theorem \ref{thm-discrete-time}.

Furthermore, using our general theory we will derive results for no-arbitrage concepts in a market with semimartingales which does not need to have a num\'{e}raire, in particular for self-financing portfolios; see Theorem \ref{thm-final} and Propositions \ref{prop-final-1}--\ref{prop-final-4}.

The remainder of this paper is organized as follows. In Section \ref{sec-tvs} we present the required background about topological vector lattices. In Section \ref{sec-NA-tvs} we introduce no-arbitrage concepts in topological vector lattices. In Section \ref{sec-abstract} we present versions of the abstract FTAP on locally convex spaces. In Section \ref{sec-random-var} we review the no-arbitrage concepts in the particular situation where the topological vector lattice is the space $L^0$ of random variables. In Section \ref{sec-BFS} we present versions of the abstract FTAP on Banach function spaces, and discuss on which spaces the FTAP is warranted in its classical form with a separating measure. In Section \ref{sec-stoch-proc} we consider a financial market with nonnegative semimartingales which does not need to have a num\'{e}raire, and derive consequences for the no-arbitrage concepts; in particular regarding self-financing portfolios.

\section{Topological vector lattices}\label{sec-tvs}

In this section we provide the required background about topological vector lattices and some related results. For further details about topological vector lattices, we refer, for example, to \cite[Chap. V]{Schaefer}.

Let $V$ be an $\bbr$-vector space. Furthermore, let $\leq$ be a binary relation over $V$ which is reflexive, anti-symmetric and transitive; more precisely:
\begin{itemize}
\item We have $x \leq x$ for all $x \in V$.

\item If $x \leq y$ and $y \leq x$, then we have $x = y$.

\item If $x \leq y$ and $y \leq z$, then we have $x \leq z$.
\end{itemize}
Then $(V,\leq)$ is called an \emph{ordered vector space} if the following axioms are satisfied:
\begin{enumerate}
\item If $x \leq y$, then we have $x+z \leq y+z$ for all $x,y,z \in V$.

\item If $x \leq y$, then we have $\alpha x \leq \alpha y$ for all $x,y \in V$ and $\alpha > 0$.
\end{enumerate}
Let $V$ be a topological vector space such that $(V,\leq)$ is an ordered vector space. Then we call $(V,\leq)$ an \emph{ordered topological vector space} if the positive cone
\begin{align*}
V_+ := \{ x \in V : x \geq 0 \} 
\end{align*}
is closed in $V$. A \emph{vector lattice} (or a \emph{Riesz space}) is an ordered vector space $(V,\leq)$ such that the supremum $x \vee y$ and the infimum $x \wedge y$ exist for all $x,y \in V$. We introduce further lattice operations. Namely, for $x \in V$ we define the positive part $x^+ := x \vee 0$, the negative part $x^- := -x \vee 0$, and the absolute value $|x| := x \vee (-x)$.

Let $(V,\leq)$ be a vector lattice. A subspace $U \subset V$ is called a \emph{vector sublattice} (or a \emph{Riesz subspace}) of $V$ if $x \vee y \in U$ for all $x,y \in U$. Then $(U,\leq)$ is a vector lattice with positive cone $U_+ = V_+ \cap U$.

A subset $A \subset V$ is called \emph{solid} if for all $x \in A$ and $y \in V$ with $|y| \leq |x|$ we have $y \in A$. A solid subspace $U \subset V$ is called an \emph{ideal}. Every ideal is a vector sublattice of $V$.

A topological vector space $V$ is called \emph{locally solid} if it has a zero neighborhood basis of solid sets. A vector lattice $(V,\leq)$ is called a \emph{topological vector lattice} if it is a Hausdorff topological vector space which is locally solid.

A topological vector space $V$ is called \emph{completely metrizable} if there is a metric $d$ on $V$ which induces the topology and for which the metric space $(V,d)$ is complete. A \emph{Fr\'{e}chet lattice} is a completely metrizable topological vector lattice.

A vector lattice $V$ equipped with a norm $\| \cdot \|$ such that $(V,\| \cdot \|)$ is a Banach space and for all $x,y \in V$ with $|x| \leq |y|$ we have $\| x \| \leq \| y \|$ is called a \emph{Banach lattice}. Note that every Banach lattice is a topological vector lattice. A Banach lattice $V$ is called \emph{order continuous} if for every net $(x_i)_{i \in I} \subset V$ with $x_i \downarrow 0$ we have $\| x_i \| \downarrow 0$, and it is called \emph{$\sigma$-order continuous} if for every sequence $(x_n)_{n \in \bbn} \subset V$ with $x_n \downarrow 0$ we have $\| x_n \| \downarrow 0$. An element $e \geq 0$ of a Banach lattice $V$ is called a \emph{weak unit} of $V$ if $e \wedge x = 0$ for $x \in V$ implies $x = 0$. A linear isomorphism $T : V \to W$ between to Banach lattices $V$ and $W$ is called an \emph{order isomorphism} if $T(x \vee y) = Tx \vee Ty$ and $T(x \wedge y) = Tx \wedge Ty$ for all $x,y \in V$.

For what follows, let $(V,\leq)$ be a topological vector lattice. Recall that we have five lattice operations
\begin{align*}
&V \times V \to V, \quad (x,y) \mapsto x \wedge y,
\\ &V \times V \to V, \quad (x,y) \mapsto x \vee y,
\\ &V \to V_+, \quad x \mapsto |x|,
\\ &V \to V_+, \quad x \mapsto x^+,
\\ &V \to V_+, \quad x \mapsto x^-.
\end{align*}

\begin{lemma}
The following statements are true:
\begin{enumerate}
\item The lattice operations are continuous.

\item $(V,\leq)$ is an ordered topological vector space.
\end{enumerate}
\end{lemma}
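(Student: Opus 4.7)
The plan is to proceed in two stages. First, I would establish continuity of the absolute value map $x \mapsto |x|$, and then obtain continuity of the remaining lattice operations as simple algebraic combinations of $|\cdot|$ with the (jointly) continuous vector space operations of addition and scalar multiplication. Finally, part (ii) falls out as an easy consequence of part (i) together with the Hausdorff axiom.

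For continuity of $x \mapsto |x|$, the key ingredient is the pointwise inequality
\begin{align*}
\bigl| \, |x| - |y| \, \bigr| \leq |x - y| \quad \text{for all } x,y \in V,
\end{align*}
which holds in every vector lattice and can be derived from $|x| = |(x-y) + y| \leq |x-y| + |y|$ and the symmetric inequality. Now fix $x_0 \in V$ and a neighborhood $W$ of $|x_0|$. Since $V$ is locally solid, there exists a solid neighborhood $U$ of $0$ such that $|x_0| + U \subset W$. By continuity of subtraction there is a neighborhood $N$ of $x_0$ with $N - x_0 \subset U$. For $x \in N$ we have $x - x_0 \in U$, hence $|x - x_0| \in U$ (solidity applied to $x - x_0$ together with $\bigl||x - x_0|\bigr| = |x - x_0|$), and then the displayed inequality together with solidity again yields $|x| - |x_0| \in U$, i.e.\ $|x| \in W$. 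Thus $|\cdot|$ is continuous.

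The remaining lattice operations are now continuous by composition, using the identities
\begin{align*}
x^+ = \tfrac{1}{2}(x + |x|), \qquad x^- = \tfrac{1}{2}(|x| - x), \qquad x \vee y = \tfrac{1}{2}\bigl(x + y + |x - y|\bigr), \qquad x \wedge y = \tfrac{1}{2}\bigl(x + y - |x - y|\bigr),
\end{align*}
since addition, subtraction and scalar multiplication are continuous in the topological vector space $V$.

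For (ii), I only need to verify that $V_+$ is closed in $V$. Since $V$ is Hausdorff, the singleton $\{0\}$ is closed. Since $V_+ = \{x \in V : x^- = 0\}$ and $x \mapsto x^-$ is continuous by the first part, $V_+$ is the preimage of $\{0\}$ under a continuous map, hence closed. I do not expect any serious obstacle; the only point requiring care is that one cannot argue continuity of $|\cdot|$ purely by translation (the map is not linear), so the locally solid structure must be used at every point, which is exactly what the argument above does.
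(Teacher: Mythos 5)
Your proof is correct. The paper itself does not argue anything here: it simply cites statements 7.1 and 7.2 of \cite{Schaefer} (continuity of the lattice operations and closedness of the positive cone in a locally solid Hausdorff topological vector lattice). What you have written out is, in effect, the standard proof behind those cited facts: the triangle-type inequality $\bigl||x|-|y|\bigr|\leq|x-y|$ combined with a two-fold use of solidity of a basic zero neighborhood gives continuity of $|\cdot|$, the remaining four operations follow from the usual algebraic identities together with continuity of the vector space operations, and $V_+=\{x\in V: x^-=0\}$ is closed as the preimage of the closed set $\{0\}$ (Hausdorff) under the continuous map $x\mapsto x^-$. All steps check out: in particular, your two applications of solidity are legitimate ($|x-x_0|\in U$ because $\bigl||x-x_0|\bigr|=|x-x_0|$, and $|x|-|x_0|\in U$ because its absolute value is dominated by $|x-x_0|$), and your closing caveat that one cannot reduce to continuity at $0$ by translation, since $|\cdot|$ is not additive, is exactly the right point of care. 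The only difference from the paper is that your argument is self-contained where the paper defers to a reference.
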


\begin{proof}
By statement 7.1 on page 234 in \cite{Schaefer} the lattice operations are continuous. Furthermore, by statement 7.2 on page 235 in \cite{Schaefer} the positive cone $V_+$ is closed, showing that $(V,\leq)$ is an ordered topological vector space.
\end{proof}

\begin{definition}
Let $\calb \subset V$ be a subset.
\begin{enumerate}
\item $\calb$ is called \emph{topologically bounded} if for every neighborhood $U \subset V$ of zero there is $\alpha > 0$ such that $\calb \subset \alpha U$.

\item $\calb$ is called \emph{sequentially bounded} if for every sequence $(x_n)_{n \in \bbn} \subset \calb$ and every sequence $(\alpha_n)_{n \in \bbn} \subset \bbr$ with $\alpha_n \to 0$ we have $\alpha_n x_n \to 0$.

\item $\calb$ is called \emph{circled} (or \emph{balanced}) if
\begin{align*}
\alpha \calb \subset \calb \quad \text{for all $\alpha \in [-1,1]$.}
\end{align*}
\item The \emph{Minkowski functional} $p_{\calb} : V \to [0,\infty]$ of $\calb$ is defined as
\begin{align*}
p_{\calb}(x) := \inf \{ \alpha > 0 : x \in \alpha \calb \}, \quad x \in V.
\end{align*}
\end{enumerate}
\end{definition}

\begin{lemma}
If $\calb \subset V$ is solid, then it is also circled.
\end{lemma}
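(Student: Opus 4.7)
The plan is to unwind the definitions: to show $\alpha \calb \subset \calb$ for $\alpha \in [-1,1]$, I would pick an arbitrary $x \in \calb$ and verify that $y := \alpha x$ satisfies the hypothesis $|y| \leq |x|$ of the solidity condition, which then immediately gives $y = \alpha x \in \calb$.

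The first step is to invoke the standard vector-lattice identity $|\alpha x| = |\alpha| \, |x|$ for $\alpha \in \bbr$ and $x \in V$; this is a basic property of absolute values in Riesz spaces (it follows, e.g., by splitting $\alpha$ into its sign and treating $\alpha \geq 0$ and $\alpha \leq 0$ separately using the definitions of $x \vee (-x)$). The second step is to note that $|\alpha| \leq 1$ implies $1 - |\alpha| \geq 0$, and since $|x| \in V_+$, the ordered vector space axioms give $(1 - |\alpha|) |x| \geq 0$, i.e., $|\alpha| \, |x| \leq |x|$. Combining these yields $|\alpha x| \leq |x|$.

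There is no real obstacle here — the statement is essentially a one-line consequence of the definitions once the homogeneity of the absolute value is available. The only thing to be slightly careful about is that the case $\alpha = 0$ is included (then $\alpha x = 0$ and $|0| = 0 \leq |x|$, so solidity gives $0 \in \calb$, which requires $\calb \neq \emptyset$; if $\calb = \emptyset$ the claim is vacuous). Otherwise the argument applies uniformly to all $\alpha \in [-1,1]$ and all $x \in \calb$, yielding $\alpha \calb \subset \calb$ as required.
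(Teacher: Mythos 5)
Your argument is correct and is essentially identical to the paper's one-line proof: both reduce the claim to the identity $|\alpha x| = |\alpha|\,|x| \leq |x|$ for $|\alpha| \leq 1$ and then apply solidity. The extra care you take with the homogeneity of $|\cdot|$ and the $\alpha = 0$ case is fine but not needed beyond what the paper already records.
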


\begin{proof}
Let $x \in \calb$ and $\alpha \in [-1,1]$ be arbitrary. Then we have $|\alpha x| = |\alpha| \, |x| \leq |x|$, and hence $\alpha x \in \calb$.
\end{proof}

\begin{definition}
Let $\calb \subset V_+$ be a subset.
\begin{enumerate}
\item $\calb$ is called \emph{semi-circled} (or \emph{semi-balanced}) if
\begin{align*}
\alpha \calb \subset \calb \quad \text{for all $\alpha \in [0,1]$.}
\end{align*}
\item $\calb$ is called \emph{semi-solid} if for all $x \in \calb$ and all $y \in V_+$ with $y \leq x$ we have $y \in \calb$.
\end{enumerate}
\end{definition}

\begin{lemma}
If $\calb \subset V_+$ is semi-solid, then it is also semi-circled.
\end{lemma}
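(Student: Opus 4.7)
The plan is straightforward: given $x \in \calb$ and $\alpha \in [0,1]$, I need to produce $\alpha x \in \calb$ using only the semi-solid property. The key observation is that for elements of the positive cone, multiplication by a scalar in $[0,1]$ produces an element dominated by the original in the order; so semi-solidity, which handles precisely such dominated positive elements, will give semi-circledness for free.

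Concretely, I would first check that $\alpha x \in V_+$. Since $x \in \calb \subset V_+$ we have $x \geq 0$, and since $\alpha \geq 0$, the axioms of an ordered vector space give $\alpha x \geq 0$. Next, I would show $\alpha x \leq x$. Because $1 - \alpha \geq 0$ and $x \geq 0$, we again get $(1-\alpha) x \geq 0$, and adding $\alpha x$ to both sides (using the translation axiom) yields $\alpha x \leq x$. Combining $\alpha x \in V_+$, $\alpha x \leq x$, and $x \in \calb$, the semi-solidity of $\calb$ delivers $\alpha x \in \calb$, which is exactly the semi-circled condition.

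There is no real obstacle here; the argument only uses the ordered vector space axioms stated earlier in the excerpt and the definitions of semi-solid and semi-circled. The only subtle point worth noting is that semi-solidity is stated for $y \in V_+$ with $y \leq x$, so I must verify the positivity of $\alpha x$ separately (which I do via $\alpha \geq 0$ and $x \in V_+$) before appealing to it.
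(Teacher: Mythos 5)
Your proof is correct and follows exactly the paper's argument: one shows $0 \leq \alpha x \leq x$ and then invokes semi-solidity. The extra detail you supply (deriving $\alpha x \geq 0$ and $\alpha x \leq x$ from the ordered vector space axioms) simply makes explicit what the paper leaves as obvious.
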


\begin{proof}
Let $x \in \calb$ and $\alpha \in [0,1]$ be arbitrary. Then we have $0 \leq \alpha x \leq x$, and hence $\alpha x \in \calb$.
\end{proof}

Recall that a subset $\calk \subset V$ is called \emph{convex} if
\begin{align*}
\lambda x + (1-\lambda) y \in \calk
\end{align*}
for all $x,y \in \calk$ and all $\lambda \in [0,1]$.

\begin{lemma}\label{lemma-B-conv-semi}
Let $\calk \subset V$ be a subset. We define the subset $\calb \subset V_+$ as
\begin{align*}
\calb := (\calk - V_+) \cap V_+. 
\end{align*}
Then the following statements are true:
\begin{enumerate}
\item $\calb$ is semi-solid.

\item If $\calk$ is convex, then $\calb$ is also convex.
\end{enumerate}
\end{lemma}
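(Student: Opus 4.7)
The statement splits cleanly into two elementary verifications, and both reduce to the fact that $V_+$ is a convex cone, together with how the defining expression $\calb = (\calk - V_+)\cap V_+$ accommodates small perturbations.

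For part (1), the plan is to start with an arbitrary $x \in \calb$ and $y \in V_+$ with $y \leq x$, write $x = k - v$ with $k \in \calk$ and $v \in V_+$, and then exhibit a decomposition of $y$ of the same form. The natural choice is $y = k - \bigl(v + (x - y)\bigr)$: the term $x - y$ lies in $V_+$ because $y \leq x$, so $v + (x - y) \in V_+$ since $V_+$ is closed under addition, while $k \in \calk$ is unchanged. Combined with $y \in V_+$ by assumption, this gives $y \in (\calk - V_+) \cap V_+ = \calb$.

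For part (2), given $x_1,x_2 \in \calb$ and $\lambda \in [0,1]$, I would write $x_i = k_i - v_i$ with $k_i \in \calk$ and $v_i \in V_+$, and then observe
\begin{equation*}
\lambda x_1 + (1-\lambda) x_2 = \bigl(\lambda k_1 + (1-\lambda) k_2\bigr) - \bigl(\lambda v_1 + (1-\lambda) v_2\bigr).
\end{equation*}
Convexity of $\calk$ places the first parenthesized term in $\calk$, and $V_+$ being a convex cone places the second in $V_+$. Since $V_+$ is itself convex, $\lambda x_1 + (1-\lambda) x_2 \in V_+$, so the convex combination lies in $\calb$.

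Neither step has a serious obstacle; the only mild subtlety is in part (1), where one has to resist the temptation to perturb $k$ rather than $v$ (doing so would require $\calk$ itself to absorb the defect $x - y$, which is not assumed). Using the slack variable $v$ to soak up $x - y$ is the right move and relies only on $V_+$ being additively closed, a property that holds in any ordered vector space and is available without any topological or lattice-theoretic input.
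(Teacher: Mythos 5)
Your proof is correct and follows essentially the same route as the paper's: for semi-solidity you absorb the defect $x-y$ into the $V_+$-component of the decomposition, and for convexity you take convex combinations componentwise, using that both $\calk$ and $V_+$ are convex. No gaps.
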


\begin{proof}
Let $x \in \calb$ and $y \in V$ with $0 \leq y \leq x$ be arbitrary. Then we have $y \in V_+$. Furthermore, we have
\begin{align*}
y = x - (x-y) \in \calk - V_+,
\end{align*}
because $x \in \calk - V_+$ and $x-y \in V_+$. Therefore, we have $y \in \calb$.

Now, we assume that $\calk$ is also convex. Let $x,y \in \calb$ and $\lambda \in [0,1]$ be arbitrary. Since $V_+$ is a convex cone, we have $x + \lambda(y-x) \in V_+$. There exist $c,d \in \calk$ and $v,w \in V_+$ such that $x = c-v$ and $y = d - w$. Since $\calk$ is convex, we obtain
\begin{align*}
x + \lambda(y-x) = \underbrace{c + \lambda(d-c)}_{\in \calk} - \underbrace{( v + \lambda (w-v) )}_{\in V_+} \in \calk - V_+,
\end{align*}
showing that $\calb$ is also convex.
\end{proof}

\begin{lemma}\label{lemma-B0-in-closure}
Let $U \subset V$ be an ideal which is dense in $V$. Then for every subset $\calk \subset V$ we have
\begin{align*}
\calb \subset \overline{\calc \cap U_+},
\end{align*}
where $\calb = (\calk - V_+) \cap V_+$, $\calc = ( \calk - V_+ ) \cap U$ and $U_+ = V_+ \cap U$.
\end{lemma}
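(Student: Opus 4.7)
The plan is to fix an arbitrary $x \in \calb$ and construct, via the density of $U$ in $V$ and the lattice operations, a net in $\calc \cap U_+$ converging to $x$. By the definition of $\calb$, we may write $x = k - v$ for some $k \in \calk$ and $v \in V_+$, and in addition $x \in V_+$. Since $U$ is dense in $V$, pick a net $(u_i)_{i \in I} \subset U$ with $u_i \to x$.

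The key construction is to set
\[
v_i := u_i^+ \wedge x, \quad i \in I.
\]
I expect three things to be verified, each of which is short: first, $v_i \to x$; second, $v_i \in U_+$; and third, $v_i \in \calk - V_+$. For the convergence, I use the continuity of the lattice operations together with $x \in V_+$ (so $x^+ = x$): $v_i = u_i^+ \wedge x \to x^+ \wedge x = x \wedge x = x$. For membership in $U_+$, I observe $0 \leq v_i \leq u_i^+ \leq |u_i|$; since $U$ is an ideal, hence solid, this forces $v_i \in U$, and $v_i \geq 0$ gives $v_i \in U_+$. For membership in $\calk - V_+$, note that $v_i \leq x$ implies $x - v_i \in V_+$, so
\[
v_i = x - (x - v_i) = k - \bigl( v + (x - v_i) \bigr) \in \calk - V_+,
\]
because $v + (x - v_i) \in V_+$.

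Combining the second and third points gives $v_i \in (\calk - V_+) \cap U \cap V_+ = \calc \cap U_+$, and combined with $v_i \to x$ this yields $x \in \overline{\calc \cap U_+}$, as desired. I do not anticipate any real obstacle; the only non-routine step is spotting the correct truncation. Taking directly $u_i \wedge x$ would not enforce positivity, while taking $u_i^+$ alone would not enforce $v_i \in \calk - V_+$. Meeting both conditions simultaneously is exactly what the double operation $u_i^+ \wedge x$ achieves, and the hypothesis that $U$ is an ideal (not merely a subspace) is essential to keep the truncation inside $U$.
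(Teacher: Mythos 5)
Your proof is correct and follows essentially the same route as the paper: the same truncation $u_i^+ \wedge x$ of a net from the dense ideal $U$, with the same three verifications (convergence via continuity of the lattice operations, membership in $U_+$ via solidity of $U$, and membership in $\calk - V_+$ via $v_i \leq x$). The only cosmetic difference is that the paper routes the last step through the semi-solidity of $\calb$ (Lemma \ref{lemma-B-conv-semi}) rather than writing out the decomposition $v_i = k - (v + (x - v_i))$ directly, which amounts to the same computation.
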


\begin{proof}
By Lemma \ref{lemma-B-conv-semi} the subset $\calb$ is semi-solid. Furthermore, $(U,\leq)$ is a topological vector lattice with positive cone $U_+$ because $U \subset V$ be an ideal. Let $x \in \calb$ be arbitrary. Then we have $x \in \calk - V_+$ and $x \in V_+$. Since $U$ is dense in $V$, there is a net $(x_i)_{i \in I} \subset U$ with $x_i \to x$. Since $x \in V_+$ and the lattice operations are continuous, this gives us $x_i^+ \wedge x \to x$. Let $i \in I$ be arbitrary. Then we have $0 \leq x_i^+ \wedge x \leq x$, and hence $x_i^+ \wedge x \in \calb$ because $\calb$ is semi-solid. In particular, we have $x_i^+ \wedge x \in \calk - V_+$. Furthermore, we have $0 \leq x_i^+ \wedge x \leq x_i^+$ and $x_i^+ \in U_+$. Since $U_+$ is a semi-solid subset of $V$, we deduce $x_i^+ \wedge x \in U_+$. Consequently, we have $x_i^+ \wedge x \in \calc \cap U_+$, and hence $x \in \overline{\calc \cap U_+}$.
\end{proof}

\begin{lemma}\label{lemma-bounded-pos-cone}
For a subset $\calb \subset V_+$ the following statements are equivalent:
\begin{enumerate}
\item[(i)] $\calb$ is sequentially bounded.

\item[(ii)] For every sequence $(x_n)_{n \in \bbn} \subset \calb$ and every sequence $(\alpha_n)_{n \in \bbn} \subset (0,\infty)$ with $\alpha_n \downarrow 0$ we have $\alpha_n x_n \to 0$.
\end{enumerate}
\end{lemma}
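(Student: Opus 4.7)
The implication (i) $\Rightarrow$ (ii) is immediate, since any sequence $(\alpha_n) \subset (0,\infty)$ with $\alpha_n \downarrow 0$ is in particular a real sequence converging to $0$, so the definition of sequential boundedness applies directly.

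For (ii) $\Rightarrow$ (i), let $(x_n)_{n \in \bbn} \subset \calb$ and $(\alpha_n)_{n \in \bbn} \subset \bbr$ with $\alpha_n \to 0$. I would first reduce to the positive case: since $x_n \geq 0$ we have $|\alpha_n x_n| = |\alpha_n| x_n$, and because $V$ is locally solid, $|\alpha_n| x_n \to 0$ will imply $\alpha_n x_n \to 0$. So the goal is to prove $|\alpha_n| x_n \to 0$.

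The key trick is to dominate $|\alpha_n|$ by a strictly positive, monotonically decreasing null sequence so that hypothesis (ii) can be applied. I would set, for example,
\begin{align*}
\beta_n := \sup_{k \geq n} |\alpha_k| + \tfrac{1}{n}, \quad n \in \bbn.
\end{align*}
Then $\beta_n > 0$ for every $n$, the sequence $(\beta_n)$ is non-increasing, and $\beta_n \downarrow 0$ because $\sup_{k \geq n} |\alpha_k| \to 0$ as $\alpha_n \to 0$. Moreover, $|\alpha_n| \leq \beta_n$ for all $n \in \bbn$.

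By (ii), applied to $(x_n) \subset \calb$ and $(\beta_n) \subset (0,\infty)$, we obtain $\beta_n x_n \to 0$. Since $x_n \in V_+$ and $0 \leq |\alpha_n| \leq \beta_n$, it follows that
\begin{align*}
0 \leq |\alpha_n| x_n \leq \beta_n x_n \quad \text{for all $n \in \bbn$}.
\end{align*}
Because $V$ has a zero neighborhood basis of solid sets, any solid neighborhood $U$ of zero eventually contains $\beta_n x_n$, and solidity together with the inequality above then forces $|\alpha_n| x_n \in U$ for all sufficiently large $n$. Thus $|\alpha_n| x_n \to 0$, and hence $\alpha_n x_n \to 0$ as desired.

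There is no substantial obstacle here; the only subtle point is ensuring that the dominating sequence $(\beta_n)$ is both strictly positive and monotonically decreasing to zero so that the restricted hypothesis (ii) is applicable, and then invoking the local solidity of $V$ to pass from the domination $0 \leq |\alpha_n| x_n \leq \beta_n x_n$ to convergence.
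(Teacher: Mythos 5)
Your proof is correct and follows essentially the same strategy as the paper's: dominate $|\alpha_n|$ by a strictly positive, decreasing null sequence $(\beta_n)$, apply (ii) to it, and then use a solid zero neighborhood to transfer the convergence from $\beta_n x_n$ to $\alpha_n x_n$. Your explicit choice $\beta_n = \sup_{k \geq n}|\alpha_k| + \tfrac{1}{n}$ is in fact slightly cleaner than the paper's subsequence-based construction, since it dominates $|\alpha_n|$ for all $n$ rather than only eventually.
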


\begin{proof}
(i) $\Rightarrow$ (ii): This implication is obvious.

\noindent(ii) $\Rightarrow$ (i): Let $(x_n)_{n \in \bbn} \subset \calb$ and $(\alpha_n)_{n \in \bbn} \subset \bbr$ be sequences with $\alpha_n \to 0$. There exist a decreasing sequence $(\beta_n)_{n \in \bbn} \subset (0,\infty)$ with $\beta_n \downarrow 0$ and an index $n_1 \in \bbn$ such that
\begin{align}\label{seq-beta}
|\alpha_n| \leq \beta_n \quad \text{for each $n \geq n_1$.}
\end{align}
Indeed, since $\alpha_n \to 0$ there is a subsequence $(n_k)_{k \in \bbn}$ such that for each $k \in \bbn$ we have
\begin{align*}
|\alpha_n| \leq k^{-1} \quad \text{for all $n \geq n_k$.}
\end{align*}
We define the sequence $(\beta_n)_{n \in \bbn} \subset (0,\infty)$ as
\begin{align*}
\beta_n := k^{-1} \quad \text{if $n_k \leq n < n_{k+1}$.}
\end{align*}
Then we have (\ref{seq-beta}). Now, let $U \subset V$ be an arbitrary zero neighborhood. Since $V$ is locally solid, we may assume that $U$ is solid, and hence circled. By assumption there exists an index $N \geq n_1$ such that
\begin{align*}
\beta_n x_n \in U \quad \text{for all $n \geq N$.}
\end{align*}
Since $U$ is circled, by (\ref{seq-beta}) we also have
\begin{align*}
\alpha_n x_n \in U \quad \text{for all $n \geq N$,}
\end{align*}
showing that $\alpha_n x_n \to 0$.
\end{proof}

\begin{lemma}\label{lemma-semi-balanced}
Let $\calb \subset V_+$ be a semi-circled subset. Then the following statements are true:
\begin{enumerate}
\item We have $0 \in \calb$.

\item For each $\alpha \geq 0$ the set $\alpha \calb$ is also semi-circled.

\item We have $\alpha \calb \subset \beta \calb$ for all $\alpha,\beta \in \bbr_+$ with $\alpha \leq \beta$.
\end{enumerate}
\end{lemma}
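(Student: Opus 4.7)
All three claims follow in a few lines from the defining property $\lambda \calb \subset \calb$ for $\lambda \in [0,1]$ together with the linearity of scalar multiplication, so I will work through them in order, highlighting only where a small case distinction is needed.

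For (1), I would take any $x \in \calb$ (tacitly assuming $\calb \neq \emptyset$, since otherwise the statement is vacuous in one direction and false in the other) and apply the semi-circled property with the scalar $0 \in [0,1]$: this yields $0 = 0 \cdot x \in 0 \cdot \calb \subset \calb$.

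For (2), fix $\alpha \geq 0$ and an arbitrary $\lambda \in [0,1]$. The key observation is that scalar multiplication commutes, so
\begin{align*}
\lambda (\alpha \calb) = (\lambda \alpha) \calb = (\alpha \lambda) \calb = \alpha (\lambda \calb) \subset \alpha \calb,
\end{align*}
where the inclusion uses that $\lambda \calb \subset \calb$ by semi-circledness of $\calb$. Hence $\alpha \calb$ is semi-circled.

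For (3), let $0 \leq \alpha \leq \beta$. If $\beta = 0$, then also $\alpha = 0$, and by part (1) we have $\alpha \calb = \{0\} = \beta \calb$, so the inclusion is trivial. If $\beta > 0$, then $\alpha/\beta \in [0,1]$, so semi-circledness gives $(\alpha/\beta)\calb \subset \calb$, and multiplying by $\beta$ yields
\begin{align*}
\alpha \calb = \beta \cdot \bigl( (\alpha/\beta) \calb \bigr) \subset \beta \calb.
\end{align*}
There is no real obstacle here; the only minor care is handling $\beta = 0$ in (3), which is why part (1) is stated first and used in that case.
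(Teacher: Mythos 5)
Your proof is correct and is precisely the routine argument the paper omits (its proof reads only ``The proof is obvious, and therefore omitted''). Your parenthetical remark that (1) tacitly requires $\calb \neq \emptyset$ is a fair observation — the empty set is vacuously semi-circled but does not contain $0$ — though this edge case is harmless for the sets $\calb$ actually used in the paper.
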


\begin{proof}
The proof is obvious, and therefore omitted.
\end{proof}

\begin{lemma}\label{lemma-Minkowski}
Let $\calb \subset V_+$ be a semi-circled subset. Then the following statements are true:
\begin{enumerate}
\item We have $p_{\calb}(0) = 0$.

\item We have $p_{\calb}(x) \leq 1$ for all $x \in \calb$.

\item We have $p_{\calb}(x) \geq 1$ for all $x \in V_+ \setminus \calb$.

\item We have $p_{\calb}(\alpha x) = \alpha \cdot p_{\calb}(x)$ for all $x \in \calb$ and $\alpha \in \bbr_+$.

\item If $\calb$ is semi-solid, then we have $p_{\calb}(x) \leq p_{\calb}(y)$ for all $x,y \in \calb$ with $x \leq y$.
\end{enumerate}
\end{lemma}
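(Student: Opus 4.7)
My plan is to treat each of the five claims by unpacking the definition of the Minkowski functional $p_{\calb}(x) = \inf\{\alpha > 0 : x \in \alpha \calb\}$ and combining it with the three elementary facts from Lemma \ref{lemma-semi-balanced}: that $0 \in \calb$, that $\alpha \calb$ is semi-circled for every $\alpha \geq 0$, and that $\alpha \calb \subset \beta \calb$ whenever $0 \leq \alpha \leq \beta$. None of the five items requires any real machinery; the argument is purely set-theoretic bookkeeping around these three facts.

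For (1), I would note that $0 \in \calb$ yields $0 \in \alpha \calb$ for every $\alpha > 0$, so the defining infimum collapses to $\inf (0,\infty) = 0$. For (2), the choice $\alpha = 1$ is admissible since $x \in \calb = 1 \cdot \calb$. For (3), I would argue by contradiction: if $x \in \alpha \calb$ with $\alpha < 1$, then by Lemma \ref{lemma-semi-balanced}(3) we have $\alpha \calb \subset 1 \cdot \calb = \calb$, contradicting $x \notin \calb$; hence every admissible $\alpha$ satisfies $\alpha \geq 1$, and the infimum is bounded below by $1$.

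For (4), the case $\alpha = 0$ follows from (1), using (2) to ensure $p_{\calb}(x) \leq 1 < \infty$ so that the product $0 \cdot p_{\calb}(x)$ is unambiguous. For $\alpha > 0$, the substitution $\gamma = \beta/\alpha$ rewrites $\{\beta > 0 : \alpha x \in \beta \calb\}$ as $\alpha \cdot \{\gamma > 0 : x \in \gamma \calb\}$, yielding positive homogeneity. For (5), the one small observation I need to establish first is that $\alpha \calb$ inherits semi-solidity from $\calb$ whenever $\alpha > 0$: if $w \in V_+$ with $w \leq \alpha z$ for some $z \in \calb$, then $0 \leq w/\alpha \leq z$, whence $w/\alpha \in \calb$ by semi-solidity of $\calb$, i.e., $w \in \alpha \calb$. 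Given this, any $\alpha > 0$ with $y \in \alpha \calb$ forces $x \in \alpha \calb$ because $0 \leq x \leq y$, so $p_{\calb}(x) \leq \alpha$; taking the infimum over such $\alpha$ yields $p_{\calb}(x) \leq p_{\calb}(y)$.

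The proof is essentially formal; the only genuine obstacles are minor, namely verifying that semi-solidity is preserved under positive scaling in (5), and the implicit finiteness check in (4) that motivates restricting the hypothesis to $x \in \calb$. I do not expect any deeper difficulty.
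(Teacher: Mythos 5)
Your proposal is correct and follows essentially the same route as the paper: the first three items by direct inspection of the defining infimum, item (4) by the substitution $\gamma = \beta/\alpha$, and item (5) by noting that $y \in \alpha\calb$ forces $x \in \alpha\calb$ so that the admissible set for $y$ is contained in that for $x$. Your explicit check that semi-solidity is preserved under positive scaling merely spells out a step the paper leaves implicit.
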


\begin{proof}
The first three statements are obvious. Let $x \in \calb$ and $\alpha \in \bbr_+$ be arbitrary. We may assume that $\alpha > 0$ because otherwise the identity follows from the first statement. Since $\calb$ is semi-circled, for each $\beta > 0$ we have $\alpha x \in \beta \calb$ if and only if $x \in \frac{\beta}{\alpha} \calb$, and for each $\gamma > 0$ we have $x \in \gamma \calb$ if and only if $\alpha x \in \alpha \gamma \calb$. Therefore, we have
\begin{align*}
p_{\calb}(\alpha x) &= \inf \{ \beta > 0 : \alpha x \in \beta \calb \}
\\ &= \alpha \cdot \inf \{ \gamma > 0 : x \in \gamma \calb \} = \alpha \cdot p_{\calb}(x).
\end{align*}
Now assume that $\calb$ is semi-solid, and let $x,y \in \calb$ with $x \leq y$ be arbitrary. Then for each $\alpha > 0$ with $y \in \alpha \calb$ we have $x \in \alpha \calb$, and hence
\begin{align*}
p_{\calb}(x) &= \inf \{ \alpha > 0 : x \in \alpha \calb \}
\\ &\leq \inf \{ \alpha > 0 : y \in \alpha \calb \} = p_{\calb}(y),
\end{align*}
completing the proof.
\end{proof}

For each $\alpha \geq 0$ we agree on the notation
\begin{align*}
\{ p_{\calb} \leq \alpha \} := \{ x \in V : p_{\calb}(x) \leq \alpha \}.
\end{align*}

\begin{lemma}\label{lemma-alpha-B}
Let $\calb \subset V_+$ be a semi-circled subset. Then we have
\begin{align*}
\alpha \calb = \{ p_{\calb} \leq \alpha \} \cap V_+ \quad \text{for each $\alpha \in (0,1)$.}
\end{align*}
\end{lemma}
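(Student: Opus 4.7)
The plan is to prove the two inclusions separately, with both relying on the properties of the Minkowski functional established in Lemma \ref{lemma-Minkowski}.

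For the inclusion $\alpha\calb \subset \{p_\calb \leq \alpha\} \cap V_+$, I would take $x \in \alpha\calb$ and write $x = \alpha y$ with $y \in \calb$. Since $\alpha > 0$ and $\calb \subset V_+$, we have $x \in V_+$. Then Lemma \ref{lemma-Minkowski}(4) gives $p_\calb(x) = \alpha\, p_\calb(y)$, and Lemma \ref{lemma-Minkowski}(2) gives $p_\calb(y) \leq 1$, whence $p_\calb(x) \leq \alpha$. This half uses only $\alpha > 0$ and does not invoke $\alpha < 1$.

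For the reverse inclusion $\{p_\calb \leq \alpha\} \cap V_+ \subset \alpha\calb$, take $x \in V_+$ with $p_\calb(x) \leq \alpha$. The hypothesis $\alpha < 1$ enters decisively here: since $p_\calb(x) < 1$, the contrapositive of Lemma \ref{lemma-Minkowski}(3) places $x$ inside $\calb$. To reach $x \in \alpha\calb$, I would unwind the definition $p_\calb(x) = \inf\{\beta > 0 : x \in \beta\calb\}$ to produce some $\beta$ with $p_\calb(x) \leq \beta \leq \alpha$ and $x \in \beta\calb$; writing $x = \beta z$ with $z \in \calb$, the semi-circled property applied to the scalar $\beta/\alpha \in [0,1]$ yields $(\beta/\alpha)z \in \calb$, and hence $x = \alpha\cdot(\beta/\alpha)z \in \alpha\calb$.

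The main obstacle is the boundary case $p_\calb(x) = \alpha$, where the infimum in the definition of $p_\calb$ need not be attained and one can only pick $\beta$ strictly greater than $\alpha$. My fallback strategy would be to approximate: choose $\beta_n \downarrow \alpha$ with $x = \beta_n z_n$, $z_n \in \calb$, so that $z_n = x/\beta_n$ converges to $x/\alpha$, and attempt to conclude $x/\alpha \in \calb$ by combining the semi-circled rescaling $(\alpha/\beta_n)z_n \in \calb$ with whatever sequential closure properties are available. This is the step where the proof is most delicate, and any residual gap there would point to an implicit closedness assumption on $\calb$ being essential to make the equality literal.
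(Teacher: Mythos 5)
Your first inclusion is correct, and your diagnosis of the second is the important part of your write-up: the boundary case $p_{\calb}(x)=\alpha$ is not a removable technicality but a genuine failure of the stated equality. Take $V=\bbr$ with the usual order and topology, $V_+=[0,\infty)$, and $\calb=[0,1)$, which is convex, semi-solid and hence semi-circled. For $\alpha=\tfrac{1}{2}$ and $x=\tfrac{1}{2}$ one has $\{\beta>0: x\in\beta\calb\}=(\tfrac{1}{2},\infty)$, so $p_{\calb}(x)=\tfrac{1}{2}\leq\alpha$, yet $x\notin\alpha\calb=[0,\tfrac{1}{2})$. So $\{p_{\calb}\leq\alpha\}\cap V_+\not\subset\alpha\calb$ in general, and your fallback via $\beta_n\downarrow\alpha$ cannot succeed without a closedness assumption on $\calb$ that the lemma does not make. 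You should also know that the paper's own proof stumbles at exactly the point you identified: it asserts that $\inf\{\beta>0: x\in\beta\calb\}\leq\alpha$ holds if and only if $x\in\alpha\calb$, citing only the monotonicity $\beta\calb\subset\gamma\calb$ for $\beta\leq\gamma$ from Lemma \ref{lemma-semi-balanced}; that monotonicity shows $\{\beta>0: x\in\beta\calb\}$ is an interval unbounded to the right, but says nothing about whether its infimum is attained.

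What is true without further hypotheses, and what your argument actually proves, is the sandwich
\begin{align*}
\{ p_{\calb} < \alpha \} \cap V_+ \ \subset\ \alpha\calb \ \subset\ \{ p_{\calb} \leq \alpha \} \cap V_+ ,
\end{align*}
since $p_{\calb}(x)<\alpha$ produces some $\beta<\alpha$ with $x\in\beta\calb\subset\alpha\calb$. This weaker statement suffices for the only place the lemma is used: in Proposition \ref{prop-bounded}, given $\alpha\calb\subset U$ one gets $\{p_{\calb}\leq\alpha'\}\cap V_+\subset\alpha\calb\subset U$ for any $\alpha'<\alpha$, and the converse direction uses only the easy inclusion $\alpha\calb\subset\{p_{\calb}\leq\alpha\}\cap V_+$. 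So the correct resolution is not to look for a sequential-closure argument but to weaken the lemma to the sandwich above (or to add the hypothesis that $\alpha\calb$ is closed in $V_+$).
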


\begin{proof}
Let $x \in \calb$ be arbitrary. Then we have $p_{\calb}(x) \leq \alpha$ if and only if
\begin{align*}
\inf \{ \beta > 0 : x \in \beta \calb \} \leq \alpha.
\end{align*}
Since $\calb$ is semi-circled, by Lemma \ref{lemma-semi-balanced} this is the case if and only if $x \in \alpha \calb$. This proves
\begin{align*}
\alpha \calb = \{ p_{\calb} \leq \alpha \} \cap \calb.
\end{align*}
Since $p_{\calb}(x) \geq 1$ for all $x \in V_+ \setminus \calb$, this completes the proof.
\end{proof}

\begin{proposition}\label{prop-bounded}
Let $\calb \subset V_+$ be a semi-circled subset. Then the following statements are equivalent:
\begin{enumerate}
\item[(i)] $\calb$ is topologically bounded.

\item[(ii)] $\calb$ is sequentially bounded.

\item[(iii)] For each neighborhood $U \subset V$ of zero there exists $\alpha \in (0,1)$ such that
\begin{align*}
\{ p_{\calb} \leq \alpha \} \cap V_+ \subset U \cap V_+.
\end{align*}
\end{enumerate}
In either case, we have $p_{\calb}(x) > 0$ for all $x \in V_+ \setminus \{ 0 \}$.
\end{proposition}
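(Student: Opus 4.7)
The plan is to prove the three equivalences in a cycle using Lemma \ref{lemma-alpha-B} as the key translation between the sets $\alpha \calb$ and the sublevel sets $\{ p_{\calb} \leq \alpha \} \cap V_+$, and then to deduce the final assertion $p_{\calb} > 0$ on $V_+ \setminus \{ 0 \}$ from sequential boundedness by exploiting the semi-circled structure. Concretely, I would establish (i) $\Leftrightarrow$ (ii) by general topological vector space arguments (using local solidity only to pass to balanced zero neighborhoods) and (i) $\Leftrightarrow$ (iii) by a direct rewriting using Lemma \ref{lemma-alpha-B}.

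For (i) $\Rightarrow$ (ii), given $(x_n)_{n \in \bbn} \subset \calb$ and $\alpha_n \to 0$ in $\bbr$, fix a zero neighborhood $U$; by local solidity I may assume $U$ is solid and hence balanced. Topological boundedness gives $\lambda > 0$ with $\lambda^{-1} \calb \subset U$, and for $n$ large enough $|\alpha_n \lambda| \leq 1$, so $\alpha_n x_n = (\alpha_n \lambda)(\lambda^{-1} x_n) \in U$ by balancedness. Conversely, for (ii) $\Rightarrow$ (i), I would argue by contradiction: if $\calb \not\subset n U$ for some zero neighborhood $U$ and all $n \in \bbn$, pick $x_n \in \calb$ with $n^{-1} x_n \notin U$; then with $\alpha_n := n^{-1} \to 0$ the sequence $\alpha_n x_n$ stays outside $U$, contradicting sequential boundedness.

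For (i) $\Rightarrow$ (iii), given a zero neighborhood $U$, reduce to $U$ solid and hence balanced. Choose $\beta > 0$ with $\calb \subset \beta U$ and set $\alpha := \min\{ 1/2, 1/\beta \} \in (0,1)$; then $\alpha \beta \leq 1$ and balancedness give $\alpha \calb \subset \alpha \beta U \subset U$. Lemma \ref{lemma-alpha-B} now identifies $\alpha \calb = \{ p_{\calb} \leq \alpha \} \cap V_+$, which yields (iii). Conversely, for (iii) $\Rightarrow$ (i), given $U$ apply (iii) to obtain $\alpha \in (0,1)$ with $\{ p_{\calb} \leq \alpha \} \cap V_+ \subset U$; then Lemma \ref{lemma-alpha-B} gives $\alpha \calb \subset U$, i.e.\ $\calb \subset \alpha^{-1} U$, proving topological boundedness.

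It remains to verify the final claim. Assume for contradiction that $p_{\calb}(x) = 0$ for some $x \in V_+ \setminus \{ 0 \}$. By definition of the Minkowski functional there exist $\beta_n \downarrow 0$ with $x \in \beta_n \calb$, and since $\calb$ is semi-circled, Lemma \ref{lemma-semi-balanced}(iii) makes the map $\beta \mapsto \beta \calb$ increasing, so $x \in \beta \calb$ for every $\beta > 0$. Writing $x = n^{-1} y_n$ with $y_n \in \calb$ (hence $y_n = n x$), set $\alpha_n := n^{-1} \to 0$; then $\alpha_n y_n = x$, which does not converge to zero in the Hausdorff space $V$, contradicting sequential boundedness. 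The whole argument is essentially bookkeeping around the earlier lemmas; the only mild subtlety is the strict inequality $\alpha < 1$ required by Lemma \ref{lemma-alpha-B}, handled in (i) $\Rightarrow$ (iii) by the explicit choice $\alpha = \min\{1/2, 1/\beta\}$.
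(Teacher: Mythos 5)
Your proof is correct and follows essentially the same route as the paper: the equivalence (i) $\Leftrightarrow$ (iii) is the same rewriting via Lemma \ref{lemma-alpha-B}, while for (i) $\Leftrightarrow$ (ii) you supply the standard locally-solid argument that the paper simply cites from K\"othe/Schaefer, and your derivation of the final claim from sequential boundedness is a valid spelling-out of what the paper calls obvious.
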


\begin{proof}
(i) $\Leftrightarrow$ (ii): See, for example, statement (3) on page 153 in \cite{Koethe} or statement 5.3 on page 26 in \cite{Schaefer}.

\noindent(i) $\Leftrightarrow$ (iii): The subset $\calb$ is topologically bounded if and only if for each neighborhood $U$ of zero there exists $\alpha \in (0,1)$ such that $\alpha \calb \subset U \cap V_+$. Using Lemma \ref{lemma-alpha-B} completes the proof.

\noindent The additional statement is obvious.
\end{proof}

Hence, in the situation of Proposition \ref{prop-bounded} for every neighborhood of zero the Minkowski functional $p_{\calb}$ considered on $V_+$ is bounded from below by a positive constant outside this neighborhood. In particular, it has no zeros on $V_+ \setminus \{ 0 \}$.

\begin{proposition}\label{prop-bounded-2}
Let $\calb \subset V_+$ be a semi-circled subset. Then the following statements are equivalent:
\begin{enumerate}
\item[(i)] We have $p_{\calb}(x) > 0$ for all $x \in V_+ \setminus \{ 0 \}$.

\item[(ii)] We have $\bigcap_{\alpha > 0} \alpha \calb = \{ 0 \}$.
\end{enumerate}
\end{proposition}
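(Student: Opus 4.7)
The plan is to prove both implications directly from the definition of the Minkowski functional, making essential use of the monotonicity property $\alpha \calb \subset \beta \calb$ for $0 \le \alpha \le \beta$ established in Lemma \ref{lemma-semi-balanced}(3), together with the fact that $0 \in \calb$.

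For the implication (i) $\Rightarrow$ (ii), I would take an arbitrary $x \in \bigcap_{\alpha > 0} \alpha \calb$. Since $\alpha \calb \subset V_+$ for every $\alpha > 0$ (as $V_+$ is a cone and $\calb \subset V_+$), we have $x \in V_+$. The membership $x \in \alpha \calb$ forces $p_{\calb}(x) \leq \alpha$ directly from the definition of $p_{\calb}$, so letting $\alpha \downarrow 0$ yields $p_{\calb}(x) = 0$. Assumption (i) then forces $x = 0$, giving $\bigcap_{\alpha > 0} \alpha \calb \subset \{0\}$. The reverse inclusion is automatic since $0 \in \calb$ implies $0 \in \alpha \calb$ for every $\alpha > 0$ by Lemma \ref{lemma-semi-balanced}(1).

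For the implication (ii) $\Rightarrow$ (i), I would argue by contrapositive. Suppose there exists $x \in V_+ \setminus \{0\}$ with $p_{\calb}(x) = 0$. Then for every $\alpha > 0$ there exists $\beta \in (0, \alpha)$ with $x \in \beta \calb$, and Lemma \ref{lemma-semi-balanced}(3) gives $\beta \calb \subset \alpha \calb$, hence $x \in \alpha \calb$ for all $\alpha > 0$. This places $x$ in $\bigcap_{\alpha > 0} \alpha \calb$ with $x \neq 0$, contradicting (ii).

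I do not expect any serious obstacle here; the only mild subtlety is making sure the inclusion $\beta \calb \subset \alpha \calb$ for $\beta \leq \alpha$ is invoked so that the infimum characterisation of $p_{\calb}$ transfers cleanly to membership in $\alpha \calb$ for every $\alpha > 0$, and this is precisely what semi-circledness guarantees through Lemma \ref{lemma-semi-balanced}.
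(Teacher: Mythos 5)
Your proposal is correct and follows essentially the same route as the paper: both directions rest on the definition of $p_{\calb}$ as an infimum together with the monotonicity $\beta \calb \subset \alpha \calb$ for $\beta \leq \alpha$ from Lemma \ref{lemma-semi-balanced}. The only cosmetic difference is that you phrase each implication as the contrapositive of the paper's version (arguing on elements of the intersection rather than on elements of $V_+ \setminus \{0\}$), which changes nothing of substance.
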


\begin{proof}
\noindent(i) $\Rightarrow$ (ii): Let $x \in V_+ \setminus \{ 0 \}$ be arbitrary. If $x \notin \calb$, then $x \notin \bigcap_{\alpha > 0} \alpha \calb$. Hence, we may assume that $x \in \calb \setminus \{ 0 \}$. Then we have $p_{\calb}(x) > 0$, and hence there exists $\alpha > 0$ with $\alpha < p_{\calb}(x)$. This gives us $x \notin \alpha {\calb}$, and in particular $x \notin \bigcap_{\alpha > 0} \alpha \calb$.

\noindent(ii) $\Rightarrow$ (i): Let $x \in V_+ \setminus \{ 0 \}$ be arbitrary. By assumption there is $\alpha > 0$ such that $x \notin \alpha \calb$. By Lemma \ref{lemma-semi-balanced} we deduce that $x \notin \beta \calb$ for all $\beta \in [0,\alpha]$. Hence $p_{\calb}(x) \geq \alpha > 0$.
\end{proof}

The following example shows that a convex, semi-solid subset $\calb \subset V_+$ with $\bigcap_{\alpha > 0} \alpha \calb = \{ 0 \}$ does not need to be topologically bounded.

\begin{example}\label{example-counter}
Let $V = \ell^2(\bbn)$ be the space of all square-integrable sequences, equipped with the Hilbert space topology induced by the norm
\begin{align}\label{norm}
\| x \| = \bigg( \sum_{k=1}^{\infty} |x_k|^2 \bigg)^{1/2}, \quad x \in V.
\end{align}
We agree to write $x \leq y$ if $x_k \leq y_k$ for all $k \in \bbn$. Then $(V,\leq)$ is a Banach lattice, and the positive cone is given by
\begin{align*}
V_+ = \{ x \in V : x_k \geq 0 \text{ for each } k \in \bbn \}.
\end{align*}
We define the sequence $(f_k)_{k \in \bbn}$ as $f_k := k \, e_k$, where $e_k$ denotes the $k$th unit vector. Furthermore, we define the subset $\calb \subset V_+$ as the convex hull
\begin{align*}
\calb := {\rm co} \big( \{ 0 \} \cup \{ f_k : k \in \bbn \} \big).
\end{align*}
Then $\calb$ is unbounded because $\| f_k \| \to \infty$ for $k \to \infty$, and $\calb$ consists of all linear combinations
\begin{align}\label{repr-counter}
x = \sum_{k=1}^n \lambda_k f_k
\end{align}
for some $n \in \bbn$, where $\lambda_k \geq 0$ for $k=1,\ldots,n$ and $\sum_{k=1}^n \lambda_k \leq 1$. From this representation we see that $\calb$ is semi-solid. For each $\alpha > 0$ the set $\alpha \calb$ consists of all $x \in V_+$ with representation (\ref{repr-counter}) such that $\lambda_k \geq 0$ for $k=1,\ldots,n$ and $\sum_{k=1}^n \lambda_k \leq \alpha$. Let $x \in \calb \setminus \{ 0 \}$ with representation (\ref{repr-counter}) be arbitrary. Since $x \neq 0$, we have $\lambda > 0$, where $\lambda := \sum_{k=1}^n \lambda_k$, and hence $x \notin \alpha \calb$ for each $\alpha \in (0,\lambda)$. Consequently, we have $\bigcap_{\alpha > 0} \alpha \calb = \{ 0 \}$.
\end{example}

However, surprisingly there are some examples of topological vector lattices $V$ where every convex, semi-solid subset $\calb \subset V_+$ with $\bigcap_{\alpha > 0} \alpha \calb = \{ 0 \}$ is topologically bounded. As we will see in Section \ref{sec-random-var} later on, this is in particular the case if $V = L^0$ is the space of all random variables defined on some probability space.

Recall that a subset $\calb \subset V_+$ is unbounded if and only if there exist sequences $(x_n)_{n \in \bbn} \subset \calb$ and $(\alpha_n)_{n \in \bbn} \subset (0,\infty)$ with $\alpha_n \downarrow 0$ such that $\alpha_n x_n \not\to 0$. In the upcoming definition, we make a stronger assumption for unbounded subsets, which are convex and semi-solid. 

\begin{definition}
The topological vector lattice $(V,\leq)$ admits nontrivial minimal elements for unbounded, convex and semi-solid subsets of $V_+$ if for each unbounded, convex and semi-solid subset $\calb \subset V_+$ there are $x \in \calb \setminus \{ 0 \}$, and sequences $(x_n)_{n \in \bbn} \subset \calb$ and $(\alpha_n)_{n \in \bbn} \subset \bbr_+$ with $\alpha_n \downarrow 0$ such that $x \leq \alpha_n x_n$ for each $n \in \bbn$.
\end{definition}

\begin{theorem}\label{thm-bounded}
Suppose that $(V,\leq)$ admits nontrivial minimal elements for unbounded, convex and semi-solid subsets of $V_+$. Then for every convex, semi-solid subset $\calb \subset V_+$ the following statements are equivalent:
\begin{enumerate}
\item[(i)] $\calb$ is topologically bounded.

\item[(ii)] $\calb$ is sequentially bounded.

\item[(iii)] We have $p_{\calb}(x) > 0$ for all $x \in V_+ \setminus \{ 0 \}$.

\item[(iv)] We have $\bigcap_{\alpha > 0} \alpha \calb = \{ 0 \}$.
\end{enumerate}
\end{theorem}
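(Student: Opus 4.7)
The plan is to observe that most of the equivalences are already supplied by the preceding propositions, and that the new assumption is only needed to close the last loop. Since $\calb \subset V_+$ is semi-solid, by the earlier lemma it is semi-circled, so Proposition \ref{prop-bounded} directly gives (i) $\Leftrightarrow$ (ii) together with the implication (i) $\Rightarrow$ (iii). Likewise, Proposition \ref{prop-bounded-2} yields (iii) $\Leftrightarrow$ (iv) for any semi-circled subset. Therefore the only implication that requires the hypothesis on nontrivial minimal elements is (iv) $\Rightarrow$ (i), and I would prove this by contraposition.

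So suppose (i) fails: $\calb$ is unbounded, convex and semi-solid. The assumption on $(V,\leq)$ then supplies an element $x \in \calb \setminus \{0\}$ and sequences $(x_n)_{n \in \bbn} \subset \calb$ and $(\alpha_n)_{n \in \bbn} \subset \bbr_+$ with $\alpha_n \downarrow 0$ such that $x \leq \alpha_n x_n$ for every $n \in \bbn$. The key step is now to translate this order bound into a statement about the scaled sets $\alpha \calb$. For $n$ sufficiently large we have $\alpha_n \in (0,1]$, and then $0 \leq x/\alpha_n \leq x_n$ with $x_n \in \calb$; since $\calb$ is semi-solid, this forces $x/\alpha_n \in \calb$, i.e.\ $x \in \alpha_n \calb$.

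Now fix an arbitrary $\alpha > 0$. Since $\alpha_n \downarrow 0$, there exists $n$ with $\alpha_n \leq \alpha$, and Lemma \ref{lemma-semi-balanced}(3) gives $\alpha_n \calb \subset \alpha \calb$, so $x \in \alpha \calb$. As $\alpha > 0$ was arbitrary, we conclude $x \in \bigcap_{\alpha > 0} \alpha \calb$. Because $x \neq 0$, this contradicts (iv). Hence (iv) $\Rightarrow$ (i), and combined with the above chain the four statements are equivalent.

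The only real obstacle is the passage from $x \leq \alpha_n x_n$ to $x \in \alpha_n \calb$; this is precisely where semi-solidity of $\calb$ (and not merely semi-circledness) is used, and it is why the hypothesis about nontrivial minimal elements is formulated for unbounded, convex, semi-solid subsets rather than just unbounded sets.
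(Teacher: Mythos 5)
Your proof is correct and follows essentially the same strategy as the paper: both reduce everything except one implication to Propositions \ref{prop-bounded} and \ref{prop-bounded-2}, and then close the cycle by contraposition from unboundedness using the nontrivial-minimal-element hypothesis. The paper closes the logically equivalent edge (iii) $\Rightarrow$ (ii) by computing $p_{\calb}(x) \leq p_{\calb}(\alpha_n x_n) = \alpha_n \, p_{\calb}(x_n) \to 0$ via Lemma \ref{lemma-Minkowski}, whereas you pass directly to $x \in \alpha_n \calb$ using semi-solidity and Lemma \ref{lemma-semi-balanced}; in view of Proposition \ref{prop-bounded-2} these are the same argument in different clothing.
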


\begin{proof}
By virtue of Propositions \ref{prop-bounded} and \ref{prop-bounded-2}, we only need to prove the implication (iii) $\Rightarrow$ (ii). Suppose that $\calb$ is not sequentially bounded. Then there exist sequences $(x_n)_{n \in \bbn} \subset \calb$ and $(\alpha_n)_{n \in \bbn} \subset (0,\infty)$ with $\alpha_n \downarrow 0$ and an element $x \in \calb \setminus \{ 0 \}$ such that $x \leq \alpha_n x_n$ for each $n \in \bbn$. By Lemma \ref{lemma-Minkowski} we have
\begin{align*}
p_{\calb}(x) \leq p_{\calb} ( \alpha_n x_n ) = \alpha_n \cdot p_{\calb}(x_n) \to 0 \quad \text{for $n \to \infty$,}
\end{align*}
and hence the contradiction $p_{\calb}(x) = 0$.
\end{proof}

\begin{proposition}\label{prop-locally-convex}
Suppose that the topological vector lattice $(V,\leq)$ is locally convex with a family $(\rho_i)_{i \in I}$ of seminorms satisfying the following two conditions:
\begin{enumerate}
\item For all $x,y \in V_+$ we have $x \leq y$ if and only if $\rho_i(x) \leq \rho_i(y)$ for all $i \in I$.

\item For each $f : I \to \bbr_+$ there exists $x \in V_+$ with $\rho_i(x) = f(i)$ for all $i \in I$.
\end{enumerate}
Then $(V,\leq)$ admits nontrivial minimal elements for unbounded, convex and semi-solid subsets of $V_+$.
\end{proposition}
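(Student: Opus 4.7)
The plan is to exploit condition (2) to synthesize a nontrivial minorant of a suitably rescaled sequence drawn from the unbounded set $\calb$. I begin with two structural observations about $\calb$. Since $\calb$ is semi-solid, any $x \in \calb$ witnesses $0 \leq 0 \leq x$, so $0 \in \calb$; together with convexity this upgrades $\calb$ to a semi-circled set, since for $x \in \calb$ and $\alpha \in [0,1]$ one has $\alpha x = \alpha x + (1-\alpha)\cdot 0 \in \calb$. In a locally convex space whose topology is generated by the seminorm family $(\rho_i)_{i \in I}$, topological boundedness of a set is equivalent to boundedness of each $\rho_i$ on that set; unboundedness of $\calb$ therefore yields an index $i_0 \in I$ and a sequence $(x_n)_{n \in \bbn} \subset \calb$ along which $\rho_{i_0}(x_n) \to \infty$. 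After passing to a subsequence I may assume $\rho_{i_0}(x_n) \geq 4^n$.

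The central construction is to set $\alpha_n := 2^{-n}$, so that $\alpha_n \downarrow 0$ and $\alpha_n \rho_{i_0}(x_n) \geq 2^n \geq 1$, and then to invoke condition (2) on the concrete profile $f \colon I \to \bbr_+$ given by $f(i_0) := 1$ and $f(i) := 0$ for $i \neq i_0$. This delivers an element $x \in V_+$ with $\rho_i(x) = f(i)$ for every $i \in I$. Since $\rho_{i_0}(x) = 1 \neq 0 = \rho_{i_0}(0)$, we have $x \neq 0$.

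To see $x \leq \alpha_n x_n$, observe that for every $i \in I$ and every $n \in \bbn$,
\begin{align*}
\rho_i(x) = f(i) \leq \alpha_n \rho_i(x_n) = \rho_i(\alpha_n x_n),
\end{align*}
trivially when $i \neq i_0$, and by the choice of $\alpha_n$ when $i = i_0$; since $x, \alpha_n x_n \in V_+$, condition (1) converts this into the desired order relation $x \leq \alpha_n x_n$. Finally, $\alpha_n x_n \in \calb$ by the semi-circled property established above, and semi-solidity of $\calb$ together with $0 \leq x \leq \alpha_n x_n$ places $x$ in $\calb \setminus \{0\}$. The only genuinely delicate point is the choice of profile $f$: setting $f(i) = 0$ off $i_0$ is indispensable because the values $\rho_i(x_n)$ for $i \neq i_0$ are uncontrolled, and condition (2) is precisely what allows such a degenerate profile to be realized by an actual element of $V_+$.
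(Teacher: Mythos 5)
Your proof is correct and follows essentially the same route as the paper's: both locate a single index $i_0$ at which the rescaled sequence stays seminorm-bounded away from zero, invoke condition (2) on a profile supported only at $i_0$, and use condition (1) to turn the seminorm inequalities into the order relation $x \leq \alpha_n x_n$. The only differences are cosmetic (you build $\alpha_n = 2^{-n}$ explicitly from the seminorm characterization of unboundedness, where the paper extracts an $\epsilon$ and a subsequence from $\alpha_n x_n \not\to 0$), and you are slightly more careful in justifying $x \in \calb$ via the semi-circled property.
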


\begin{proof}
Let $\calb \subset V_+$ be an unbounded, convex and semi-solid subset. Then there exist sequences $(x_n)_{n \in \bbn} \subset \calb$ and $(\alpha_n)_{n \in \bbn} \subset (0,\infty)$ with $\alpha_n \downarrow 0$ such that $\alpha_n x_n \not\to 0$. Hence, there exists $i \in I$ such that $\rho_i(\alpha_n x_n) \not\to 0$. Therefore, there exist $\epsilon > 0$ and a subsequence $(x_{n_k})_{k \in \bbn}$ such that $\rho_i(\alpha_{n_k} x_{n_k}) \geq \epsilon$ for each $k \in \bbn$. Let $f : I \to \bbr_+$ be the function given by $f(i) := \epsilon$ and $f(j) := 0$ for all $j \in I \setminus \{ i \}$. By assumption there exists $x \in V_+$ such that $\rho_j(x) = f(j)$ for all $j \in I$. This gives us $\rho_i(x) = \epsilon$ and $\rho_j(x) = 0$ for all $j \in I \setminus \{ i \}$. Therefore, we have $\rho_j(x) \leq \rho_j(\alpha_{n_k} x_{n_k})$ for all $k \in \bbn$ and all $j \in J$, and hence $x \leq \alpha_{n_k} x_{n_k}$ for all $k \in \bbn$. Note that $x \in \calb \setminus \{ 0 \}$ because $\rho_i(x) > 0$ and $\calb$ is semi-solid.
\end{proof}

\begin{remark}
According to Proposition \ref{prop-locally-convex} the following examples of topological vector lattices $(V,\leq)$ admit nontrivial minimal elements for unbounded, convex and semi-solid subsets of $V_+$, which means that Theorem \ref{thm-bounded} applies:
\begin{itemize}
\item The Euclidean space $V = \bbr^n$, equipped with the usual Euclidean topology.

\item The space $V = \ell^0(\bbn)$ of all sequences, equipped with the topology of pointwise convergence.

\item The space $V$ consisting of all mappings $f : D \to \bbr$ on some domain $D$, equipped with the topology of pointwise convergence.
\end{itemize}
As we will see later on, the space $V = L^0$ is also such an example; see Proposition \ref{prop-L0-ass-fulfilled} below.
\end{remark}

\section{No-arbitrage concepts in topological vector lattices}\label{sec-NA-tvs}

In this section we introduce no-arbitrage concepts in topological vector lattices. Let $(V,\leq)$ be a topological vector lattice. Furthermore, let $\calk_0 \subset V$ be a subset. We may think of outcomes of trading strategies with initial value zero. Throughout this section, we make the following assumption.

\begin{assumption}\label{ass-convex-0}
We assume that $\calk_0$ is a convex cone.
\end{assumption}

\begin{definition}
$\calk_0$ satisfies \emph{NA (No Arbitrage)} if $\calk_0 \cap V_+ = \{ 0 \}$.
\end{definition}

We define the subset $\calb_0 \subset V_+$ as
\begin{align*}
\calb_0 := (\calk_0 - V_+) \cap V_+.
\end{align*}
The following auxiliary result is obvious.

\begin{lemma}\label{lemma-NA-pre-2}
The following statements are equivalent:
\begin{enumerate}
\item[(i)] $\calk_0$ satisfies NA.

\item[(ii)] We have $(\calk_0 - V_+) \cap V_+ = \{ 0 \}$.

\item[(iii)] We have $\calb_0 = \{ 0 \}$.
\end{enumerate}
\end{lemma}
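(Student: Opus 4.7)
The plan is to show the equivalence by a circular chain (or by noting that (ii) and (iii) are literally identical by definition, and then establishing (i) $\Leftrightarrow$ (ii)). Since $\calb_0$ is defined as $(\calk_0 - V_+) \cap V_+$, the equivalence (ii) $\Leftrightarrow$ (iii) is immediate: both conditions assert that this same set equals $\{0\}$. So the substance of the lemma is the equivalence (i) $\Leftrightarrow$ (ii).

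For (i) $\Rightarrow$ (ii), I would take an arbitrary $x \in (\calk_0 - V_+) \cap V_+$ and write $x = k - v$ with $k \in \calk_0$ and $v \in V_+$. Since $V_+$ is closed under addition (as a convex cone), $k = x + v$ lies in $V_+$, so $k \in \calk_0 \cap V_+ = \{0\}$, yielding $k = 0$. Then $x = -v$ with both $x \geq 0$ and $v \geq 0$; by the ordered vector space axioms this forces $x = 0$ (adding $x$ to $-x = v \geq 0$ gives $0 \geq x$, and anti-symmetry with $x \geq 0$ gives $x = 0$). For (ii) $\Rightarrow$ (i), I would pick $k \in \calk_0 \cap V_+$, write $k = k - 0$ with $0 \in V_+$, so $k \in (\calk_0 - V_+) \cap V_+ = \{0\}$, hence $k = 0$.

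There is no real obstacle here: the only ingredients are the definition of $\calb_0$, the fact that $V_+$ is a convex cone (so closed under addition), and the anti-symmetry built into the partial order on an ordered vector space. This is why the author labels the statement ``obvious'' and offers no proof.
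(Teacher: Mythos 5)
Your proof is correct and supplies exactly the routine verification the paper omits by calling the lemma obvious: (ii) and (iii) coincide by the definition of $\calb_0$, and the two directions of (i) $\Leftrightarrow$ (ii) follow from $V_+$ being closed under addition together with anti-symmetry of the order. Nothing further is needed.
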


Let $U \subset V$ be an ideal which is dense in $V$. We define the convex cone $\calc \subset U$ as
\begin{align*}
\calc := ( \calk_0 - V_+ ) \cap U.
\end{align*}

\begin{lemma}\label{lemma-NA-with-U}
The following statements are equivalent:
\begin{enumerate}
\item[(i)] $\calk_0$ satisfies NA.

\item[(ii)] We have $(\calk_0 - V_+) \cap U_+ = \{ 0 \}$.

\item[(iii)] We have $\calc \cap U_+ = \{ 0 \}$
\end{enumerate}
\end{lemma}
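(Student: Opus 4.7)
The plan is to establish the equivalences by a cyclic chain, where the nontrivial direction requires the approximation result from Lemma \ref{lemma-B0-in-closure} while the remaining implications are set-theoretic.

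First I would observe that (ii) $\Leftrightarrow$ (iii) is a tautology: since $U_+ = V_+ \cap U \subset U$, we have
\[
\calc \cap U_+ = (\calk_0 - V_+) \cap U \cap U_+ = (\calk_0 - V_+) \cap U_+,
\]
so both sides are literally the same set.

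Next I would handle (i) $\Rightarrow$ (ii) directly. If $x \in (\calk_0 - V_+) \cap U_+$, then $x \in U_+ \subset V_+$, so $x \in (\calk_0 - V_+) \cap V_+ = \calb_0$, which is $\{0\}$ by Lemma \ref{lemma-NA-pre-2}. Hence $x = 0$.

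The main step is (ii) (equivalently (iii)) $\Rightarrow$ (i). Here I would invoke Lemma \ref{lemma-B0-in-closure}, which gives the inclusion $\calb_0 \subset \overline{\calc \cap U_+}$. Under the assumption (iii), the set $\calc \cap U_+$ equals $\{0\}$, and since $V$ is Hausdorff (being a topological vector lattice), the closure of $\{0\}$ is $\{0\}$. Therefore $\calb_0 \subset \{0\}$, i.e.\ $\calb_0 = \{0\}$, and another appeal to Lemma \ref{lemma-NA-pre-2} yields NA.

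The only place where real content enters is the implication (ii) $\Rightarrow$ (i), and that content is already packaged in Lemma \ref{lemma-B0-in-closure} (whose proof uses density of $U$ in $V$, continuity of the lattice operations, and semi-solidity of $\calb_0$ from Lemma \ref{lemma-B-conv-semi}). So the proof itself should be very short, essentially a three-line invocation of the preceding lemmas.
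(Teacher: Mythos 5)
Your proof is correct and follows the same route as the paper: the easy implications are set-theoretic, and the substantive direction (iii) $\Rightarrow$ (i) rests on Lemma \ref{lemma-B0-in-closure}, with your Hausdorff-closure remark being exactly the paper's observation that a net in $\calc \cap U_+ = \{0\}$ converging to $x \in \calb_0$ forces $x = 0$.
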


\begin{proof}
(i) $\Rightarrow$ (ii) $\Rightarrow$ (iii): Taking into account Lemma \ref{lemma-NA-pre-2}, these implications are obvious.

\noindent(iii) $\Rightarrow$ (i): Let $x \in \calb_0$ be arbitrary. By Lemma \ref{lemma-B0-in-closure} there is a net $(x_i)_{i \in I} \subset \calc \cap U_+$ such that $x_i \to x$. By assumption we have $x_i = 0$ for each $i \in I$, and hence $x = 0$.
\end{proof}

\begin{definition}
Let $\tau$ be a topology on $U$. We say that $\calk_0$ satisfies \emph{NFL$_{\tau}$ (No Free Lunch with respect to $\tau$)} if
\begin{align*}
\overline{\calc}^{\tau} \cap U_+ = \{ 0 \}.
\end{align*}
\end{definition}

\begin{proposition}\label{prop-NFL-tau}
Let $\tau_1$ and $\tau_2$ be two topologies on $U$ such that $\tau_1 \subset \tau_2$. If $\calk_0$ satisfies NFL$_{\tau_1}$, then it also satisfies NFL$_{\tau_2}$.
\end{proposition}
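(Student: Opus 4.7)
The plan is to reduce the statement to the elementary topological fact that a finer topology yields smaller closures. Specifically, since $\tau_1 \subset \tau_2$, every $\tau_1$-closed set is $\tau_2$-closed, and therefore for any subset $A \subset U$ one has $\overline{A}^{\tau_2} \subset \overline{A}^{\tau_1}$.

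Applying this with $A = \calc$ and intersecting with $U_+$ yields
\begin{align*}
\overline{\calc}^{\tau_2} \cap U_+ \subset \overline{\calc}^{\tau_1} \cap U_+ = \{ 0 \},
\end{align*}
where the last equality is the assumption that $\calk_0$ satisfies NFL$_{\tau_1}$. Since $0 \in \calc$ (as $\calc = (\calk_0 - V_+) \cap U$ and $\calk_0$ is a convex cone containing $0$ by Assumption \ref{ass-convex-0}, hence $0 \in \calk_0 - V_+$ and clearly $0 \in U$), the element $0$ belongs to $\overline{\calc}^{\tau_2} \cap U_+$, so in fact $\overline{\calc}^{\tau_2} \cap U_+ = \{ 0 \}$. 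This establishes NFL$_{\tau_2}$.

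There is no real obstacle here; the only thing to check carefully is the inclusion $\overline{\calc}^{\tau_2} \subset \overline{\calc}^{\tau_1}$, which is immediate from the definition of closure as the intersection of all closed supersets together with the fact that $\tau_1$-closed sets form a subfamily of $\tau_2$-closed sets when $\tau_1 \subset \tau_2$. The proof can therefore be kept to just a few lines.
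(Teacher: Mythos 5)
Your proof is correct and follows exactly the paper's argument: the inclusion $\tau_1 \subset \tau_2$ gives $\overline{\calc}^{\tau_2} \subset \overline{\calc}^{\tau_1}$, from which NFL$_{\tau_2}$ follows immediately. The extra remark that $0 \in \calc$ (so the intersection is exactly $\{0\}$ rather than possibly empty) is a harmless bit of added care that the paper leaves implicit.
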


\begin{proof}
By assumption we have $\overline{\calc}^{\tau_2} \subset \overline{\calc}^{\tau_1}$, hence the statement follows.
\end{proof}

Now, let $\tau$ be a topology on $U$.

\begin{proposition}\label{prop-NFL-NA}
If $\calk_0$ satisfies NFL$_\tau$, then $\calk_0$ also satisfies NA.
\end{proposition}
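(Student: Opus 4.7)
The plan is to reduce NA to the condition $\calc \cap U_+ = \{0\}$ via the equivalence already established in Lemma \ref{lemma-NA-with-U}, and then deduce this from NFL$_\tau$ using nothing more than the trivial inclusion $\calc \subset \overline{\calc}^{\tau}$.

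More concretely, first I would invoke Lemma \ref{lemma-NA-with-U}, according to which NA for $\calk_0$ is equivalent to
\begin{align*}
\calc \cap U_+ = \{ 0 \}.
\end{align*}
Next I would note that every subset of a topological space is contained in its closure, so $\calc \subset \overline{\calc}^{\tau}$, and hence
\begin{align*}
\calc \cap U_+ \subset \overline{\calc}^{\tau} \cap U_+.
\end{align*}
By the assumed NFL$_\tau$ condition the right-hand side equals $\{0\}$, so $\calc \cap U_+ = \{0\}$, which is NA by the equivalence quoted above.

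There is essentially no obstacle here; the statement is a straight monotonicity observation for closures, and the work has already been done in Lemma \ref{lemma-NA-with-U} (which is itself the nontrivial step, since it relies on Lemma \ref{lemma-B0-in-closure} to pass from $\calk_0 \cap V_+ = \{0\}$ to its analogue in the dense ideal $U$). Correspondingly, the write-up should be only a couple of lines, simply citing Lemma \ref{lemma-NA-with-U} and the inclusion $\calc \subset \overline{\calc}^{\tau}$.
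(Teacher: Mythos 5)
Your proof is correct and matches the paper's, which likewise derives the proposition as an immediate consequence of Lemma \ref{lemma-NA-with-U}; you have simply spelled out the trivial inclusion $\calc \cap U_+ \subset \overline{\calc}^{\tau} \cap U_+$ that the paper leaves implicit. You also correctly identify that the real work lives in Lemma \ref{lemma-NA-with-U} (via Lemma \ref{lemma-B0-in-closure}).
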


\begin{proof}
This is an immediate consequence of Lemma \ref{lemma-NA-with-U}.
\end{proof}

\begin{corollary}\label{cor-NFL-NA}
Suppose that $\calc$ is closed in $U$ with respect to $\tau$. Then the following statements are equivalent:
\begin{enumerate}
\item[(i)] $\calk_0$ satisfies NFL$_{\tau}$.

\item[(ii)] $\calk_0$ satisfies NA.
\end{enumerate}
\end{corollary}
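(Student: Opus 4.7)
The plan is to observe that the corollary is essentially an immediate consequence of the preceding lemma and proposition once the closedness hypothesis is applied. The direction (i) $\Rightarrow$ (ii) requires nothing new: it is exactly the content of Proposition \ref{prop-NFL-NA}, which I would invoke without modification.

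For the converse (ii) $\Rightarrow$ (i), I would argue as follows. By the standing hypothesis $\calc$ is $\tau$-closed in $U$, so $\overline{\calc}^{\tau} = \calc$. Thus the defining condition of NFL$_{\tau}$,
\begin{align*}
\overline{\calc}^{\tau} \cap U_+ = \{ 0 \},
\end{align*}
collapses to $\calc \cap U_+ = \{ 0 \}$. But by the equivalence (i) $\Leftrightarrow$ (iii) of Lemma \ref{lemma-NA-with-U}, this is precisely NA. Hence NA implies NFL$_{\tau}$.

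There is no genuine obstacle here; the only thing one has to be careful about is to cite the correct equivalence from Lemma \ref{lemma-NA-with-U} (namely that NA is characterized by $\calc \cap U_+ = \{ 0 \}$, not just by the formulation in $V$), and to note that the closedness assumption is used solely to replace $\overline{\calc}^{\tau}$ with $\calc$. The proof can therefore be written in two or three lines.
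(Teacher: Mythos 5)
Your proof is correct and follows exactly the route the paper intends: the paper's proof is simply ``immediate consequence of Lemma \ref{lemma-NA-with-U},'' and your argument spells out precisely that implication, using Proposition \ref{prop-NFL-NA} for (i) $\Rightarrow$ (ii) and the identity $\overline{\calc}^{\tau} = \calc$ together with the equivalence NA $\Leftrightarrow$ $\calc \cap U_+ = \{0\}$ for the converse. Nothing is missing.
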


\begin{proof}
This is an immediate consequence of Lemma \ref{lemma-NA-with-U}.
\end{proof}

\begin{corollary}\label{cor-NFL-NA-2}
Suppose that $\calk_0 - V_+$ is closed in $V$, and that $\sigma \cap U \subset \tau$, where $\sigma$ denotes the topology on $V$. Then the following statements are equivalent:
\begin{enumerate}
\item[(i)] $\calk_0$ satisfies NFL$_{\tau}$.

\item[(ii)] $\calk_0$ satisfies NA.
\end{enumerate}
\end{corollary}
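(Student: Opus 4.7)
The plan is to use Proposition \ref{prop-NFL-NA} for the implication (i) $\Rightarrow$ (ii), so the real work lies in establishing (ii) $\Rightarrow$ (i). Assume that $\calk_0$ satisfies NA. I would pick an arbitrary $x \in \overline{\calc}^{\tau} \cap U_+$ and show that $x = 0$; this is what NFL$_{\tau}$ requires by definition.

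Since $x$ lies in the $\tau$-closure of $\calc$, there is a net $(x_i)_{i \in I} \subset \calc = (\calk_0 - V_+) \cap U$ with $x_i \to x$ in $\tau$. The key observation is that, because $\tau$ is finer than the subspace topology $\sigma \cap U$ induced by $V$ on $U$, the convergence $x_i \to x$ also holds in $\sigma \cap U$, which means $x_i \to x$ in $V$ (with respect to $\sigma$). Each $x_i$ lies in $\calk_0 - V_+$, and by hypothesis $\calk_0 - V_+$ is $\sigma$-closed in $V$, so the limit satisfies $x \in \calk_0 - V_+$.

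On the other hand, $x \in U_+ = V_+ \cap U \subset V_+$, so
\begin{equation*}
x \in (\calk_0 - V_+) \cap V_+ = \calb_0.
\end{equation*}
Since $\calk_0$ satisfies NA, Lemma \ref{lemma-NA-pre-2} gives $\calb_0 = \{0\}$, hence $x = 0$. This proves NFL$_{\tau}$.

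The only subtle point is the interpretation of the inclusion $\sigma \cap U \subset \tau$: once one reads it as saying that $\tau$ is at least as fine as the subspace topology inherited from $V$, the argument is a direct transfer of $\tau$-limits into $\sigma$-limits in $V$, combined with the closedness hypothesis on $\calk_0 - V_+$. No further technical obstacles arise.
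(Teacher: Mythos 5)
Your proposal is correct and follows essentially the same route as the paper: the key step in both is transferring a $\tau$-convergent net in $\calc$ to a $\sigma$-convergent net in $V$ via $\sigma \cap U \subset \tau$ and then invoking the closedness of $\calk_0 - V_+$. The only cosmetic difference is that the paper packages this as ``$\calc$ is $\tau$-closed in $U$'' and then cites Corollary \ref{cor-NFL-NA}, whereas you verify $\overline{\calc}^{\tau} \cap U_+ = \{0\}$ directly and conclude via Lemma \ref{lemma-NA-pre-2}.
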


\begin{proof}
The convex cone $\calc$ is closed in $U$ with respect to $\tau$. Indeed, let $(x_i)_{i \in I} \subset \calc$ be a net and $x \in U$ be an element such that $x_i \overset{\tau}{\to} x$. Since $\sigma \cap U \subset \tau$, we also have $x_i \overset{\sigma}{\to} x$. Since $\calk_0 - V_+$ is closed in $V$, we deduce that $x \in \calk_0 - V_+$. Consequently, the statement follows from Corollary \ref{cor-NFL-NA}.
\end{proof}

Now, let $(\calk_{\alpha})_{\alpha > 0}$ be a family of subsets of $V$. We may think of outcomes of trading strategies with initial value $\alpha$. Throughout this section we make the following assumption.

\begin{assumption}\label{ass-convex-1}
We assume that
\begin{align}\label{K-1-cone}
a x + b y \in \calk_{a \alpha + b \beta}
\end{align}
for all $a,b \in \bbr_+$, $\alpha,\beta > 0$ with $a \alpha + b \beta > 0$ and $x \in \calk_{\alpha}$, $y \in \calk_{\beta}$.
\end{assumption}

Then for each $\alpha > 0$ the set $\calk_{\alpha}$ is convex, and the union
\begin{align*}
\calk_{> 0} := \bigg( \bigcup_{\alpha > 0} \calk_{\alpha} \bigg) \cup \{ 0 \} 
\end{align*}
is a convex cone. We define the family $(\calb_{\alpha})_{\alpha > 0}$ of subsets of $V_+$ as
\begin{align*}
\calb_{\alpha} := (\calk_{\alpha} - V_+) \cap V_+, \quad \alpha > 0.
\end{align*}
We may think of all nonnegative elements which are equal to or below the outcome of a trading strategy with initial value $\alpha$. By Lemma \ref{lemma-B-conv-semi} for each $\alpha > 0$ the set $\calb_{\alpha}$ is convex and semi-solid. We set $\calb := \calb_1$.

\begin{lemma}\label{lemma-B-alpha-prod}
We have $\calb_{\alpha} = \alpha \calb$ for each $\alpha > 0$.
\end{lemma}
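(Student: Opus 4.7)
The plan is to derive the claim by a short direct computation, using only Assumption \ref{ass-convex-1} together with the fact that $V_+$ is a cone. The key intermediate step is the scaling identity $\calk_\alpha = \alpha \calk_1$ for every $\alpha > 0$.

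First I would establish this scaling identity. Taking $b = 0$, $a > 0$ and $\beta$ irrelevant in Assumption \ref{ass-convex-1} (or more directly, taking $y = x$, $b = 0$, $a > 0$), one obtains $a x \in \calk_{a\alpha}$ whenever $x \in \calk_\alpha$, which gives the inclusion $a \calk_\alpha \subset \calk_{a\alpha}$. Applying the same inclusion with $a$ replaced by $1/a$ and $\alpha$ by $a \alpha$ yields $\frac{1}{a} \calk_{a\alpha} \subset \calk_\alpha$, that is $\calk_{a\alpha} \subset a \calk_\alpha$. Hence $\calk_{a\alpha} = a \calk_\alpha$. Specializing to $a = \alpha$, $\alpha \mapsto 1$, I get $\calk_\alpha = \alpha \calk_1$.

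Next, since $V_+$ is a cone, $\alpha V_+ = V_+$ for each $\alpha > 0$. Combining these two facts gives
\begin{align*}
\calb_\alpha = (\calk_\alpha - V_+) \cap V_+ = (\alpha \calk_1 - \alpha V_+) \cap \alpha V_+ = \alpha \bigl( (\calk_1 - V_+) \cap V_+ \bigr) = \alpha \calb,
\end{align*}
where the third equality uses that scaling by a positive factor commutes with Minkowski differences and intersections.

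There is no real obstacle here; the only thing to check carefully is that Assumption \ref{ass-convex-1} really delivers both inclusions $a \calk_\alpha \subset \calk_{a\alpha}$ and $\calk_{a\alpha} \subset a \calk_\alpha$ for every $a, \alpha > 0$, which it does by the reciprocal-scaling trick above. Once that is in place, the proof reduces to the one-line set-theoretic computation.
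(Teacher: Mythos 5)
Your proof is correct and follows essentially the same route as the paper: both directions rest on the scaling consequence $a\,\calk_{\alpha} \subset \calk_{a\alpha}$ of Assumption \ref{ass-convex-1} applied with $a=\alpha$ and $a=\alpha^{-1}$, which the paper carries out element by element while you package it as the identity $\calk_{\alpha}=\alpha\calk_1$ followed by set algebra. The reciprocal-scaling step and the use of $\alpha V_+ = V_+$ are exactly the points the paper's proof also relies on, so nothing is missing.
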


\begin{proof}
Let $\alpha > 0$ be arbitrary. Furthermore, let $x \in \calb$ be arbitrary. Then we have $x \in V_+$ and $x \leq y$ for some $y \in \calk_1$. Note that $\alpha x \in V_+$ and $\alpha x \leq \alpha y$. Moreover, by (\ref{K-1-cone}) we have $\alpha y \in \calk_{\alpha}$. Therefore, we have $\alpha x \in \calb_{\alpha}$, showing that $\alpha \calb \subset \calb_{\alpha}$.

Now, let $x \in \calb_{\alpha}$ be arbitrary. Then we have $x \in V_+$ and $x \leq y$ for some $x \in \calk_{\alpha}$. Note that $\alpha^{-1} x \in V_+$ and $\alpha^{-1} x \leq \alpha^{-1} y$. Moreover, by (\ref{K-1-cone}) we have $\alpha^{-1} y \in \calk_1$. Therefore, we have $\alpha^{-1} x \in \calb$, and hence $x \in \alpha \calb$, showing that $\calb_{\alpha} \subset \alpha \calb$.
\end{proof}

Consequently, it suffices to concentrate on all outcomes of trading strategies with initial wealth one rather than focusing on all outcomes of trading strategies with positive initial wealth, and for our upcoming no-arbitrage concepts it is enough to focus on the convex subset $\calb$.

\begin{definition}\label{def-NA-1-concepts}
We introduce the following concepts:
\begin{enumerate}
\item $\calk_1$ satisfies \emph{NUPBR (No Unbounded Profit with Bounded Risk)} if $\calb$ is topologically bounded.

\item $\calk_1$ satisfies \emph{NAA$_1$ (No Asymptotic Arbitrage of the 1st Kind)} if $\calb$ is sequentially bounded.

\item $\calk_1$ satisfies \emph{NA$_1$ (No Arbitrage of the 1st Kind)} if $p_{\calb}(x) > 0$ for all $x \in V_+ \setminus \{ 0 \}$.
\end{enumerate}
\end{definition}

\begin{remark}
By Lemma \ref{lemma-B-alpha-prod} the following statements are equivalent:
\begin{enumerate}
\item[(i)] $\calk_1$ satisfies NUPBR.

\item[(ii)] $\calb_{\alpha}$ is topologically bounded for all $\alpha > 0$.

\item[(iii)] $\calb_{\alpha}$ is topologically bounded for some $\alpha > 0$.
\end{enumerate}
\end{remark}

\begin{remark}\label{rem-NAA-1}
By Lemma \ref{lemma-bounded-pos-cone} the subset $\calk_1$ satisfies NAA$_1$ if and only if for each sequence $(\alpha_n)_{n \in \bbn} \subset (0,\infty)$ with $\alpha_n \downarrow 0$ and every sequence $(x_n)_{n \in \bbn} \subset V_+$ with $x_n \in \calb_{\alpha_n}$ for each $n \in \bbn$ we have $x_n \to 0$.
\end{remark}

\begin{remark}\label{rem-NA-1}
By virtue of Lemma \ref{lemma-B-alpha-prod}, the Minkowski functional $p_{\calb} : V \to [0,\infty]$ can be written as
\begin{align}\label{Minkowski}
p_{\calb}(x) = \inf \{ \alpha > 0 : x \in \calb_{\alpha} \}, \quad x \in V.
\end{align}
Hence $p_{\calb}(x)$ has the interpretation of the minimal \emph{superreplication price} of $x$. Thus, $\calk_1$ satisfies NA$_1$ if and only if the superreplication price $p_{\calb}(x)$ is strictly positive for every strictly positive element $x \in V_+ \setminus \{ 0 \}$.
\end{remark}

\begin{proposition}\label{prop-NUPBR-NAA1}
The following statements are equivalent:
\begin{enumerate}
\item[(i)] $\calk_1$ satisfies NUPBR.

\item[(ii)] $\calk_1$ satisfies NAA$_1$.

\item[(iii)] For each neighborhood $U$ of zero there exists $\alpha \in (0,1)$ such that
\begin{align*}
\{ p_{\calb} \leq \alpha \} \cap V_+ \subset U \cap V_+.
\end{align*}
\end{enumerate}
If the previous conditions are fulfilled, then $\calk_1$ satisfies NA$_1$.
\end{proposition}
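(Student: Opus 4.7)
The plan is essentially to translate the three no-arbitrage conditions into statements about the set $\calb$ and then invoke Proposition \ref{prop-bounded}. First I would note that $\calb = (\calk_1 - V_+) \cap V_+$ is, by Lemma \ref{lemma-B-conv-semi}, a convex and semi-solid subset of $V_+$; and by the lemma stating that semi-solid subsets of $V_+$ are semi-circled, $\calb$ is in particular semi-circled. This is exactly the hypothesis needed to apply Proposition \ref{prop-bounded}.

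Next, I would simply read off the equivalences from Definition \ref{def-NA-1-concepts}. Condition (i) of the proposition (NUPBR for $\calk_1$) is by definition topological boundedness of $\calb$, and condition (ii) (NAA$_1$ for $\calk_1$) is by definition sequential boundedness of $\calb$. Thus the equivalences (i) $\Leftrightarrow$ (ii) $\Leftrightarrow$ (iii) are exactly the content of Proposition \ref{prop-bounded} applied to the semi-circled set $\calb$.

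Finally, for the implication that any of these conditions yields NA$_1$, I would use the additional statement in Proposition \ref{prop-bounded}: under any of the equivalent conditions, $p_\calb(x) > 0$ for all $x \in V_+ \setminus \{0\}$, which is precisely the definition of NA$_1$ for $\calk_1$.

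There is no real obstacle here — the entire proof is a matter of matching definitions and quoting Proposition \ref{prop-bounded}, so the write-up should be a short paragraph that identifies $\calb$ as semi-circled and then refers directly to that proposition for both the equivalences and the final implication.
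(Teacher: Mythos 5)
Your proposal is correct and is exactly the paper's argument: the paper also proves this proposition by observing that $\calb$ is convex, semi-solid (hence semi-circled) and then quoting Proposition \ref{prop-bounded}, with the final NA$_1$ implication coming from that proposition's additional statement. Nothing to add.
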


\begin{proof}
This is a direct consequence of Proposition \ref{prop-bounded}.
\end{proof}

\begin{theorem}\label{thm-NA-concepts}
Suppose that $(V,\leq)$ admits nontrivial minimal elements for unbounded, convex and semi-solid subsets of $V_+$. Then the following statements are equivalent:
\begin{enumerate}
\item[(i)] $\calk_1$ satisfies NUPBR.

\item[(ii)] $\calk_1$ satisfies NAA$_1$.

\item[(iii)] $\calk_1$ satisfies NA$_1$.

\item[(iv)] We have $\bigcap_{\alpha > 0} \calb_{\alpha} = \{ 0 \}$.
\end{enumerate}
\end{theorem}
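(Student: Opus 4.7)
The plan is to reduce the theorem to a direct application of Theorem \ref{thm-bounded}, which already establishes the analogous equivalences for an abstract convex, semi-solid subset $\calb \subset V_+$ under the minimal-elements hypothesis on $(V,\leq)$. To do this, I first need to verify that the concrete subset $\calb = \calb_1$ associated to the financial market fits into the framework of Theorem \ref{thm-bounded}.

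First, I would observe that by Assumption \ref{ass-convex-1} (applied with $a = \lambda$, $b = 1-\lambda$, $\alpha = \beta = 1$), the set $\calk_1$ is convex. Hence Lemma \ref{lemma-B-conv-semi} applied to $\calk := \calk_1$ shows that $\calb = (\calk_1 - V_+) \cap V_+$ is convex and semi-solid. This is the only structural input required, and it lets Theorem \ref{thm-bounded} be invoked for $\calb$.

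Next, I would translate the no-arbitrage conditions into statements about $\calb$. By Definition \ref{def-NA-1-concepts}, NUPBR, NAA$_1$ and NA$_1$ correspond respectively to topological boundedness of $\calb$, sequential boundedness of $\calb$, and strict positivity of $p_{\calb}$ on $V_+ \setminus \{0\}$. Thus conditions (i), (ii), (iii) of the present theorem coincide verbatim with the first three conditions of Theorem \ref{thm-bounded} applied to $\calb$. For condition (iv), Lemma \ref{lemma-B-alpha-prod} yields $\calb_{\alpha} = \alpha \calb$ for every $\alpha > 0$, so
\begin{align*}
\bigcap_{\alpha > 0} \calb_{\alpha} = \bigcap_{\alpha > 0} \alpha \calb,
\end{align*}
which identifies (iv) with the fourth condition of Theorem \ref{thm-bounded}.

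With these identifications in place, all four equivalences follow at once from Theorem \ref{thm-bounded}. There is no real obstacle here since the substantive work, namely the implication from the Minkowski-functional condition to sequential boundedness under the minimal-elements hypothesis, has already been carried out in the proof of Theorem \ref{thm-bounded}; the present statement is essentially its specialization to the convex, semi-solid set arising from the family $(\calk_{\alpha})_{\alpha > 0}$ of trading outcomes.
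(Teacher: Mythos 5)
Your proof is correct and follows exactly the paper's route: the paper proves this theorem by citing Theorem \ref{thm-bounded}, relying on the facts (established just before the theorem) that $\calb$ is convex and semi-solid by Lemma \ref{lemma-B-conv-semi} and that $\calb_\alpha = \alpha\calb$ by Lemma \ref{lemma-B-alpha-prod}. You have merely made these identifications explicit, which is a faithful and complete rendering of the same argument.
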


\begin{proof}
This is a consequence of Theorem \ref{thm-bounded}.
\end{proof}

Now, we consider $\calk_0$ and $(\calk_{\alpha})_{\alpha > 0}$ together. The following remark provides a sufficient condition ensuring that Assumptions \ref{ass-convex-0} and \ref{ass-convex-1} are fulfilled.

\begin{remark}
Suppose that 
\begin{align}\label{K-cone-together}
a x + b y \in \calk_{a \alpha + b \beta}
\end{align}
for all $a,b \in \bbr_+$, $\alpha,\beta \in \bbr_+$ and $x \in \calk_{\alpha}$, $y \in \calk_{\beta}$. Then $\calk_0$ is a convex cone, and we have (\ref{K-cone-together}) for all $a,b \in \bbr_+$, $\alpha,\beta > 0$ with $a \alpha + b \beta > 0$ and $x \in \calk_{\alpha}$, $y \in \calk_{\beta}$.
\end{remark}

\begin{proposition}\label{prop-NA1-NA-pre}
The following statements are true:
\begin{enumerate}
\item We have $\bigcap_{\alpha > 0} \calb_{\alpha} = \{ x \in V_+ : p_{\calb}(x) = 0 \}$.

\item Suppose that $\calb_0 \subset \bigcap_{\alpha > 0} \calb_{\alpha}$. If $\calk_1$ satisfies NA$_1$, then $\calk_0$ satisfies NA.

\item Suppose that $\calb_0 = \bigcap_{\alpha > 0} \calb_{\alpha}$. Then $\calk_1$ satisfies NA$_1$ if and only if $\calk_0$ satisfies NA.
\end{enumerate}
\end{proposition}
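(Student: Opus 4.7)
The plan is to reduce everything to the identity $\calb_\alpha = \alpha \calb$ established in Lemma \ref{lemma-B-alpha-prod}, combined with basic properties of the Minkowski functional on semi-circled sets. Statements (2) and (3) will follow almost immediately from statement (1), so the bulk of the work is statement (1).

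For statement (1), I would first observe that by Lemma \ref{lemma-B-conv-semi} the set $\calb$ is semi-solid, hence semi-circled. Using $\calb_\alpha = \alpha \calb$, the claim to be proved becomes
\[
\bigcap_{\alpha > 0} \alpha \calb \;=\; \{ x \in V_+ : p_\calb(x) = 0 \}.
\]
The inclusion ``$\subseteq$'' is immediate: each $\alpha \calb$ lies in $V_+$, and membership of $x$ in $\alpha \calb$ for \emph{every} $\alpha > 0$ forces $p_\calb(x) \leq \alpha$ for every $\alpha > 0$ and hence $p_\calb(x) = 0$. For the reverse inclusion, given $x \in V_+$ with $p_\calb(x) = 0$ and any $\alpha > 0$, the infimum produces some $\beta \in (0, \alpha]$ with $x \in \beta \calb$; Lemma \ref{lemma-semi-balanced}(3) then gives $\beta \calb \subseteq \alpha \calb$, so $x \in \alpha \calb$.

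Statement (2) will then be a one-line chase: any $x \in \calb_0$ lies in $\bigcap_{\alpha > 0} \calb_\alpha$ by hypothesis, so statement (1) forces $x \in V_+$ and $p_\calb(x) = 0$; NA$_1$ then yields $x = 0$, i.e.\ $\calb_0 = \{ 0 \}$, which is NA by Lemma \ref{lemma-NA-pre-2}. For statement (3), the forward direction is statement (2), while the converse is equally direct: NA gives $\calb_0 = \{ 0 \}$ by Lemma \ref{lemma-NA-pre-2}, the hypothesis $\calb_0 = \bigcap_{\alpha > 0} \calb_\alpha$ upgrades this to $\bigcap_{\alpha > 0} \calb_\alpha = \{ 0 \}$, and Proposition \ref{prop-bounded-2} applied to the semi-circled set $\calb$ converts this to NA$_1$.

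No real obstacle is anticipated: the entire proposition is essentially bookkeeping once $\calb_\alpha = \alpha \calb$ is available. The only point requiring a moment of care is the reverse inclusion in statement (1), where it is the semi-circled property of $\calb$ that lets the vanishing of the infimum translate into genuine membership in every $\alpha \calb$.
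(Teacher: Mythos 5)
Your proof is correct and follows essentially the same route as the paper: statement (1) via the identity $\calb_{\alpha} = \alpha\calb$ and the resulting representation of $p_{\calb}$, and statements (2) and (3) via Proposition \ref{prop-bounded-2} and Lemma \ref{lemma-NA-pre-2}. You simply spell out the details that the paper leaves implicit.
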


\begin{proof}
The first statement is a consequence of the representation (\ref{Minkowski}) of the Minkowski functional $p_{\calb}$. The second and the third statement are a consequence of Proposition \ref{prop-bounded-2} and Lemma \ref{lemma-NA-pre-2}.
\end{proof}

\begin{proposition}\label{prop-NFL-NA-1}
Let $\tau$ be a topology on $U$ such that
\begin{align*}
\bigg( \bigcap_{\alpha > 0} \calb_{\alpha} \bigg) \cap U \subset \overline{\calc}^{\tau}.
\end{align*}
If $\calk_0$ satisfies NFL$_{\tau}$, then $\calk_1$ satisfies NA$_1$.
\end{proposition}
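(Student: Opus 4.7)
The plan is to show that NA$_1$ is equivalent to $\bigcap_{\alpha > 0} \calb_\alpha = \{0\}$, and then establish this latter equality using NFL$_\tau$ together with a density/truncation argument in the spirit of Lemma \ref{lemma-B0-in-closure}. Concretely, Lemma \ref{lemma-B-alpha-prod} identifies $\calb_\alpha$ with $\alpha \calb$, so Proposition \ref{prop-bounded-2} tells us that NA$_1$ (i.e.\ $p_\calb(x) > 0$ on $V_+ \setminus \{0\}$) is the same as $\bigcap_{\alpha > 0} \calb_\alpha = \{0\}$.

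So let $x \in \bigcap_{\alpha > 0} \calb_\alpha$; the goal is $x = 0$. Note $x \in V_+$. Since $U$ is dense in $V$, pick a net $(x_i)_{i \in I} \subset U$ with $x_i \to x$. Continuity of the lattice operations together with $x \in V_+$ yields $x_i^+ \wedge x \to x^+ \wedge x = x$. Each approximant $y_i := x_i^+ \wedge x$ lies in $U$ because $U$ is an ideal and $|y_i| = y_i \leq x_i^+ = |x_i^+|$ with $x_i^+ \in U$; moreover $y_i \in V_+$, so $y_i \in U_+$.

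The key observation is that $y_i$ inherits membership in every $\calb_\alpha$: indeed $0 \leq y_i \leq x$ and $x \in \calb_\alpha$, so semi-solidity of $\calb_\alpha$ (guaranteed by Lemma \ref{lemma-B-conv-semi}) gives $y_i \in \calb_\alpha$ for every $\alpha > 0$. Hence $y_i \in \bigl(\bigcap_{\alpha > 0} \calb_\alpha\bigr) \cap U$, and the hypothesis of the proposition places $y_i \in \overline{\calc}^\tau$. Combined with $y_i \in U_+$ and the NFL$_\tau$ assumption $\overline{\calc}^\tau \cap U_+ = \{0\}$, we conclude $y_i = 0$ for every $i$. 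Since the topology on $V$ is Hausdorff, passing to the limit $y_i \to x$ forces $x = 0$.

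The main (and really only) obstacle is the mismatch between the ambient spaces: NA$_1$ is a statement in $V$, while NFL$_\tau$ is a statement in $U$. The approximation trick $y_i := x_i^+ \wedge x$ resolves this — it produces elements of $U_+$ that (by semi-solidity) remain in the whole intersection $\bigcap_{\alpha > 0} \calb_\alpha$, allowing the hypothesis to push them into $\overline{\calc}^\tau$ where NFL$_\tau$ can be applied. Everything else is a direct combination of Proposition \ref{prop-bounded-2}, Lemma \ref{lemma-B-alpha-prod}, and Lemma \ref{lemma-B-conv-semi}.
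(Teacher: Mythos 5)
Your proof is correct and follows essentially the same route as the paper: reduce NA$_1$ to $\bigcap_{\alpha>0}\calb_{\alpha}=\{0\}$ via Proposition \ref{prop-bounded-2} and Lemma \ref{lemma-B-alpha-prod}, then kill any $x$ in the intersection by approximating with $x_i^+\wedge x\in U_+$ and invoking semi-solidity together with NFL$_{\tau}$. The only cosmetic difference is that you spell out a few steps (membership of $x_i^+\wedge x$ in $U$, the Hausdorff limit argument) that the paper leaves implicit.
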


\begin{proof}
By assumption we have
\begin{align*}
\bigg( \bigcap_{\alpha > 0} \calb_{\alpha} \bigg) \cap U_+ = \{ 0 \}.
\end{align*}
Let $x \in \bigcap_{\alpha > 0} \calb_{\alpha}$ be arbitrary. Since $U$ is dense in $V$, there exists a net $(x_i)_{i \in I} \subset U$ such that $x_i \to x$. Since the lattice operations are continuous, we obtain $x_i^+ \wedge x \to x$. Let $i \in I$ be arbitrary. Then we have $0 \leq x_i^+ \wedge x \leq x$. Since $\bigcap_{\alpha > 0} \calb_{\alpha}$ is semi-solid, we have $x_i^+ \wedge x \in \bigcap_{\alpha > 0} \calb_{\alpha}$. Furthermore, we have $0 \leq x_i^+ \wedge x \leq x_i^+$ and $x_i^+ \in U_+$. Since $U_+$ is a semi-solid subset of $V$, we deduce $x_i^+ \wedge x \in U_+$. Consequently, we have $x=0$.
\end{proof}

\section{Versions of the abstract fundamental theorem of asset pricing on locally convex spaces}\label{sec-abstract}

In this section we present versions of the abstract FTAP on locally convex spaces, and later on an abstract FTAP on spaces of continuous functions on compact sets.

As in Section \ref{sec-NA-tvs}, let $(V,\leq)$ be a topological vector lattice, and let $\calk_0 \subset L^0$ be a subset such that Assumption \ref{ass-convex-0} is fulfilled; that is, $\calk_0$ is a convex cone. As already mentioned, we may think of the outcomes of trading strategies with initial value zero. Furthermore, let $U \subset V$ be an ideal which is dense in $V$, and let $\tau$ be a topology on $U$. We assume that the topological vector lattice $(U,\leq)$ is locally convex. Recall that the convex cone $\calc \subset U$ is defined as
\begin{align*}
\calc := (\calk_0 - V_+) \cap U,
\end{align*}
and that NFL$_{\tau}$ means $\overline{\calc}^{\tau} \cap U_+ = \{ 0 \}$.

We denote by $U'$ the space of all continuous linear functionals with respect to $\tau$. A functional $x' \in U'$ is called \emph{positive} if $x'(U_+) \subset \bbr_+$. We denote by $U_+'$ the set of all positive linear functionals. Note that $U_+'$ is a convex cone in $U'$. Furthermore, we denote by $U_{++}'$ the set of all positive functionals $x' \in U_+'$ such that $x'(x) > 0$ for all $x \in U_+ \setminus \{ 0 \}$.

\begin{definition}
A positive functional $x' \in U_+'$ is called \emph{separating} for $\calc$ if $x'(y) \leq 0$ for all $y \in \calc$.
\end{definition}

\begin{definition}\label{def-strictly-sep-fct}
A functional $x' \in U_+'$ which is separating for $\calc$ is called \emph{strictly separating} for $\calc$ if $x' \in U_{++}'$.
\end{definition}

Let $x' \in U_+'$ be a functional which is separating for $\calc$. Then we have
\begin{align*}
x'(y) \leq 0 \leq x'(z) \quad \text{for all $y \in \calc$ and $z \in U_+$,}
\end{align*}
showing that $x'$ separates the sets $\calc$ and $U_+$. If $x'$ is even strictly separating for $\calc$, then we have
\begin{align*}
x'(y) \leq 0 < x'(z) \quad \text{for all $y \in \calc$ and $z \in U_+ \setminus \{ 0 \}$.}
\end{align*}

\begin{lemma}\label{lemma-KY}
Let $\calc \subset U$ be a closed convex cone such that
\begin{align}\label{KY-prop}
-U_+ \subset \calc \quad \text{and} \quad \calc \cap U_+ = \{ 0 \}.
\end{align}
Then for each $x \in U_+ \setminus \{ 0 \}$ there exists a separating functional $x' \in U_+'$ for $\calc$ such that $x'(x) > 0$.
\end{lemma}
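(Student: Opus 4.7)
The plan is to apply the geometric Hahn--Banach separation theorem in the locally convex space $(U,\tau)$ to separate the singleton $\{x\}$ from the closed convex cone $\calc$, and then to exploit the cone structure of $\calc$ and the hypothesis $-U_+\subset\calc$ to upgrade the resulting continuous linear functional to a separating positive functional.

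First I would fix $x\in U_+\setminus\{0\}$. By the hypothesis $\calc\cap U_+=\{0\}$ we have $x\notin\calc$, and since $\calc$ is closed in $(U,\tau)$ and $\{x\}$ is compact and convex, the standard strict separation theorem in locally convex spaces (Hahn--Banach, second geometric form) provides a functional $x'\in U'$ and a scalar $\alpha\in\bbr$ with
\begin{equation*}
x'(y)<\alpha<x'(x)\qquad\text{for every }y\in\calc.
\end{equation*}
Since $\calc$ is a cone it contains $0$, so evaluating at $y=0$ gives $\alpha>0$, and in particular $x'(x)>\alpha>0$.

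Next I would turn this into a \emph{separating} functional in the sense of the definition. Because $\calc$ is a cone, for any $y\in\calc$ and any $\lambda\geq 0$ we have $\lambda y\in\calc$, hence $\lambda\, x'(y)<\alpha$ for all $\lambda\geq 0$. Letting $\lambda\to\infty$ forces $x'(y)\leq 0$, so $x'$ is separating for $\calc$ in the sense of the earlier definition. Finally, positivity of $x'$ follows from the hypothesis $-U_+\subset\calc$: for every $z\in U_+$ we have $-z\in\calc$, whence $-x'(z)=x'(-z)\leq 0$, i.e.\ $x'(z)\geq 0$. Thus $x'\in U_+'$, $x'$ is separating for $\calc$, and $x'(x)>0$, which is exactly the desired conclusion.

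The only genuine subtlety is invoking the right version of Hahn--Banach: we need the strict separation of a point from a closed convex set, which is available because $(U,\tau)$ is assumed locally convex (standing hypothesis of the section), the point $\{x\}$ is compact, and $\calc$ is closed and convex. Everything else is a direct exploitation of the cone property of $\calc$ (to pass from strict inequality to $x'(y)\leq 0$) and of $-U_+\subset\calc$ (to obtain positivity of $x'$), so no further technical obstacle is expected.
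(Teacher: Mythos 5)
Your proof is correct and follows essentially the same route as the paper: the paper invokes \cite[Cor.~5.84]{Aliprantis-Border} to separate the point $x$ from the closed convex cone $\calc$ and obtain $x'(x)>0$ with $x'\leq 0$ on $\calc$, and then derives positivity of $x'$ from $-U_+\subset\calc$ exactly as you do. The only difference is that you unpack that cited corollary into its standard proof (strict Hahn--Banach separation of a compact convex singleton from a closed convex set, followed by the scaling argument on the cone), which is a perfectly valid substitute.
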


\begin{proof}
Let $x \in U_+ \setminus \{ 0 \}$ be arbitrary. By (\ref{KY-prop}) we have $x \notin \calc$. Hence, by \cite[Cor. 5.84]{Aliprantis-Border} there exists a continuous linear functional $x' \in U'$ such that $x'(x) > 0$ and $x'(y) \leq 0$ for all $y \in \calc$. Let $z \in U_+$ be arbitrary. By (\ref{KY-prop}) we have $-z \in \calc$, and hence $x'(z) \geq 0$.
\end{proof}

\begin{theorem}[Abstract FTAP on locally convex spaces]\label{thm-FTAP-abstract-1}
The following statements are equivalent:
\begin{enumerate}
\item[(i)] $\calk_0$ satisfies NFL$_{\tau}$.

\item[(ii)] For each $x \in U_+ \setminus \{ 0 \}$ there exists a separating functional $x' \in U_+'$ for $\calc$ such that $x'(x) > 0$.
\end{enumerate}
\end{theorem}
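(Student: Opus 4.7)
The plan is to establish the two implications via a reduction to the Kreps--Yan type separation result \ref{lemma-KY} applied to the $\tau$-closure $\overline{\calc}^\tau$.

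For the implication (i) $\Rightarrow$ (ii), I would first verify that $\overline{\calc}^\tau$ satisfies the hypotheses of Lemma \ref{lemma-KY} inside the locally convex space $(U,\tau)$. The set $\calc = (\calk_0 - V_+)\cap U$ is a convex cone because $\calk_0$ is a convex cone by Assumption \ref{ass-convex-0}, $V_+$ is a convex cone, and intersecting with the linear subspace $U$ preserves the cone structure; passing to the $\tau$-closure still yields a closed convex cone. Since $0 \in \calk_0$, we have $-U_+ \subset -V_+ \cap U \subset \calc \subset \overline{\calc}^\tau$. The assumption NFL$_\tau$ is precisely $\overline{\calc}^\tau \cap U_+ = \{0\}$. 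Therefore Lemma \ref{lemma-KY} applies to $\overline{\calc}^\tau$, and for each $x \in U_+ \setminus \{0\}$ produces a functional $x' \in U_+'$ with $x'(x) > 0$ and $x'(y) \leq 0$ for all $y \in \overline{\calc}^\tau$. Since $\calc \subset \overline{\calc}^\tau$, this $x'$ is in particular separating for $\calc$ in the sense of Definition \ref{def-strictly-sep-fct}.

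For the converse implication (ii) $\Rightarrow$ (i), I would argue by contradiction. Suppose $\overline{\calc}^\tau \cap U_+ \neq \{0\}$ and pick some $x \in \overline{\calc}^\tau \cap U_+$ with $x \neq 0$. By assumption (ii) there exists a separating functional $x' \in U_+'$ for $\calc$ with $x'(x) > 0$. By $\tau$-continuity of $x'$, the inequality $x'(y) \leq 0$ extends from $\calc$ to all $y \in \overline{\calc}^\tau$; applying this to $y = x$ gives $x'(x) \leq 0$, contradicting $x'(x) > 0$. Hence $\overline{\calc}^\tau \cap U_+ = \{0\}$, i.e.\ NFL$_\tau$ holds.

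The only step requiring genuine input beyond bookkeeping is the application of Lemma \ref{lemma-KY}, which in turn rests on the Hahn--Banach separation theorem in the locally convex space $(U,\tau)$; everything else is a routine verification that $\overline{\calc}^\tau$ is a closed convex cone with $-U_+ \subset \overline{\calc}^\tau$ and the trivial observation that $\tau$-continuity propagates the separating inequality from $\calc$ to its closure. No additional structural assumptions on $V$, on the density of $U$ in $V$, or on the family $(\calk_\alpha)_{\alpha > 0}$ are needed at this stage.
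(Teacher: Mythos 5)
Your proposal is correct and follows essentially the same route as the paper: the forward implication applies Lemma \ref{lemma-KY} to the closed convex cone $\overline{\calc}^{\tau}$ after checking $-U_+ \subset \overline{\calc}^{\tau}$ and $\overline{\calc}^{\tau} \cap U_+ = \{0\}$, and the converse shows that any $x \in U_+ \setminus \{0\}$ cannot lie in $\overline{\calc}^{\tau}$ because the separating inequality extends to the closure by continuity. Your verification that $-U_+ \subset \calc$ via $0 \in \calk_0$ is a detail the paper leaves implicit, but otherwise the two arguments coincide.
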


\begin{proof}
(i) $\Rightarrow$ (ii): Since $\calk_0$ satisfies NFL$_{\tau}$, we have
\begin{align*}
-U_+ \subset \overline{\calc}^{\tau} \quad \text{and} \quad \overline{\calc}^{\tau} \cap U_+ = \{ 0 \}.
\end{align*}
Noting that $\overline{\calc}^{\tau}$ is a closed convex cone, by Lemma \ref{lemma-KY} there exists a separating functional $x' \in U_+'$ for $\overline{\calc}^{\tau}$ such that $x'(x) > 0$. Of course, $x'$ is also a separating functional for $\calc$.

\noindent(ii) $\Rightarrow$ (i): Let $x \in U_+ \setminus \{ 0 \}$ be arbitrary. Then we have $x \notin \overline{\calc}^{\tau}$. Indeed, otherwise there is a net $(x_i)_{i \in I} \subset \calc$ such that $x_i \to x$. Then we have $x'(x_i) \leq 0$ for all $i \in I$, and hence the contradiction $x'(x) \leq 0$.
\end{proof}

\begin{corollary}\label{cor-FTAP-loc-conv}
If there exists a strictly separating functional $x' \in U_{++}'$ for $\calc$, then $\calk_0$ satisfies NFL$_{\tau}$.
\end{corollary}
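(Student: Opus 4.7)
The plan is to derive this directly from Theorem \ref{thm-FTAP-abstract-1}: the hypothesis supplies, in one stroke, a separating functional that witnesses condition (ii) of that theorem for \emph{every} $x \in U_+ \setminus \{ 0 \}$ simultaneously. So the work is essentially trivial once one notices that the \emph{same} functional $x'$ can be used uniformly.

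More concretely, I would proceed as follows. Assume there exists $x' \in U_{++}'$ which is separating for $\calc$, i.e.\ $x'(y) \leq 0$ for all $y \in \calc$ and $x'(z) > 0$ for all $z \in U_+ \setminus \{ 0 \}$ (this second property is precisely membership in $U_{++}'$, by Definition \ref{def-strictly-sep-fct}). Fix an arbitrary $x \in U_+ \setminus \{ 0 \}$. Then $x' \in U_+'$ is separating for $\calc$ and satisfies $x'(x) > 0$, so condition (ii) of Theorem \ref{thm-FTAP-abstract-1} holds. By that theorem, $\calk_0$ satisfies NFL$_{\tau}$.

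If one prefers a self-contained argument avoiding a forward reference, the same reasoning unpacks into a one-line contradiction: take any $x \in \overline{\calc}^{\tau} \cap U_+$ and choose a net $(x_i)_{i \in I} \subset \calc$ with $x_i \overset{\tau}{\to} x$; continuity of $x'$ with respect to $\tau$ together with $x'(x_i) \leq 0$ forces $x'(x) \leq 0$, whereas $x \in U_+$ combined with $x' \in U_{++}'$ forces $x'(x) > 0$ unless $x = 0$. Hence $\overline{\calc}^{\tau} \cap U_+ = \{ 0 \}$, which is precisely NFL$_{\tau}$.

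There is no substantive obstacle here; the corollary is essentially a restatement of the easy direction of Theorem \ref{thm-FTAP-abstract-1}, with the mild observation that a \emph{strictly} separating functional simultaneously certifies the separating property for all $x \in U_+ \setminus \{ 0 \}$. The only thing to double-check is that Definition \ref{def-strictly-sep-fct} does give $x'(x) > 0$ on $U_+ \setminus \{ 0 \}$, which is immediate from the definition of $U_{++}'$.
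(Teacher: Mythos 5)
Your proof is correct and matches the paper's: the paper also deduces the corollary as an immediate consequence of Theorem \ref{thm-FTAP-abstract-1}, using the single strictly separating functional as a uniform witness for condition (ii). Your optional self-contained net argument is just the unpacked (ii) $\Rightarrow$ (i) direction of that theorem, so nothing further is needed.
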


\begin{proof}
This is an immediate consequence of Theorem \ref{thm-FTAP-abstract-1}.
\end{proof}

\begin{definition}
Let $\calx \subset U_+ \setminus \{ 0 \}$ and $\calx' \subset U_+' \setminus \{ 0 \}$ be subsets. Then $\calx'$ is called \emph{strictly positive separating} for $\calx$ if for each $x \in \calx$ there exists $x' \in \calx'$ such that $x'(x) > 0$.
\end{definition}

The upcoming notion is inspired by the Halmos-Savage theorem; see, for example \cite[Thm. 1.61]{FS}. In \cite{Jouini} this condition is called \emph{Lindel\"{o}f condition}. 

\begin{definition}
The locally convex space $(U,\tau)$ has the \emph{Halmos-Savage property} if for every subset $\calx' \subset U_+' \setminus \{ 0 \}$ which is strictly positive separating for $U_+ \setminus \{ 0 \}$ there is a countable subset $\caly' \subset \calx'$ which is strictly positive separating for $U_+ \setminus \{ 0 \}$.
\end{definition}

The upcoming definition is inspired by \cite{Rokhlin-KY}.

\begin{definition}
The locally convex space $(U,\tau)$ has the \emph{Kreps-Yan property} if for every closed convex cone $\calc \subset U$ satisfying (\ref{KY-prop}) there exists a strictly separating functional $x' \in U_{++}'$ for $\calc$.
\end{definition}

\begin{proposition}\label{prop-KY-prop}
If a normed space $U$ has the Halmos-Savage property, then it also has the Kreps-Yan property.
\end{proposition}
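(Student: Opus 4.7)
The plan is to combine Lemma \ref{lemma-KY} with the Halmos--Savage property by means of a weighted countable sum of normalized separating functionals. Concretely, let $\calc \subset U$ be a closed convex cone satisfying (\ref{KY-prop}). By Lemma \ref{lemma-KY}, for every $x \in U_+ \setminus \{ 0 \}$ there exists a separating functional $z_x' \in U_+'$ for $\calc$ with $z_x'(x) > 0$; since such a $z_x'$ is automatically nonzero, we may normalize and set $x_x' := z_x' / \| z_x' \|_{U'}$.

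Next I would form the family
\begin{align*}
\calx' := \{ x_x' : x \in U_+ \setminus \{ 0 \} \} \subset U_+' \setminus \{ 0 \},
\end{align*}
which by construction is strictly positive separating for $U_+ \setminus \{ 0 \}$, with every element still separating for $\calc$ (multiplication by a positive scalar preserves both $U_+'$-membership and the inequality $x'(y) \leq 0$ on $\calc$). By the Halmos--Savage property of $(U,\tau)$, I can extract a countable subfamily $\caly' = \{ y_n' : n \in \bbn \} \subset \calx'$ that is still strictly positive separating for $U_+ \setminus \{ 0 \}$, and each $y_n'$ still satisfies $\| y_n' \|_{U'} = 1$ and separates $\calc$.

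Since $U$ is normed, $U'$ is a Banach space, so the series
\begin{align*}
x' := \sum_{n=1}^{\infty} 2^{-n} y_n'
\end{align*}
converges absolutely in $U'$ and defines a continuous linear functional. The key verifications are then routine: positivity ($x'(u) \geq 0$ for $u \in U_+$) and the separation inequality $x'(y) \leq 0$ for $y \in \calc$ follow termwise from the corresponding properties of each $y_n'$, while strict positivity follows because for any $x \in U_+ \setminus \{ 0 \}$ the Halmos--Savage selection yields some $n_0$ with $y_{n_0}'(x) > 0$, whence
\begin{align*}
x'(x) \geq 2^{-n_0} y_{n_0}'(x) > 0,
\end{align*}
i.e.\ $x' \in U_{++}'$, giving the desired strictly separating functional.

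The main issue to handle carefully is the passage from the pointwise-generated family $\calx'$ (produced by Lemma \ref{lemma-KY} and indexed by $U_+ \setminus \{0\}$, hence possibly uncountable) to a single functional: without the Halmos--Savage property there would be no mechanism to aggregate the separating functionals, since an uncountable positive combination need not make sense. Once the countable refinement is in hand, the normed structure of $U$ is exactly what is needed to ensure convergence of the weighted series and hence the continuity of the limit functional, closing the argument.
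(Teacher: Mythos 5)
Your proof is correct. The paper itself disposes of this proposition by citing \cite[Thm.~3.1]{Jouini}, and your argument is precisely the standard proof behind that citation: separate each $x \in U_+ \setminus \{0\}$ from $\calc$ via Lemma \ref{lemma-KY}, normalize, use the Halmos--Savage property to pass to a countable subfamily, and aggregate with the weighted series $\sum_n 2^{-n} y_n'$, whose absolute convergence in the Banach space $U'$ is exactly where the normed hypothesis enters. All the verifications (termwise positivity, termwise nonpositivity on $\calc$, and strict positivity from the single index $n_0$ together with nonnegativity of the remaining terms) are carried out correctly.
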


\begin{proof}
This is a consequence of \cite[Thm. 3.1]{Jouini}.
\end{proof}

\begin{theorem}[Abstract FTAP on locally convex spaces with Kreps-Yan property]\label{thm-FTAP-abstract-2}
Suppose that the locally convex space $(U,\tau)$ has the Kreps-Yan property. Then the following statements are equivalent:
\begin{enumerate}
\item[(i)] $\calk_0$ satisfies NFL$_{\tau}$.

\item[(ii)] There exists a strictly separating functional $x' \in U_{++}'$ for $\calc$.
\end{enumerate}
\end{theorem}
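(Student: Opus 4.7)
The direction (ii) $\Rightarrow$ (i) is immediate from Corollary \ref{cor-FTAP-loc-conv}, so the entire content lies in (i) $\Rightarrow$ (ii). My plan is to verify that the $\tau$-closure $\overline{\calc}^{\tau}$ meets the hypotheses (\ref{KY-prop}) of the Kreps-Yan property, and then to invoke that property directly.

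First, I would check that $\overline{\calc}^{\tau}$ is a closed convex cone. Since $\calk_0$ is a convex cone by Assumption \ref{ass-convex-0} and $V_+$ is a convex cone, the set $\calk_0 - V_+$ is a convex cone in $V$; intersecting with the subspace $U$ preserves this property, so $\calc$ is a convex cone, and in a topological vector space the $\tau$-closure of a convex cone is again a convex cone.

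Next, I would verify the two conditions in (\ref{KY-prop}). For $-U_+ \subset \calc$: since $\calk_0$ is a convex cone we have $0 \in \calk_0$, and for any $u \in U_+ = V_+ \cap U$ we may write $-u = 0 - u \in \calk_0 - V_+$ with $-u \in U$, hence $-u \in \calc \subset \overline{\calc}^{\tau}$. For $\overline{\calc}^{\tau} \cap U_+ = \{0\}$: this is precisely the assumption NFL$_{\tau}$.

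Thus $\overline{\calc}^{\tau}$ is a closed convex cone fulfilling (\ref{KY-prop}). By the Kreps-Yan property of $(U,\tau)$, there exists a strictly separating functional $x' \in U_{++}'$ for $\overline{\calc}^{\tau}$; since $\calc \subset \overline{\calc}^{\tau}$, the same functional is strictly separating for $\calc$ in the sense of Definition \ref{def-strictly-sep-fct}. I do not anticipate a genuine obstacle here: the theorem is essentially a translation of the Kreps-Yan property into the no-arbitrage language, and the only minor point to be careful about is ensuring $\calc$ itself (and therefore its closure) contains $-U_+$, which follows from $0 \in \calk_0$ and $U_+ \subset V_+$.
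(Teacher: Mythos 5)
Your proposal is correct and follows essentially the same route as the paper: verify that $\overline{\calc}^{\tau}$ is a closed convex cone satisfying (\ref{KY-prop}) (with $-U_+ \subset \calc$ coming from $0 \in \calk_0$, and the second condition being exactly NFL$_{\tau}$), apply the Kreps-Yan property, and note that a strictly separating functional for the closure also works for $\calc$; the converse is Corollary \ref{cor-FTAP-loc-conv} in both treatments. Your write-up is in fact slightly more explicit than the paper's about why $-U_+ \subset \overline{\calc}^{\tau}$.
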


\begin{proof}
(i) $\Rightarrow$ (ii): Since $\calk_0$ satisfies NFL$_{\tau}$, we have
\begin{align*}
-U_+ \subset \overline{\calc}^{\tau} \quad \text{and} \quad \overline{\calc}^{\tau} \cap U_+ = \{ 0 \}.
\end{align*}
Noting that $\overline{\calc}^{\tau}$ is a closed convex cone, there exists a strictly separating functional $x' \in U_{++}'$ for $\overline{\calc}^{\tau}$. Of course, $x'$ is also a strictly separating functional for $\calc$.

\noindent(ii) $\Rightarrow$ (i): This is an immediate consequence of Corollary \ref{cor-FTAP-loc-conv}.
\end{proof}

\begin{corollary}
Suppose that the locally convex space $(U,\tau)$ has the Kreps-Yan property. If $\calc$ is closed in $U$ with respect to $\tau$, then the following statements are equivalent:
\begin{enumerate}
\item[(i)] $\calk_0$ satisfies NA.

\item[(ii)] There exists a strictly separating functional $x' \in U_{++}'$ for $\calc$.
\end{enumerate}
\end{corollary}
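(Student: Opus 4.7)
The plan is to combine two results already established in the excerpt, as the corollary is essentially a direct composition. First, observe that since $(U,\tau)$ has the Kreps-Yan property, Theorem \ref{thm-FTAP-abstract-2} is applicable and yields the equivalence
\begin{align*}
\text{$\calk_0$ satisfies NFL$_\tau$} \quad \Longleftrightarrow \quad \text{there exists a strictly separating functional $x' \in U_{++}'$ for $\calc$.}
\end{align*}
This handles the ``separating functional'' side of the corollary without any extra work.

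Second, to replace NFL$_\tau$ with NA on the other side, I would invoke Corollary \ref{cor-NFL-NA}, which asserts precisely that when $\calc$ is closed in $U$ with respect to $\tau$, the conditions NFL$_\tau$ and NA are equivalent. Since this closedness hypothesis is exactly what the corollary assumes, the two equivalences chain directly: NA $\Leftrightarrow$ NFL$_\tau$ $\Leftrightarrow$ existence of $x' \in U_{++}'$ strictly separating $\calc$.

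There is no genuine obstacle here; the only thing to double-check is that Theorem \ref{thm-FTAP-abstract-2} and Corollary \ref{cor-NFL-NA} are being applied under compatible hypotheses. Both require $\calc \subset U$ to be the convex cone defined as $(\calk_0 - V_+) \cap U$ (which is automatic from the standing setup of the section, since $\calk_0$ is a convex cone by Assumption \ref{ass-convex-0}, so $\calc$ is a convex cone as well), and Corollary \ref{cor-NFL-NA} additionally needs $\tau$-closedness of $\calc$, which is the corollary's hypothesis. Thus the proof is a two-line concatenation invoking Theorem \ref{thm-FTAP-abstract-2} and Corollary \ref{cor-NFL-NA}, with no ancillary lemma required.
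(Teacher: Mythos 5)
Your proof is correct and matches the paper's own argument exactly: the paper also obtains this corollary by chaining Theorem \ref{thm-FTAP-abstract-2} with Corollary \ref{cor-NFL-NA}. Nothing further is needed.
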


\begin{proof}
This is a consequence of Theorem \ref{thm-FTAP-abstract-2} and Corollary \ref{cor-NFL-NA}.
\end{proof}

\begin{corollary}
Suppose that the locally convex space $(U,\tau)$ has the Kreps-Yan property, and that $\sigma \cap U \subset \tau$, where $\sigma$ denotes the topology on $V$. If $\calk_0 - V_+$ is closed in $V$, then the following statements are equivalent:
\begin{enumerate}
\item[(i)] $\calk_0$ satisfies NA.

\item[(ii)] There exists a strictly separating functional $x' \in U_{++}'$ for $\calc$.
\end{enumerate}
\end{corollary}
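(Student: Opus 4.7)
The plan is to prove this corollary by chaining together two earlier results that already package all the work. Specifically, the hypotheses split naturally into those needed for Corollary \ref{cor-NFL-NA-2} (the assumption $\sigma \cap U \subset \tau$ together with the closedness of $\calk_0 - V_+$ in $V$) and those needed for Theorem \ref{thm-FTAP-abstract-2} (the Kreps-Yan property of $(U,\tau)$). I would therefore go through NFL$_{\tau}$ as an intermediate condition.

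First, I would observe that by Corollary \ref{cor-NFL-NA-2} the closedness of $\calk_0 - V_+$ in $V$ together with $\sigma \cap U \subset \tau$ implies that $\calk_0$ satisfies NA if and only if $\calk_0$ satisfies NFL$_{\tau}$. Second, I would apply Theorem \ref{thm-FTAP-abstract-2} to the locally convex space $(U,\tau)$ with Kreps-Yan property to conclude that NFL$_{\tau}$ is equivalent to the existence of a strictly separating functional $x' \in U_{++}'$ for $\calc$. Concatenating these two equivalences yields the desired equivalence of (i) and (ii).

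There is essentially no obstacle here, since the corollary is a direct combination of the two preceding results. The only thing to double-check is that the hypotheses line up correctly: Corollary \ref{cor-NFL-NA-2} needs $\sigma \cap U \subset \tau$ and $\calk_0 - V_+$ closed in $V$ (both assumed), while Theorem \ref{thm-FTAP-abstract-2} needs only the Kreps-Yan property (also assumed). Thus the proof reduces to a single sentence invoking the two earlier statements, and no additional argument is required.
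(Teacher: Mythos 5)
Your proposal is correct and matches the paper's own proof exactly: the paper likewise derives this corollary by combining Corollary \ref{cor-NFL-NA-2} (giving NA $\Leftrightarrow$ NFL$_{\tau}$ under the closedness and topology-compatibility assumptions) with Theorem \ref{thm-FTAP-abstract-2} (giving NFL$_{\tau}$ $\Leftrightarrow$ existence of a strictly separating functional under the Kreps-Yan property). Your check that the hypotheses distribute correctly between the two cited results is exactly the right verification.
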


\begin{proof}
This is a consequence of Theorem \ref{thm-FTAP-abstract-2} and Corollary \ref{cor-NFL-NA-2}.
\end{proof}

Now, let us explain the connection to the framework considered in \cite{Kreps}; see also \cite{Schach-02}. Suppose that $\calk_0 \subset U$ is a subspace.\footnote{Note that in \cite{Kreps} and \cite{Schach-02} the origin is deleted from $\calk_0$.} Let $M \subset U$ be a subspace, and let $\pi \in M'$ be a $\tau$-continuous linear functional such that $\ker(\pi) = \calk_0$. We define the family $(\calk_{\alpha})_{\alpha > 0}$ of subsets of $M$ as
\begin{align*}
\calk_{\alpha} := \{ x \in M : x'(x) = \alpha \}, \quad \alpha > 0.
\end{align*}
Then we are in the framework of Section \ref{sec-NA-tvs}, and Assumption \ref{ass-convex-1} is fulfilled. Indeed, for all $a,b \in \bbr_+$, $\alpha,\beta > 0$ with $a \alpha + b \beta > 0$ and $x \in \calk_{\alpha}$, $y \in \calk_{\beta}$ we have
\begin{align*}
x'(ax+by) = a x'(x) + b x'(y) = a \alpha + b \beta,
\end{align*}
and hence (\ref{K-1-cone}) is satisfied. We assume that $\pi$ is positive; that is $\pi(M_+) \subset \bbr_+$.

\begin{lemma}
$\calk_0$ satisfies NA if and only if $\pi \in M_{++}'$.
\end{lemma}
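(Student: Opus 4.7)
The plan is to prove both implications directly from the definitions, exploiting the two defining relations $\calk_0 = \ker(\pi)$ (hence $\calk_0 \subset M$) and the fact that the positive cone on any sublattice/subspace is inherited as $M_+ = M \cap V_+$ (which, since $M \subset U$ and $U$ is an ideal, agrees with $M \cap U_+$). The characterization of NA that I will use is the one from Section \ref{sec-NA-tvs}, namely $\calk_0 \cap V_+ = \{ 0 \}$.

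For the implication NA $\Rightarrow$ $\pi \in M_{++}'$, let $x \in M_+ \setminus \{ 0 \}$ be arbitrary. Positivity of $\pi$ gives $\pi(x) \geq 0$, so it suffices to rule out $\pi(x) = 0$. If $\pi(x) = 0$, then $x \in \ker(\pi) = \calk_0$, and since $x \in M_+ \subset V_+$, we obtain $x \in \calk_0 \cap V_+$. By NA this forces $x = 0$, contradicting the choice of $x$. Hence $\pi(x) > 0$, which is the definition of $\pi \in M_{++}'$.

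For the converse, assume $\pi \in M_{++}'$ and take any $x \in \calk_0 \cap V_+$. From $\calk_0 \subset M$ and $x \in V_+$ we get $x \in M \cap V_+ = M_+$, while $x \in \calk_0 = \ker(\pi)$ yields $\pi(x) = 0$. The strict positivity of $\pi$ on $M_+ \setminus \{ 0 \}$ then forces $x = 0$, which gives $\calk_0 \cap V_+ = \{ 0 \}$, i.e., NA.

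I do not expect a genuine obstacle here; the statement is essentially a tautological translation of NA into the language of the pricing functional $\pi$, and the only point requiring a small amount of care is to make sure that the positive cone $M_+$ is consistently interpreted as $M \cap V_+$ so that membership in $M_+$ and membership in $\calk_0 \cap V_+$ can be freely exchanged for vectors lying in $\calk_0 \subset M$.
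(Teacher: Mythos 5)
Your proof is correct and follows essentially the same route as the paper: both arguments reduce NA to $\ker(\pi)\cap U_+ = \{0\}$ and use the positivity of $\pi$ to identify this with strict positivity of $\pi$ on $(M\cap U_+)\setminus\{0\}$. The only difference is that you spell out the two implications which the paper compresses into one line, and your care about $M_+ = M\cap V_+ = M\cap U_+$ (valid since $\calk_0\subset M\subset U$ and $U_+ = V_+\cap U$) matches the paper's implicit use of the same identification.
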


\begin{proof}
$\calk_0$ satisfies NA if and only if $\calk_0 \cap U_+ = \{ 0 \}$, that is $\ker(\pi) \cap U_+ = \{ 0 \}$. Furthermore, we have $\pi \in M_{++}'$ if and only if $\pi(x) > 0$ for all $x \in (M \cap U_+) \setminus \{ 0 \}$. Since $\pi$ is positive, this proves the stated equivalence.
\end{proof}

Hence, we assume $\pi \in M_{++}'$. The market is called \emph{viable} if there exists a strictly positive functional $x' \in U_{++}'$ such that $x'|_M = \pi$. Our notion of NFL$_{\tau}$ is equivalent to the notion of \emph{no free lunch} in \cite{Kreps}. Using our previous results, we obtain the following self-contained proof for the statement of \cite[Thm. 2]{Kreps}.

\begin{theorem}\label{thm-viable}
Suppose that the market is viable. Then $\calk_0$ satisfies NFL$_{\tau}$.
\end{theorem}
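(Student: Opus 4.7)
The strategy is to show that the viability functional $x' \in U_{++}'$ itself is strictly separating for $\calc$, and then invoke Corollary \ref{cor-FTAP-loc-conv}, which says precisely that the existence of such a functional implies NFL$_{\tau}$. So the entire proof reduces to checking that $x'(y) \leq 0$ for every $y \in \calc$; the strictly-positive-on-$U_+\setminus\{0\}$ half of Definition \ref{def-strictly-sep-fct} is supplied directly by the viability assumption.

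To verify the separating inequality, I would fix $y \in \calc = (\calk_0 - V_+) \cap U$ arbitrarily and write $y = k - v$ with $k \in \calk_0$ and $v \in V_+$. The key observation is that $v$ actually lies in $U_+$, not merely in $V_+$: by the standing hypothesis of the subsection $\calk_0 \subset M \subset U$, so $k \in U$; since $y \in U$ as well, $v = k - y$ lies in $U$, and hence $v \in V_+ \cap U = U_+$. Then linearity gives $x'(y) = x'(k) - x'(v)$. The first term vanishes because $k \in \calk_0 = \ker(\pi)$ and $x'|_M = \pi$, so $x'(k) = \pi(k) = 0$. The second term is nonnegative because $x' \in U_+'$ and $v \in U_+$. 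Therefore $x'(y) = -x'(v) \leq 0$, as required.

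Combining this with $x' \in U_{++}'$ (given by viability), $x'$ is a strictly separating functional for $\calc$ in the sense of Definition \ref{def-strictly-sep-fct}, and Corollary \ref{cor-FTAP-loc-conv} yields NFL$_{\tau}$.

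There is no real obstacle here; the only subtle point worth flagging is that the decomposition $y = k - v$ a priori only places $v$ in $V_+$, and the conclusion $v \in U_+$ requires using both $k \in U$ (from $\calk_0 \subset U$) and $y \in U$ (from $\calc \subset U$). Without this step one could not apply the positivity of $x'$, which is defined only on $U$.
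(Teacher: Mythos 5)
Your proof is correct and follows essentially the same route as the paper: both identify the viability functional $x'$ as a separating functional for $\calc$ by decomposing $y = k - v$, using $x'(k) = \pi(k) = 0$ and the positivity of $x'$ on $U_+$, and then invoking Corollary \ref{cor-FTAP-loc-conv}. The point you flag about $v$ lying in $U_+$ rather than merely $V_+$ is exactly what the paper condenses into the remark that $\calk_0 \subset U$ implies $\calc = \calk_0 - U_+$; your version just makes that step explicit.
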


\begin{proof}
By assumption there exists a strictly positive functional $x' \in U_{++}'$ such that $x'|_M = \pi$. Since $\calk_0 \subset U$, we have $\calc = \calk_0 - U_+$. Therefore, for each $x \in \calc$ there are $y \in \calk_0$ and $z \in U_+$ such that $x = y - z$, and we obtain
\begin{align*}
x'(x) = x'(y) - x'(z) = \pi(y) - x'(z) = -x'(z) \leq 0,
\end{align*}
showing that $x'$ is a separating functional for $\calc$. By Corollary \ref{cor-FTAP-loc-conv} we deduce that $\calk_0$ satisfies NFL$_{\tau}$.
\end{proof}

\begin{remark}
It raises the question when the converse of Theorem \ref{thm-viable} holds true. Sufficient conditions are provided in \cite[Thm. 3]{Kreps}. As a consequence, which is in particular of interest for the upcoming Section \ref{sec-BFS}, market viability and NFL$_{\tau}$ are essentially equivalent.
\end{remark}

For the rest of this section, let $\Omega$ be a compact metric space. We set $V := U := C(\Omega)$, and denote by $\tau$ the topology induced by the supremum norm. Equipped with this topology, the spaces $V$ and $U$ are Banach spaces.

\begin{lemma}\label{lemma-C-K-HS-prop}
The Banach space $U$ has the Halmos-Savage property.
\end{lemma}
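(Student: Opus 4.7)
The plan is to exploit two classical properties of $C(\Omega)$ when $\Omega$ is a compact metric space. First, by the Riesz-Markov representation theorem, positive continuous linear functionals on $U = C(\Omega)$ correspond bijectively to finite positive Borel measures on $\Omega$ via $x'_\mu(f) = \int_\Omega f \, d\mu$, so I may freely identify any $\calx' \subset U_+' \setminus \{0\}$ with a family $\calm$ of nonzero finite positive Borel measures. Second, $\Omega$ is second countable, so I fix a countable basis $(V_n)_{n \in \bbn}$ of its topology.

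Assume now that $\calx'$ is strictly positive separating for $U_+ \setminus \{0\}$. For each index $n$ with $V_n \neq \emptyset$, I pick any $x_0 \in V_n$ and apply Urysohn's lemma to the closed set $\Omega \setminus V_n$ and the point $x_0$ to obtain a continuous function $f_n : \Omega \to [0,1]$ with $f_n(x_0) = 1$ and $f_n \equiv 0$ on $\Omega \setminus V_n$. Then $f_n \in U_+ \setminus \{0\}$ and $\{f_n > 0\} \subset V_n$, so by the separating property of $\calx'$ there exists some $x'_n \in \calx'$, represented by a measure $\mu_n$, with $\int f_n \, d\mu_n > 0$; in particular this forces $\mu_n(V_n) > 0$. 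I then set $\caly' := \{x'_n : n \in \bbn,\ V_n \neq \emptyset\}$, which is a countable subset of $\calx'$.

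To check that $\caly'$ remains strictly positive separating for $U_+ \setminus \{0\}$, consider an arbitrary $f \in U_+ \setminus \{0\}$. The set $W := \{f > 0\}$ is nonempty and open, so by the basis property there exists $n$ with $\emptyset \neq V_n \subset W$. Writing $V_n = \bigcup_{k \in \bbn} (V_n \cap \{f > 1/k\})$ and using $\mu_n(V_n) > 0$ together with countable additivity, I find $k \in \bbn$ with $\mu_n(V_n \cap \{f > 1/k\}) > 0$, whence $x'_n(f) = \int_\Omega f \, d\mu_n \geq (1/k) \, \mu_n(V_n \cap \{f > 1/k\}) > 0$. The whole argument is a routine combination of Riesz-Markov, Urysohn's lemma, and the second countability of compact metric spaces; no step represents a genuine obstacle.
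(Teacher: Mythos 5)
Your proof is correct. It rests on the same three pillars as the paper's argument -- the Riesz representation of positive functionals as positive Borel measures, Urysohn's lemma to manufacture test functions supported in small open sets, and the countability of a suitable family of such sets -- but you implement the countability differently. The paper uses total boundedness: for each $n$ it covers $\Omega$ by finitely many balls $B_{1/n}(x_{n,k})$ and then, given $f \in U_+ \setminus \{0\}$ with $f > c$ on some $B_\epsilon(x_0)$, runs a triangle-inequality argument to locate a ball $B_{1/n}(x_{n,k}) \subset B_\epsilon(x_0)$ on which $f$ is uniformly bounded below. You instead invoke second countability and fix a countable basis $(V_n)$, so that the final step reduces to the observation that any nonempty open subset of $\{f > 0\}$ contains a basic open set $V_n$ with $\mu_n(V_n) > 0$; the uniform lower bound on $f$ is then replaced by the exhaustion $V_n = \bigcup_k (V_n \cap \{f > 1/k\})$ and continuity from below, which is just the standard proof that a strictly positive function integrated over a set of positive measure has positive integral. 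Your route is somewhat cleaner, as it dispenses with the explicit metric estimates, at the cost of being tied to second countability rather than to compactness per se; since compact metric spaces are separable the two hypotheses coincide here, so nothing is lost. One cosmetic remark: when you apply Urysohn's lemma you should say you apply it to the disjoint closed sets $\Omega \setminus V_n$ and $\{x_0\}$ (a singleton is closed in a metric space), which is what you evidently intend.
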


\begin{proof}
Let $\calx' \subset U_+' \setminus \{ 0 \}$ be a subset which is strictly positive separating for $U_+ \setminus \{ 0 \}$. By the Riesz representation theorem, for each $x' \in \calx'$ there exists a regular Borel measure $\mu_{x'}$ on $(\Omega,\calb(\Omega))$ such that
\begin{align*}
x'(f) = \int_{\Omega} f \, d \mu_{x'}, \quad f \in U.
\end{align*}
Since $x' \in U_+' \setminus \{ 0 \}$, we have $\mu_{x'} \geq 0$ and $\mu_{x'} \neq 0$. Furthermore, since $\Omega$ is compact, for each $n \in \bbn$ there exist an index $m(n) \in \bbn$ and finitely many elements $x_{n,k} \in \Omega$, $k=1,\ldots,m(n)$ such that we have the covering
\begin{align}\label{covering}
\Omega = \bigcup_{k=1}^{m(n)} B_{\frac{1}{n}}(x_{n,k}), 
\end{align}
where for $\epsilon > 0$ and $x \in \Omega$ the set
\begin{align*}
B_{\epsilon}(x) := \{ y \in \Omega : d(x,y) < \epsilon \}
\end{align*}
denotes the open ball of radius $\epsilon$ around $x$, and where $d$ denotes the metric on $\Omega$. Now, let $n \in \bbn$ and $k \in \{ 1,\ldots,m(n) \}$ be arbitrary. We define the closed subsets
\begin{align*}
A := \Omega \setminus B_{\frac{1}{n}}(x_{n,k}) \quad \text{and} \quad B := \overline{B_{\frac{1}{2n}}(x_{n,k})}.
\end{align*}
By Urysohn's lemma there exists a continuous function $f \in C(\Omega;[0,1])$ such that $f|_A \equiv 0$ and $f|_B \equiv 1$. In particular, we have $f \in U_+ \setminus \{ 0 \}$. Since $\calx'$ is strictly positive separating for $U_+ \setminus \{ 0 \}$, there exists a functional $x_{n,k}' \in \calx'$ such that $x_{n,k}'(f) > 0$, and we obtain 
\begin{align}\label{mu-positive}
\mu_{x_{n,k}'} \big( B_{\frac{1}{n}}(x_{n,k}) \big) \geq \int_{\Omega} f \, d \mu_{x_{n,k}'} = x_{n,k}'(f) > 0. 
\end{align}
We define the countable subset $\caly' \subset \calx'$ as
\begin{align*}
\caly' := \bigcup_{n \in \bbn} \bigcup_{k=1}^{m(n)} \{ x_{n,k}' \}.
\end{align*}
Now, let $f \in U_+ \setminus \{ 0 \}$ be arbitrary. Then there exists $x_0 \in \Omega$ such that $f(x_0) > 0$. We set $c := \frac{f(x_0)}{2} > 0$. By the continuity of $f$ there exists $\epsilon > 0$ such that
\begin{align*}
f(x) > c \quad \text{for all $x \in B_{\epsilon}(x_0)$.} 
\end{align*}
Let $n \in \bbn$ be such that $\frac{1}{n} \leq \frac{\epsilon}{4}$. There exists $k \in \{ 1,\ldots,m(n) \}$ such that $x_{n,k} \in B_{\frac{\epsilon}{2}}(x_0)$. Indeed, suppose this is not true, and let $k \in \{ 1,\ldots,m(n) \}$ be arbitrary. Then for each $y \in B_{\frac{1}{n}}(x_{n,k})$ we have
\begin{align*}
\underbrace{d(x_0,x_{n,k})}_{\geq \frac{\epsilon}{2}} \leq d(x_0,y) + \underbrace{d(y,x_{n,k})}_{< \frac{\epsilon}{4}},
\end{align*}
and hence $d(x_0,y) \geq \frac{\epsilon}{4}$, showing that $x_0 \notin B_{\frac{1}{n}}(x_{n,k})$, which contradicts the covering (\ref{covering}). Now, we obtain $B_{\frac{1}{n}}(x_{n,k}) \subset B_{\epsilon}(x_0)$ because for each $y \in B_{\frac{1}{n}}(x_{n,k})$ we have
\begin{align*}
d(y,x_0) \leq \underbrace{d(y,x_{n,k})}_{< \frac{\epsilon}{4}} + \underbrace{d(x_{n,k},x_0)}_{< \frac{\epsilon}{2}} < \epsilon.
\end{align*}
Therefore, we obtain
\begin{align}\label{f-positive}
f(x) > c \quad \text{for all $x \in B_{\frac{1}{n}}(x_{n,k})$.} 
\end{align}
Consequently, choosing $x_{n,k}' \in \caly'$, by (\ref{mu-positive}) and (\ref{f-positive}) we obtain
\begin{align*}
x_{n,k}'(f) = \int_{\Omega} f \, d \mu_{x_{n,k}'} \geq c \cdot \mu_{x_{n,k}'} \big( B_{\frac{1}{n}}(x_{n,k}) \big) > 0,
\end{align*}
showing that $\caly'$ is strictly positive separating for $U_+ \setminus \{ 0 \}$.
\end{proof}

Let $\calg := \calb(\Omega)$ be the Borel $\sigma$-algebra. A probability measure $\bbq$ on $(\Omega,\calg)$ is called a \emph{separating measure} if $\bbe_{\bbq}[X] \leq 0$ for all $X \in \calc$. Note that in the present situation the convex cone $\calc \subset V$ is given by
\begin{align*}
\calc = \calk_0 - V_+.
\end{align*}

\begin{lemma}\label{lemma-Riesz-repr-prob}
For each continuous linear functional $x' \in U'$ the following statements are equivalent:
\begin{enumerate}
\item[(i)] We have $x' \in U_{++}'$.

\item[(ii)] There exist a probability measure $\bbq$ on $(\Omega,\calg)$ with $\supp(\bbq) = \Omega$ and a constant $c \in (0,\infty)$ such that
\begin{align}\label{Riesz-repr-prob}
x'(X) = c \cdot \bbe_{\bbq}[X] \quad \text{for all $X \in U$.}
\end{align}
\end{enumerate}
\end{lemma}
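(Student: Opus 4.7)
The plan is to prove the two implications separately, with the Riesz representation theorem as the main tool. For (ii) $\Rightarrow$ (i), if $x'(X) = c \cdot \bbe_{\bbq}[X]$ with $c > 0$ and $\supp(\bbq) = \Omega$, then $x'$ is continuous, linear, and positive. For strict positivity: given $X \in U_+ \setminus \{ 0 \}$, continuity of $X$ together with $X \not\equiv 0$ produces a nonempty open set $O \subset \Omega$ and $\delta > 0$ with $X \geq \delta \bbI_O$. Since $O$ is nonempty and open and $\supp(\bbq) = \Omega$, we have $\bbq(O) > 0$, whence $x'(X) \geq c \delta \bbq(O) > 0$.

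For (i) $\Rightarrow$ (ii), I would apply the Riesz representation theorem to obtain a regular Borel measure $\mu$ on $(\Omega,\calg)$ with $x'(X) = \int_{\Omega} X \, d\mu$ for all $X \in U$. Positivity of $x'$ forces $\mu \geq 0$. The constant function $\bbI_{\Omega}$ belongs to $U_+ \setminus \{ 0 \}$, so by $x' \in U_{++}'$ the number $c := \mu(\Omega) = x'(\bbI_{\Omega})$ is strictly positive. Hence $\bbq := \mu/c$ is a well-defined probability measure and (\ref{Riesz-repr-prob}) holds.

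It remains to verify $\supp(\bbq) = \Omega$, which I would do by contradiction. Suppose there is a nonempty open set $O \subset \Omega$ with $\bbq(O) = 0$. Pick $x_0 \in O$ and $\epsilon > 0$ with $B_{2\epsilon}(x_0) \subset O$. Since $\Omega$ is a compact metric space (hence normal), Urysohn's lemma yields a function $f \in C(\Omega;[0,1])$ with $f \equiv 1$ on $\overline{B_{\epsilon}(x_0)}$ and $f \equiv 0$ on $\Omega \setminus O$. Then $f \in U_+ \setminus \{ 0 \}$, but
\begin{align*}
x'(f) = c \int_{\Omega} f \, d\bbq \leq c \cdot \bbq(O) = 0,
\end{align*}
contradicting $x' \in U_{++}'$. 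Hence $\supp(\bbq) = \Omega$.

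The argument is essentially bookkeeping: the only nontrivial inputs are the Riesz representation theorem and Urysohn's lemma, and there is no genuine obstacle. If anything, the mildly delicate point is ensuring via Urysohn that one can actually exhibit a nonzero element of $U_+$ supported inside an arbitrary nonempty open set, for which normality of $\Omega$ (from the compact metric hypothesis) is exactly what is needed.
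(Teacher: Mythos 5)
Your proof is correct and follows essentially the same route as the paper: the Riesz representation theorem plus positivity to extract $\mu \geq 0$ and normalize, a Urysohn-type bump function to rule out a gap in the support, and a local lower bound $X \geq \delta \bbI_O$ on an open set of positive $\bbq$-measure for the converse. The only cosmetic difference is that you obtain $c>0$ by testing $x'$ on $\bbI_\Omega$ rather than noting $\mu \neq 0$; both are fine.
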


\begin{proof}
(i) $\Rightarrow$ (ii): By the Riesz representation theorem there exists a finite signed Borel measure $\mu$ on $(\Omega,\calg)$ such that
\begin{align*}
x'(X) = \int_{\Omega} X \, d\mu \quad \text{for all $X \in U$.}
\end{align*}
Since $x' \in U_{++}'$, we have $\mu \geq 0$ and $\mu \neq 0$. Setting $c := \mu(\Omega) \in (0,\infty)$ and $\bbq := \frac{\mu}{c}$, we obtain (\ref{Riesz-repr-prob}). It remains to prove that $\supp(\bbq) = \Omega$. Suppose, on the contrary, there exists $\omega_0 \in \Omega$ with $\omega_0 \notin \supp(\bbq)$. Then there is $\epsilon > 0$ such that $\bbq(B_{\epsilon}(\omega_0)) = 0$. There exists a continuous function $X \in U_+ \setminus \{ 0 \}$ such that $X(\omega) = 0$ for all $\omega \in \Omega \setminus B_{\epsilon}(\omega_0)$. Now, we obtain $x'(X) = 0$, which contradicts $x' \in U_{++}'$.

\noindent(ii) $\Rightarrow$ (i): Let $X \in U_+ \setminus \{ 0 \}$ be arbitrary. Then there exists $\omega_0 \in \Omega$ such that $X(\omega_0) > 0$. We set $y := \frac{X(\omega_0)}{2} > 0$. By the continuity of $X$ there exists $\epsilon > 0$ such that
\begin{align*}
X(\omega) > y \quad \text{for all $\omega \in B_{\epsilon}(\omega_0)$.} 
\end{align*}
Since $\supp(\bbq) = \Omega$, we have $\bbq(B_{\epsilon}(\omega_0)) > 0$, and hence, we obtain
\begin{align*}
x'(X) = c \cdot \bbe_{\bbq}[X] \geq c \cdot y \cdot \bbq(B_{\epsilon}(\omega_0)) > 0,
\end{align*}
showing that $x' \in U_{++}'$.
\end{proof}

\begin{theorem}[Abstract FTAP on spaces of continuous functions on compact sets]\label{thm-FTAP-Riesz}
The following statements are equivalent:
\begin{enumerate}
\item[(i)] $\calk_0$ satisfies NFL$_{\tau}$.

\item[(ii)] There exists a separating measure $\bbq$ for $\calc$ with $\supp(\bbq) = \Omega$.
\end{enumerate}
\end{theorem}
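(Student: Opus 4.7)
The plan is to combine the abstract FTAP on locally convex spaces with the Kreps--Yan property (Theorem \ref{thm-FTAP-abstract-2}) with the Riesz-type representation of strictly positive functionals on $C(\Omega)$ (Lemma \ref{lemma-Riesz-repr-prob}). First I would verify that $U = C(\Omega)$ qualifies for Theorem \ref{thm-FTAP-abstract-2}: by Lemma \ref{lemma-C-K-HS-prop} the space $U$ has the Halmos--Savage property, hence by Proposition \ref{prop-KY-prop} it has the Kreps--Yan property. This makes Theorem \ref{thm-FTAP-abstract-2} applicable and reduces the problem to identifying strictly separating functionals $x' \in U_{++}'$ for $\calc$ with separating measures with full support.

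For the implication (i) $\Rightarrow$ (ii), I would apply Theorem \ref{thm-FTAP-abstract-2} to obtain a strictly separating functional $x' \in U_{++}'$ for $\calc$. Lemma \ref{lemma-Riesz-repr-prob} then supplies a probability measure $\bbq$ on $(\Omega,\calg)$ with $\supp(\bbq) = \Omega$ and a constant $c \in (0,\infty)$ such that $x'(X) = c \cdot \bbe_{\bbq}[X]$ for all $X \in U$. Since $x'$ is separating for $\calc$, dividing by $c > 0$ shows that $\bbe_{\bbq}[X] \leq 0$ for all $X \in \calc$; that is, $\bbq$ is a separating measure for $\calc$ with $\supp(\bbq) = \Omega$.

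For the converse (ii) $\Rightarrow$ (i), given a separating measure $\bbq$ with $\supp(\bbq) = \Omega$, I would define the continuous linear functional $x' : U \to \bbr$ by $x'(X) := \bbe_{\bbq}[X]$. Applying Lemma \ref{lemma-Riesz-repr-prob} with $c = 1$ in the reverse direction shows that $x' \in U_{++}'$, and the hypothesis on $\bbq$ immediately yields $x'(Y) \leq 0$ for all $Y \in \calc$, so $x'$ is a strictly separating functional for $\calc$. Corollary \ref{cor-FTAP-loc-conv} then delivers NFL$_{\tau}$.

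There is no real obstacle here; the proof is essentially a bookkeeping assembly of Lemmas \ref{lemma-C-K-HS-prop} and \ref{lemma-Riesz-repr-prob}, Proposition \ref{prop-KY-prop}, Theorem \ref{thm-FTAP-abstract-2} and Corollary \ref{cor-FTAP-loc-conv}. The only subtle point is the translation between the abstract positivity condition $x' \in U_{++}'$ and the full-support condition $\supp(\bbq) = \Omega$, but this is precisely the content of Lemma \ref{lemma-Riesz-repr-prob} and requires no additional work.
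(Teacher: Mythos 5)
Your proposal is correct and follows exactly the route the paper takes: the paper's proof is precisely the one-line combination of Lemma \ref{lemma-C-K-HS-prop}, Proposition \ref{prop-KY-prop}, Theorem \ref{thm-FTAP-abstract-2} and Lemma \ref{lemma-Riesz-repr-prob}, which you have simply written out in detail. The only difference is cosmetic: you invoke Corollary \ref{cor-FTAP-loc-conv} explicitly for the converse direction, which is already how the implication (ii) $\Rightarrow$ (i) of Theorem \ref{thm-FTAP-abstract-2} is established.
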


\begin{proof}
This is a consequence of Lemma \ref{lemma-C-K-HS-prop}, Proposition \ref{prop-KY-prop}, Theorem \ref{thm-FTAP-abstract-2} and Lemma \ref{lemma-Riesz-repr-prob}.
\end{proof}

\begin{remark}
From the perspective of financial modeling we can equip the space $(\Omega,\calg)$ with a probability measure $\bbp$. Theorem \ref{thm-FTAP-Riesz} tells us that $\calk_0$ satisfies NFL$_{\tau}$ if and only if there exists a separating measure $\bbq$ with full support. Note that this measure is not related to the physical probability measure $\bbp$; in particular, it does not need to be equivalent to $\bbp$, as it is the case for classical versions of the FTAP.
\end{remark}

\section{The space of random variables}\label{sec-random-var}

In this section we will consider our abstract no-arbitrage concepts on the space of random variables, and review the concepts which are known in the literature. Let $(\Omega,\calg,\bbp)$ be a probability space. We denote by $V = L^0(\Omega,\calg,\bbp)$ the space of all equivalence classes of real-valued random variables, in short $V = L^0$. Here two random variables $X$ and $Y$ are identified if $\bbp(X=Y) = 1$. Furthermore, we write $X \leq Y$ if $\bbp(X \leq Y) = 1$. The space $L^0$ equipped with the metric
\begin{align}\label{metric}
d(X,Y) = \bbe[|X - Y| \wedge 1], \quad X,Y \in L^0
\end{align}
is a topological vector space, and convergence with respect to this metric is convergence in probability; that is, we have $d(X_n,X) \to 0$ if and only if $X_n \overset{\bbp}{\to} X$. For this statement see, for example Exercise A.8.9 on page 450 in \cite{Bichteler}. The positive cone of $L^0$ is denoted by $L_+^0$.

\begin{proposition}\label{prop-L0-tvl}
The space $(L^0,\leq)$ is a Fr\'{e}chet lattice.
\end{proposition}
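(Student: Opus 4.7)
The plan is to verify the three structural pieces of Definition: $(L^0, \leq)$ is (i) a vector lattice, (ii) a completely metrizable Hausdorff topological vector space, and (iii) locally solid.

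First I would record that $L^0$ is a vector lattice with lattice operations defined pointwise on representatives, i.e.\ $(X \vee Y)(\omega) := \max\{X(\omega), Y(\omega)\}$ and $(X \wedge Y)(\omega) := \min\{X(\omega), Y(\omega)\}$. These are well-defined on equivalence classes, and the ordered vector space axioms are inherited pointwise from $\bbr$. The positive cone is $L_+^0 = \{X \in L^0 : \bbp(X \geq 0) = 1\}$.

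Next I would address the topological vector space structure and complete metrizability. The metric $d$ in \eqref{metric} is translation invariant and $d(X,Y) = 0$ forces $|X-Y| \wedge 1 = 0$ $\bbp$-a.s., hence $X = Y$ in $L^0$, so the induced topology is Hausdorff. Since $d$-convergence coincides with convergence in probability, continuity of addition and scalar multiplication reduces to the standard facts that $X_n \overset{\bbp}{\to} X$ and $Y_n \overset{\bbp}{\to} Y$ imply $X_n + Y_n \overset{\bbp}{\to} X + Y$, and $\alpha_n \to \alpha$ with $X_n \overset{\bbp}{\to} X$ imply $\alpha_n X_n \overset{\bbp}{\to} \alpha X$. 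Completeness of $(L^0, d)$ is the classical result that every $d$-Cauchy sequence admits an a.s.\ convergent subsequence (via Borel--Cantelli) whose limit is the $d$-limit of the original sequence; I would simply cite this.

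The key step is locally solid. Consider the $d$-balls around zero
\begin{equation*}
B_\epsilon := \{ X \in L^0 : \bbe[|X| \wedge 1] < \epsilon \}, \quad \epsilon > 0,
\end{equation*}
which form a neighborhood basis of $0$. If $X \in B_\epsilon$ and $Y \in L^0$ satisfies $|Y| \leq |X|$ in $L^0$, then $|Y| \wedge 1 \leq |X| \wedge 1$ $\bbp$-a.s., so $\bbe[|Y| \wedge 1] \leq \bbe[|X| \wedge 1] < \epsilon$, i.e.\ $Y \in B_\epsilon$. Hence each $B_\epsilon$ is solid, proving $L^0$ is locally solid. Combining everything, $(L^0, \leq)$ is a topological vector lattice which is completely metrizable, i.e.\ a Fr\'echet lattice.

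I expect no genuine obstacle; the only point requiring care is not confusing convergence in probability with $L^1$-convergence when checking completeness, which is why invoking the a.s.\ convergent subsequence argument is cleaner than trying to argue directly with the metric.
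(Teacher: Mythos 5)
Your argument is correct, but it takes a different route from the paper: the paper disposes of this proposition by citing \cite[Thm. 13.41]{Aliprantis-Border}, whereas you give a self-contained verification of the three ingredients (vector lattice structure, completely metrizable Hausdorff topological vector space, local solidness). All of your steps check out. The lattice operations are indeed well defined on equivalence classes; the metric $d$ is translation invariant and separates points; joint continuity of the vector operations under convergence in probability is the standard fact you state (for scalar multiplication one writes $\alpha_n X_n - \alpha X = \alpha_n (X_n - X) + (\alpha_n - \alpha) X$ and uses boundedness of $(\alpha_n)$ and a.s.\ finiteness of $X$); completeness follows from the Borel--Cantelli subsequence argument; and your observation that each ball $B_\epsilon$ is solid, because $|Y| \leq |X|$ a.s.\ implies $|Y| \wedge 1 \leq |X| \wedge 1$ a.s.\ and hence $\bbe[|Y| \wedge 1] \leq \bbe[|X| \wedge 1]$, is exactly the point that makes $L^0$ locally solid. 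Note that the paper's definition of a topological vector lattice does not separately require closedness of $L_+^0$ (this is derived afterwards from local solidness via Schaefer), so your list of things to check is complete. What your approach buys is transparency --- in particular it isolates where the monotonicity of $t \mapsto t \wedge 1$ enters --- at the cost of a page of routine verification that the citation avoids.
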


\begin{proof}
See \cite[Thm. 13.41]{Aliprantis-Border}.
\end{proof}

\begin{remark}
Let us list some further properties of the topological vector lattice $(L^0,\leq)$.
\begin{itemize}
\item If $(\Omega,\calg,\bbp)$ is non-atomic, then the dual space of $L^0$ is trivial, and hence $L^0$ is not locally convex; see \cite[Thm. 13.41]{Aliprantis-Border}.

\item $L^0$ is a so-called $F$-space. This follows from Proposition \ref{prop-L0-tvl} and the definition (\ref{metric}) of the metric $d$.

\item If $\bbq \approx \bbp$ is an equivalent probability measure, then the new metric
\begin{align*}
d_{\bbq}(X,Y) = \bbe_{\bbq}[|X - Y| \wedge 1], \quad X,Y \in L^0
\end{align*}
induces the same topology on $L^0$; see \cite{Kardaras-L0}.
\end{itemize}
\end{remark}

\begin{definition}
A subset $\calb \subset V$ is called \emph{bounded in probability} if for each $\epsilon > 0$ there exists $c > 0$ such that
\begin{align*}
\sup_{X \in \calb} \bbp( |X| \geq c ) < \epsilon.
\end{align*}
\end{definition}

\begin{lemma}\label{lemma-bounded-in-prob}
For a subset $\calb \subset V$ the following statements are equivalent:
\begin{enumerate}
\item[(i)] $\calb$ is topologically bounded.

\item[(ii)] $\calb$ is bounded in probability.
\end{enumerate}
\end{lemma}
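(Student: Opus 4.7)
The plan is to work directly with the explicit metric $d(X,Y) = \bbe[|X-Y| \wedge 1]$ that induces the topology on $L^0$. Since the sets $U_\delta := \{X \in L^0 : d(X,0) < \delta\}$, $\delta > 0$, form a neighborhood basis of zero, topological boundedness of $\calb$ is equivalent to the statement that for every $\delta > 0$ there exists $\alpha > 0$ with $\bbe[|X/\alpha| \wedge 1] < \delta$ for all $X \in \calb$. The key quantitative link to boundedness in probability is the elementary inequality
\begin{align*}
\bbp(|X| \geq c) \leq \bbe\big[|X/c| \wedge 1\big] \leq \tfrac{c'}{c} + \bbp(|X| \geq c'),
\end{align*}
valid for all $c, c' > 0$, obtained by splitting the expectation according to whether $|X| < c'$ or $|X| \geq c'$, together with Markov's inequality (recall that $\bbI_{\{|Y| \geq 1\}} \leq |Y| \wedge 1$).

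For the implication (i) $\Rightarrow$ (ii), I would fix $\epsilon > 0$, apply topological boundedness to the neighborhood $U_\epsilon$ to obtain $\alpha > 0$ such that $\bbe[|X/\alpha| \wedge 1] < \epsilon$ for every $X \in \calb$, and then invoke the Markov-type bound above (with $c = \alpha$ applied to $X/\alpha$, equivalently $c := \alpha$ applied to $X$) to conclude $\bbp(|X| \geq \alpha) < \epsilon$ uniformly in $X \in \calb$. Hence $\calb$ is bounded in probability.

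For the reverse implication (ii) $\Rightarrow$ (i), let $\delta > 0$ be given and choose $c > 0$ with $\sup_{X \in \calb} \bbp(|X| \geq c) < \delta/2$; then pick $\alpha > 2c/\delta$. The splitting inequality above yields $\bbe[|X/\alpha| \wedge 1] \leq c/\alpha + \bbp(|X| \geq c) < \delta$ for every $X \in \calb$, so $\calb \subset \alpha U_\delta$, which gives topological boundedness since the $U_\delta$ form a neighborhood basis.

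I do not anticipate any real obstacle: everything reduces to exploiting the explicit metric together with Markov's inequality. The only point that deserves a little care is making sure one restricts to the basic neighborhoods $U_\delta$ (justified by the fact that they form a basis), so that topological boundedness can be translated into a uniform estimate on $\bbe[|X/\alpha| \wedge 1]$.
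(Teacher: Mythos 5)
Your argument is correct. The paper does not actually prove this lemma; it simply cites Exercise A.8.18 in Bichteler's textbook, so your self-contained derivation from the explicit metric $d(X,Y)=\bbe[|X-Y|\wedge 1]$ is a genuinely different (and more informative) route. Both key ingredients check out: the reduction of topological boundedness to the basic neighborhoods $U_\delta$ is legitimate because they form a neighborhood basis of zero and absorption into a basic neighborhood implies absorption into any larger neighborhood; and the two-sided estimate $\bbp(|X|\geq c)\leq \bbe[|X/c|\wedge 1]\leq c'/c+\bbp(|X|\geq c')$ follows exactly as you say from $\bbI_{\{|Y|\geq 1\}}\leq |Y|\wedge 1$ and from splitting the expectation on $\{|X|<c'\}$ versus $\{|X|\geq c'\}$. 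One cosmetic point: in the implication (i) $\Rightarrow$ (ii) your bound gives $\bbp(|X|\geq\alpha)<\epsilon$ for each individual $X\in\calb$, which only yields $\sup_{X\in\calb}\bbp(|X|\geq\alpha)\leq\epsilon$; running the same argument with $U_{\epsilon/2}$ produces the strict inequality demanded by the definition of boundedness in probability. What your explicit proof buys, beyond keeping the paper self-contained, is the quantitative link between the radius of the $L^0$-ball and the truncation level $c$ in the tail bound, which the black-box citation hides.
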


\begin{proof}
See, for example Exercise A.8.18 on page 451 in \cite{Bichteler}.
\end{proof}

\begin{proposition}\label{prop-L0-ass-fulfilled}
$(L^0,\leq)$ admits nontrivial minimal elements for unbounded, convex and semi-solid subsets of $L_+^0$.
\end{proposition}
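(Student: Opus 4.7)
The plan combines Lemma~\ref{lemma-bounded-in-prob}, a Komlós-type almost-sure convergence theorem for convex combinations in $L_+^0$ (in the spirit of Brannath--Schachermayer / von Weizs\"{a}cker), and Egorov's theorem. Let $\calb \subset L_+^0$ be an unbounded, convex and semi-solid subset. By Lemma~\ref{lemma-bounded-in-prob}, $\calb$ fails to be bounded in probability, so there exist $\epsilon > 0$ and $(x_n) \subset \calb$ with $\bbp(x_n \geq n) \geq \epsilon$ for every $n \in \bbn$. Setting $A_n := \{x_n \geq n\}$, both $\mathbbm{1}_{A_n} \leq x_n$ and $n\mathbbm{1}_{A_n} \leq x_n$, so semi-solidity places $\mathbbm{1}_{A_n}$ \emph{and} $n\mathbbm{1}_{A_n}$ in $\calb$ for every $n$.

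Next I would apply the Komlós-type theorem to the bounded sequence $(\mathbbm{1}_{A_n}) \subset [0,1]$ to obtain finite convex combinations
\begin{align*}
h_n = \sum_{k \in J_n} \lambda_k^n \mathbbm{1}_{A_k}, \qquad J_n \subset \{n, n+1, \dots\} \text{ finite}, \quad \lambda_k^n \geq 0, \quad \textstyle\sum_k \lambda_k^n = 1,
\end{align*}
such that $h_n \to h$ almost surely for some $h \in [0,1]$. By convexity $h_n \in \calb$, and bounded convergence gives $\bbe[h] \geq \epsilon$, so $\bbp(\{h \geq \eta\}) > 0$ for some $\eta > 0$. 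The crucial rescaling step sets $\alpha_n := \sum_{k \in J_n} \lambda_k^n/k$; since $k \geq n$ in $J_n$, we have $\alpha_n \leq 1/n \to 0$, and
\begin{align*}
y_n := \frac{h_n}{\alpha_n} = \sum_{k \in J_n} \frac{\lambda_k^n/k}{\alpha_n}\, (k\mathbbm{1}_{A_k})
\end{align*}
is a genuine convex combination of the elements $k\mathbbm{1}_{A_k} \in \calb$, so $y_n \in \calb$ and $\alpha_n y_n = h_n$.

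To conclude, Egorov's theorem supplies a set $F \subset \{h \geq \eta\}$ with $\bbp(F) > 0$ on which $h_n \to h$ uniformly; hence $h_n \geq \eta/2$ on $F$ for all $n \geq N$ and some $N \in \bbn$. Setting $y := (\eta/2)\mathbbm{1}_F \in L_+^0 \setminus \{0\}$, we get $y \leq \alpha_n y_n$ for every $n \geq N$; semi-solidity yields $y \in \calb$ via $y \leq \alpha_N y_N = h_N \in \calb$, and passing to a subsequence of $(\alpha_n)_{n \geq N}$ that tends monotonically to zero supplies the data required in the definition. The hardest part is the rescaling trick: we need the same underlying data to sit in $\calb$ in two different scalings simultaneously. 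Semi-solidity makes this possible, placing into $\calb$ both the bounded truncation $\mathbbm{1}_{A_n}$ (which lets Komlós extract an a.s.-convergent convex combination $h_n$) and the blow-up $n\mathbbm{1}_{A_n}$ (which, when used to rebuild $h_n$ as $\alpha_n y_n$, produces the vanishing factor $\alpha_n$).
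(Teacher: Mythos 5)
Your proof is correct, but it takes a genuinely different route from the paper's. The paper follows \cite[Prop.~1.2]{Kardaras-10b}: it invokes \cite[Lemma 2.3]{B-Schach} to produce a single event $\Omega_u$ of positive probability on which elements of $\calb$ can be made uniformly large outside arbitrarily small exceptional sets, and then a Borel--Cantelli-type estimate shows that the intersection $A = \bigcap_n A_n$ of the sets $A_n = \Omega_u \cap \{X_n \geq \alpha_n^{-1}\}$ keeps positive probability, so $\bbI_A$ is the nontrivial minimal element. You instead start from a raw witness of unboundedness ($\bbp(x_n \geq n) \geq \epsilon$), use semi-solidity to place both $\bbI_{A_n}$ and $n\bbI_{A_n}$ in $\calb$, extract forward convex combinations $h_n$ of the indicators converging almost surely (a von Weizs\"{a}cker/Koml\'os-type tool the paper uses elsewhere anyway), and then your rescaling $\alpha_n = \sum_{k}\lambda_k^n/k \leq 1/n$ rewrites $h_n$ as $\alpha_n y_n$ with $y_n \in \calb$ a genuine convex combination of the blow-ups $k\bbI_{A_k}$ --- the identity $\sum_k \frac{\lambda_k^n/k}{\alpha_n} = 1$ is exactly what makes this work --- after which Egorov furnishes the minimal element $(\eta/2)\bbI_F$. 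All remaining steps (dominated convergence giving $\bbe[h] \geq \epsilon$, extraction of a monotone null subsequence of $(\alpha_n)$) are routine. Two cosmetic points: unboundedness in probability literally yields $\sup_{X \in \calb}\bbp(X \geq n) \geq \epsilon$, so you should pick $x_n$ with, say, $\bbp(x_n \geq n) \geq \epsilon/2$; and you should name the forward-convex-combinations lemma you invoke (e.g.\ Delbaen--Schachermayer's Lemma A1.1), since plain Koml\'os gives Ces\`{a}ro means of a subsequence rather than combinations supported on $\{n, n+1, \dots\}$. The paper's route buys brevity at the cost of importing the nontrivial exhaustion argument of \cite[Lemma 2.3]{B-Schach}; yours is more self-contained, with the rescaling trick as its genuinely new ingredient.
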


\begin{proof}
We follow the proof of \cite[Prop. 1.2]{Kardaras-10b} rather closely. Let $\calb \subset L_+^0$ be an unbounded, convex and semi-solid subset. By \cite[Lemma 2.3]{B-Schach} there exists an event $\Omega_u \in \calg$ with $\bbp(\Omega_u) > 0$ such that for each $\epsilon > 0$ there exists $X \in \calb$ with
\begin{align*}
\bbp( \Omega_u \cap \{ X < \epsilon^{-1} \} ) < \epsilon.
\end{align*}
We define the sequence $(\alpha_n)_{n \in \bbn} \subset (0,\infty)$ as
\begin{align*}
\alpha_n := \frac{\bbp(\Omega_u)}{2^{n+1}} \quad \text{for each $n \in \bbn$.}
\end{align*}
Then we have $\alpha_n \downarrow 0$, and there is a sequence $(X_n)_{n \in \bbn} \subset \calb$ such that
\begin{align*}
\bbp( \Omega_u \cap \{ X_n < \alpha_n^{-1} \} ) < \alpha_n \quad \text{for each $n \in \bbn$.}
\end{align*}
We define the sequence $(A_n)_{n \in \bbn} \subset \calg$ as $A_n := \Omega_u \cap \{ X_n \geq \alpha_n^{-1} \}$. We set $A := \bigcap_{n \in \bbn} A_n \in \calg$ and define $X \in L_+^0$ as $X := \bbI_A$. Then we have
\begin{align*}
0 \leq X = \bbI_A \leq \bbI_{A_n} \leq \alpha_n X_n \quad \text{for each $n \in \bbn$.}
\end{align*}
In particular, since $\calb$ is semi-solid, we have $X \in \calb$. Furthermore, we have
\begin{align*}
\bbp(\Omega_u \setminus A) &= \bbp \bigg( \bigcup_{n \in \bbn} (\Omega_u \setminus A_n) \bigg) \leq \sum_{n \in \bbn} \bbp(\Omega_u \setminus A_n) = \sum_{n \in \bbn} \bbp ( \Omega_u \cap \{ X_n < \alpha_n^{-1} \} )
\\ &< \sum_{n \in \bbn} \alpha_n = \sum_{n \in \bbn} \frac{\bbp(\Omega_u)}{2^{n+1}} = \frac{\bbp(\Omega_u)}{2},
\end{align*}
and hence $\bbp(A) > 0$, showing that $X \in \calb \setminus \{ 0 \}$.
\end{proof}

As in Section \ref{sec-NA-tvs}, let $\calk_0 \subset L^0$ be a subset such that Assumption \ref{ass-convex-0} is fulfilled; that is, $\calk_0$ is a convex cone. As already mentioned, we may think of outcomes of trading strategies with initial value zero. Then we can define the concept NA as in Section \ref{sec-NA-tvs}. In order to introduce further concepts in the present setting, we fix some $p \in [1,\infty]$. Note that the space $L^p$ is an ideal which is dense in $L^0$.

\begin{definition}
We introduce the following concepts:
\begin{enumerate}
\item $\calk_0$ satisfies \emph{NFL$_p$ (No Free Lunch with respect to $L^p$)} if it satisfies NFL$_{\tau_1}$, where $\tau_1$ is the weak-$^*$ topology on $L^p$ with respect to $L^q$ and $q \in [1,\infty]$ is such that $\frac{1}{p} + \frac{1}{q} = 1$.

\item $\calk_0$ satisfies \emph{NFLBR$_p$ (No Free Lunch with Bounded Risk with respect to $L^p$)} if it satisfies NFL$_{\tau_2}$, where $\tau_2$ is the sequential weak-$^*$ topology on $L^p$ with respect to $L^q$ and $q \in [1,\infty]$ is such that $\frac{1}{p} + \frac{1}{q} = 1$.

\item $\calk_0$ satisfies \emph{NFLVR$_p$ (No Free Lunch with Vanishing Risk with respect to $L^p$)} if it satisfies NFL$_{\tau_3}$, where $\tau_3$ is the norm topology on $L^p$.
\end{enumerate}
\end{definition}

In case $p = \infty$ we agree to write NFLVR, NFLBR and NFL rather than NFLVR$_{\infty}$, NFLBR$_{\infty}$ and NFL$_\infty$. These are the well-known no-arbitrage concepts which are widely used in the literature; see for example \cite{DS-book} or \cite{Kabanov}.

\begin{proposition}
We have the implications (i) $\Rightarrow$ (ii) $\Rightarrow$ (iii) $\Rightarrow$ (iv), where:
\begin{enumerate}
\item[(i)] $\calk_0$ satisfies NFL$_p$.

\item[(ii)] $\calk_0$ satisfies NFLBR$_p$.

\item[(iii)] $\calk_0$ satisfies NFLVR$_p$.

\item[(iv)] $\calk_0$ satisfies NA.
\end{enumerate}
\end{proposition}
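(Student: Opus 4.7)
The plan is to derive everything from two earlier results: Proposition \ref{prop-NFL-tau} (a coarser topology satisfying NFL forces the finer one to satisfy NFL as well) and Proposition \ref{prop-NFL-NA} (NFL$_\tau$ always implies NA). So the task reduces to verifying the chain of topology inclusions $\tau_1 \subset \tau_2 \subset \tau_3$ on $L^p$.

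For $\tau_1 \subset \tau_2$: by definition $\tau_2$ has as closed sets exactly the sequentially $\tau_1$-closed subsets of $L^p$. Since every $\tau_1$-closed set contains all $\tau_1$-limits of its sequences, it is in particular sequentially $\tau_1$-closed. Hence every $\tau_1$-closed set is $\tau_2$-closed, which gives $\tau_1 \subset \tau_2$. Applying Proposition \ref{prop-NFL-tau} yields (i) $\Rightarrow$ (ii).

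For $\tau_2 \subset \tau_3$: if $(x_n)_{n \in \bbn} \subset L^p$ converges to $x$ in the norm topology $\tau_3$, then for every $y \in L^q$ we have $\la x_n, y \ra \to \la x, y \ra$ by H\"older's inequality, so $x_n \to x$ in $\tau_1$. Consequently, every sequentially $\tau_1$-closed set is $\tau_3$-closed, i.e.\ every $\tau_2$-closed set is $\tau_3$-closed, giving $\tau_2 \subset \tau_3$. A second application of Proposition \ref{prop-NFL-tau} yields (ii) $\Rightarrow$ (iii). Finally, (iii) $\Rightarrow$ (iv) is Proposition \ref{prop-NFL-NA}.

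There is no real obstacle: the argument is purely a bookkeeping exercise comparing the three topologies, with the only subtle point being the correct direction of the inclusions (finer topology $=$ more closed sets $=$ smaller closure $=$ stronger no-arbitrage condition), which is exactly what Proposition \ref{prop-NFL-tau} is designed to handle.
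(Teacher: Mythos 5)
Your proposal is correct and follows exactly the paper's route: the paper's proof is the one-line observation that $\tau_1 \subset \tau_2 \subset \tau_3$ combined with Propositions \ref{prop-NFL-tau} and \ref{prop-NFL-NA}. Your verification of the two topology inclusions (every $\tau_1$-closed set is sequentially $\tau_1$-closed, and norm convergence implies weak-$^*$ convergence via H\"older) simply makes explicit what the paper leaves as immediate.
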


\begin{proof}
Since $\tau_1 \subset \tau_2 \subset \tau_3$, this is an immediate consequence of Propositions \ref{prop-NFL-tau} and \ref{prop-NFL-NA}.
\end{proof}

Recall that the convex cone $\calc \subset L^p$ is given by
\begin{align*}
\calc = (\calk_0 - L_+^0) \cap L^p.
\end{align*}

\begin{corollary}\label{cor-NFLVR-NA-1}
Suppose that $\calc$ is closed in $L^p$ with respect to $\| \cdot \|_{L^p}$. Then the following statements are equivalent:
\begin{enumerate}
\item[(i)] $\calk_0$ satisfies NFLVR$_p$.

\item[(ii)] $\calk_0$ satisfies NA.
\end{enumerate}
\end{corollary}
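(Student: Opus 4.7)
The plan is to derive this directly from Corollary \ref{cor-NFL-NA} by identifying the abstract data $(U,\tau)$ with the concrete data $(L^p, \|\cdot\|_{L^p})$. In the notation of Section \ref{sec-NA-tvs}, we take $V = L^0$, the ideal $U = L^p \subset L^0$ (which is indeed an ideal, and dense in $L^0$ with respect to convergence in probability), and the topology $\tau = \tau_3$ on $L^p$ generated by the $L^p$-norm. With these identifications, the convex cone $\calc = (\calk_0 - L_+^0) \cap L^p$ of this section coincides with the cone $\calc = (\calk_0 - V_+) \cap U$ of Section \ref{sec-NA-tvs}, and NFLVR$_p$ is by definition NFL$_{\tau_3}$.

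Once the identification is in place, the hypothesis that $\calc$ is closed in $L^p$ with respect to $\|\cdot\|_{L^p}$ is exactly the hypothesis ``$\calc$ is closed in $U$ with respect to $\tau$'' of Corollary \ref{cor-NFL-NA}. Applying that corollary immediately yields the equivalence of NFL$_{\tau_3}$ and NA, i.e.\ the equivalence of NFLVR$_p$ and NA.

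There is essentially no obstacle: the whole content of the corollary has already been packaged in Corollary \ref{cor-NFL-NA}, and the only thing to verify is that $L^p$ genuinely fits the framework (it is a dense ideal in $L^0$, and its norm topology is a Hausdorff vector topology), which is standard. The proof will therefore be a one-line invocation of Corollary \ref{cor-NFL-NA}.
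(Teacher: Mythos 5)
Your proposal is correct and matches the paper's own proof exactly: the paper also derives this as an immediate consequence of Corollary \ref{cor-NFL-NA}, having already noted in Section \ref{sec-random-var} that $L^p$ is a dense ideal in $L^0$. Nothing further is needed.
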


\begin{proof}
This is an immediate consequence of Corollary \ref{cor-NFL-NA}.
\end{proof}

\begin{corollary}\label{cor-NFLVR-NA-2}
Suppose that $\calk_0 - L_+^0$ is closed in $L^0$. Then the following statements are equivalent:
\begin{enumerate}
\item[(i)] $\calk_0$ satisfies NFLVR$_p$.

\item[(ii)] $\calk_0$ satisfies NA.
\end{enumerate}
\end{corollary}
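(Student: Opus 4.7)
The plan is to reduce this directly to Corollary \ref{cor-NFL-NA-2}, which has already established the general statement in a topological vector lattice $V$ with dense ideal $U$ and topology $\tau$ on $U$, under the hypothesis that $\calk_0 - V_+$ is closed in $V$ and that $\sigma \cap U \subset \tau$, where $\sigma$ denotes the topology on $V$. I would instantiate $V = L^0$ equipped with the topology $\sigma$ of convergence in probability, $U = L^p$ (which is an ideal dense in $L^0$), and $\tau = \tau_3$ the norm topology on $L^p$. The hypothesis that $\calk_0 - L_+^0$ is closed in $L^0$ is exactly the closedness assumption needed.

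The single nontrivial point in the reduction is the verification that $\sigma \cap L^p \subset \tau_3$, i.e.\ that the subspace topology which $L^p$ inherits from $L^0$ is coarser than the $L^p$-norm topology. This amounts to showing that $L^p$-norm convergence implies convergence in probability. For $p \in [1,\infty)$ this is the standard consequence of Markov's inequality: for $X_n, X \in L^p$ with $\| X_n - X \|_{L^p} \to 0$ and any $\varepsilon > 0$,
\begin{equation*}
\bbp(|X_n - X| \geq \varepsilon) \leq \varepsilon^{-p} \, \bbe[|X_n - X|^p] \to 0.
\end{equation*}
For $p = \infty$ it is even more immediate, since $|X_n - X| \leq \| X_n - X \|_{L^\infty}$ almost surely. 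Hence the identity map $(L^p, \tau_3) \to (L^p, \sigma \cap L^p)$ is continuous, so $\sigma \cap L^p \subset \tau_3$.

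With both hypotheses of Corollary \ref{cor-NFL-NA-2} verified, the equivalence of NFLVR$_p$ (which is by definition NFL$_{\tau_3}$) and NA follows. No real obstacle arises; the content is essentially unpacking the definition of NFLVR$_p$ and recognising that the $L^p$-norm dominates the probability metric.
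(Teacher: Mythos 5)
Your proposal is correct and follows exactly the paper's route: the paper also deduces the corollary from Corollary \ref{cor-NFL-NA-2} by noting that $\| X_n - X \|_{L^p} \to 0$ implies $X_n \overset{\bbp}{\to} X$, which is precisely your verification that $\sigma \cap L^p \subset \tau_3$. You have merely spelled out the Markov-inequality (resp.\ $L^\infty$-domination) argument that the paper leaves implicit.
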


\begin{proof}
Since $\| X_n - X \|_{L^p} \to 0$ implies $X_n \overset{\bbp}{\to} X$, this is a consequence of Corollary \ref{cor-NFL-NA-2}.
\end{proof}

Now, let $(\calk_{\alpha})_{\alpha > 0}$ be a family of subsets of $L_+^0$ such that Assumption \ref{ass-convex-1} is fulfilled. As already mentioned, we may think of the outcomes of trading strategies with initial value $\alpha$. Recall that we had defined the family $(\calb_{\alpha})_{\alpha > 0}$ of convex, semi-solid subsets of $L_+^0$ as
\begin{align*}
\calb_{\alpha} := ( \calk_{\alpha} - L_+^0 ) \cap L_+^0, \quad \alpha > 0,
\end{align*}
and that we have set $\calb := \calb_1$. In Definition \ref{def-NA-1-concepts} we had defined the concepts NUPBR, NAA$_1$ and NA$_1$. Lemma \ref{lemma-bounded-in-prob} shows that NUPBR corresponds to the well-known respective concept that is usually used in the finance literature. By Remark \ref{rem-NA-1} the concept NA$_1$ corresponds to the respective concept that is usually used in the finance literature. The following result shows that also NAA$_1$ corresponds to the well-known respective concept that is usually used in the finance literature; cf. for example \cite{KKS}.

\begin{lemma}
The following statements are equivalent:
\begin{enumerate}
\item[(i)] $\calk_1$ satisfies NAA$_1$.

\item[(ii)] For each sequence $(\alpha_n)_{n \in \bbn} \subset (0,\infty)$ with $\alpha_n \downarrow 0$ and every sequence $(X_n)_{n \in \bbn} \subset L_+^0$ with $X_n \in \calb_{\alpha_n}$ for each $n \in \bbn$ we have
\begin{align*}
X_n \overset{\bbp}{\to} 0.
\end{align*}

\item[(iii)] For each sequence $(\alpha_n)_{n \in \bbn} \subset (0,\infty)$ with $\alpha_n \downarrow 0$ and every sequence $(X_n)_{n \in \bbn} \subset L_+^0$ with $X_n \in \calb_{\alpha_n}$ for each $n \in \bbn$ we have
\begin{align*}
\lim_{n \to \infty} \bbp(X_n \geq 1) = 0.
\end{align*}
\end{enumerate}
\end{lemma}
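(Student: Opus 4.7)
The plan is to first observe that the equivalence (i) $\Leftrightarrow$ (ii) is essentially already contained in Remark \ref{rem-NAA-1}: that remark, which follows from Lemma \ref{lemma-bounded-pos-cone}, says that NAA$_1$ is equivalent to the statement that $X_n \to 0$ in $V = L^0$ whenever $\alpha_n \downarrow 0$ and $X_n \in \calb_{\alpha_n}$. The only thing left to do here is to identify convergence in the Fréchet-lattice topology of $L^0$ with convergence in probability, which was recorded at the start of Section \ref{sec-random-var} (via the metric $d$ of \eqref{metric}). So I would simply cite Remark \ref{rem-NAA-1} and this identification.

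For (ii) $\Rightarrow$ (iii): this is the immediate direction. Given sequences as in (iii), the nonnegative $X_n$ satisfy $X_n \overset{\bbp}{\to} 0$ by (ii), and convergence in probability gives $\bbp(X_n \geq 1) = \bbp(|X_n| \geq 1) \to 0$.

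The substantive direction is (iii) $\Rightarrow$ (ii). The key tool is the scaling property $\calb_\alpha = \alpha \calb$ from Lemma \ref{lemma-B-alpha-prod}. Fix $\epsilon > 0$ and take $\alpha_n \downarrow 0$ together with $X_n \in \calb_{\alpha_n}$. Set $Y_n := \epsilon^{-1} X_n$ and $\beta_n := \epsilon^{-1}\alpha_n$. Then by Lemma \ref{lemma-B-alpha-prod},
\begin{align*}
Y_n = \epsilon^{-1} X_n \in \epsilon^{-1} \calb_{\alpha_n} = \epsilon^{-1} \alpha_n \calb = \beta_n \calb = \calb_{\beta_n},
\end{align*}
and $\beta_n \downarrow 0$. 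Applying hypothesis (iii) to the sequences $(\beta_n)$ and $(Y_n)$ yields $\bbp(Y_n \geq 1) \to 0$, i.e.\ $\bbp(X_n \geq \epsilon) \to 0$. Since $\epsilon > 0$ was arbitrary and $X_n \geq 0$, this means $X_n \overset{\bbp}{\to} 0$, giving (ii).

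There is no real obstacle here; the argument is short and rests entirely on two ingredients that have already been established, namely the cone structure $\calb_\alpha = \alpha\calb$ (Lemma \ref{lemma-B-alpha-prod}) and the characterization of sequential boundedness on the positive cone (Lemma \ref{lemma-bounded-pos-cone}, used via Remark \ref{rem-NAA-1}). The only minor subtlety worth flagging in the write-up is that threshold $1$ in (iii) is not special: any fixed positive threshold would do, and the scaling trick in (iii) $\Rightarrow$ (ii) is precisely what turns the single threshold $1$ into arbitrary $\epsilon > 0$.
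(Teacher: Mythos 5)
Your proposal is correct and follows essentially the same route as the paper: the equivalence (i) $\Leftrightarrow$ (ii) via Remark \ref{rem-NAA-1} together with the identification of the $L^0$-topology with convergence in probability, the trivial implication (ii) $\Rightarrow$ (iii), and the scaling substitution $Y_n = X_n/\epsilon$, $\beta_n = \alpha_n/\epsilon$ for (iii) $\Rightarrow$ (ii). Your explicit appeal to Lemma \ref{lemma-B-alpha-prod} to justify $Y_n \in \calb_{\beta_n}$ is a small but welcome addition that the paper leaves implicit.
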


\begin{proof}
(i) $\Leftrightarrow$ (ii): See Remark \ref{rem-NAA-1}.

\noindent(ii) $\Rightarrow$ (iii): This implication is obvious.

\noindent(iii) $\Rightarrow$ (ii): Let $\epsilon > 0$ be arbitrary. We set $Y_n := X_n / \epsilon$ and $\beta_n := \alpha_n / \epsilon$ for each $n \in \bbn$. Then we have $\beta_n \downarrow 0$ and $Y_n \in \calb_{\beta_n}$ for each $n \in \bbn$ as well as
\begin{align*}
\bbp(X_n \geq \epsilon) = \bbp(Y_n \geq 1) \to 0.
\end{align*}
Since $\epsilon > 0$ was arbitrary, this shows $X_n \overset{\bbp}{\to} 0$.
\end{proof}

For the proof of the upcoming Theorem \ref{thm-NA-concepts-L0} we require the following auxiliary result.

\begin{lemma}\label{lemma-DS-ineqn}
Let $(X_n)_{n \in \bbn} \subset L_+^0$ be a sequence such that for some $\epsilon > 0$ we have
\begin{align*}
\bbp(X_n \geq n) \geq \epsilon \quad \text{for each $n \in \bbn$.}
\end{align*}
Then for each subsequence $(X_{n_k})_{k \in \bbn}$ there exists a sequence $(a_k)_{k \in \bbn} \subset (0,\infty)$ with $a_k \to \infty$ such that
\begin{align}\label{ineqn-DS}
\bbp \bigg( \frac{1}{k} \sum_{l=1}^k X_{n_l} \geq a_k \bigg) \geq \frac{\epsilon}{2} \quad \text{for each $k \in \bbn$.}
\end{align}
\end{lemma}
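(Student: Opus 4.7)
The plan is to translate the individual tail bounds $\bbp(X_{n_l}\geq n_l)\geq\epsilon$ into a single bound on the Ces\`aro sum by counting how many of the events $B_l := \{X_{n_l}\geq n_l\}$ occur among the first $k$ indices. Two ingredients drive the argument: a Paley--Zygmund style splitting applied to the count $N_k := \sum_{l=1}^k \bbI_{B_l}$, and the elementary observation that $(n_l)_{l\in\bbn}$, being a strictly increasing sequence of natural numbers, satisfies $n_l\geq l$ for every $l$.

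First, I would fix the subsequence $(X_{n_l})_{l\in\bbn}$ and observe that $\bbe[N_k]\geq k\epsilon$ by hypothesis. Since $N_k\leq k$ pointwise, splitting the expectation along $\{N_k\geq k\epsilon/2\}$ and its complement gives
\[
k\epsilon \;\leq\; \bbe[N_k] \;\leq\; k\,\bbp(N_k\geq k\epsilon/2) + \tfrac{k\epsilon}{2},
\]
which rearranges to $\bbp(N_k\geq k\epsilon/2)\geq\epsilon/2$; notably, no independence of the $X_{n_l}$ is required.

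Next, I would work pathwise on the event $\{N_k\geq k\epsilon/2\}$. There the random subset $I:=\{l\leq k : B_l\text{ occurs}\}$ has cardinality at least $\lceil k\epsilon/2\rceil$, and on each $B_l$ one has $X_{n_l}\geq n_l\geq l$. Using nonnegativity of the remaining terms together with the fact that $\sum_{l\in J}l$ is minimised over subsets $J\subset\{1,\ldots,k\}$ of prescribed cardinality when $J$ is the corresponding initial segment of $\{1,\ldots,k\}$, I would obtain
\[
\frac{1}{k}\sum_{l=1}^k X_{n_l} \;\geq\; \frac{1}{k}\sum_{l\in I} l \;\geq\; \frac{1}{k}\sum_{l=1}^{\lceil k\epsilon/2\rceil} l \;=\; \frac{\lceil k\epsilon/2\rceil(\lceil k\epsilon/2\rceil+1)}{2k}.
\]
Setting $a_k$ equal to this explicit quantity then yields (\ref{ineqn-DS}), and since $a_k\sim k\epsilon^2/8$ one has $a_k\to\infty$.

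The crucial step is the pathwise lower bound above: the subsequence property $n_l\geq l$ is precisely what turns ``a positive fraction of the $B_l$ occur'' into ``the Ces\`aro average grows without bound in $k$''. Without it, the same counting argument would give only a constant lower bound and the conclusion $a_k\to\infty$ would fail.
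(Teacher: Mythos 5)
Your proof is correct, and it takes a somewhat different route from the paper's. The paper invokes \cite[Lemma 9.8.6]{DS-book} as a black box: that lemma directly yields $\bbp\big(\frac{1}{k}\sum_{l=1}^k X_{n_l} \geq \frac{\delta\epsilon}{k}\sum_{l=1}^k n_l\big) \geq (1-\delta)\epsilon$, after which one sets $\delta=\tfrac12$ and uses $n_l \geq l$ to see that $a_k := \frac{\epsilon}{2k}\sum_{l=1}^k n_l \geq \frac{\epsilon(k+1)}{4} \to \infty$. You instead reprove the needed concentration from scratch: your first-moment splitting of $\bbe[N_k]$ for the unweighted count $N_k=\sum_{l=1}^k \bbI_{\{X_{n_l}\geq n_l\}}$ (bounded by $k$) correctly gives $\bbp(N_k \geq k\epsilon/2)\geq \epsilon/2$ without any independence, and the rearrangement observation that $\sum_{l\in J} l$ over $|J|=m$ is minimised by the initial segment turns this into the pathwise bound $\frac{1}{k}\sum_{l=1}^k X_{n_l} \geq \frac{\lceil k\epsilon/2\rceil(\lceil k\epsilon/2\rceil+1)}{2k} \sim \frac{k\epsilon^2}{8}$. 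The Delbaen--Schachermayer lemma is itself proved by the same kind of argument applied to the weighted count $\sum_l n_l\,\bbI_{\{X_{n_l}\geq n_l\}}$, which is why the paper's $a_k$ grows like $\epsilon k$ whereas yours grows like $\epsilon^2 k$; the constant is irrelevant here, since only $a_k\to\infty$ is needed. Both arguments hinge on the same key fact you identify, namely $n_l\geq l$ for a subsequence. The upshot of your version is that it is self-contained and makes transparent exactly where the hypothesis enters; the upshot of the paper's is brevity via the citation.
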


\begin{proof}
Let $k \in \bbn$ be arbitrary. Then we have
\begin{align*}
\bbp \bigg( \frac{X_{n_l}}{k} \geq \frac{n_l}{k} \bigg) \geq \epsilon \quad \text{for all $l = 1,\ldots,k$.}
\end{align*}
Therefore, by \cite[Lemma 9.8.6]{DS-book} for each $\delta \in (0,1)$ we have
\begin{align*}
\bbp \bigg( \frac{1}{k} \sum_{l=1}^k X_{n_l} \geq \frac{\delta \epsilon}{k} \sum_{l=1}^k n_l \bigg) \geq (1-\delta) \epsilon.
\end{align*}
Now, we set $\delta := \frac{1}{2}$ and define the sequence $(a_k)_{k \in \bbn} \subset (0,\infty)$ as
\begin{align*}
a_k := \frac{\delta \epsilon}{k} \sum_{l=1}^k n_l \quad \text{for each $k \in \bbn$.}
\end{align*}
Then we have (\ref{ineqn-DS}) and
\begin{align*}
a_k \geq \frac{\delta \epsilon}{k} \sum_{l=1}^k l = \frac{\delta \epsilon}{k} \frac{k(k+1)}{2} = \frac{\delta \epsilon (k+1)}{2} \to \infty \quad \text{for $k \to \infty$},
\end{align*}
completing the proof.
\end{proof}

We say that the subset $\calb$ is \emph{$L^1(\bbq)$-bounded} for some equivalent probability measure $\bbq \approx \bbp$ on $(\Omega,\calg)$ if
\begin{align*}
\sup_{X \in \calb} \bbe_{\bbq}[X] < \infty.
\end{align*}
Furthermore, we say that the convex subset $\calb$ has the \emph{Banach Saks property} with respect to almost sure convergence (convergence in probability) if every sequence $(X_n)_{n \in \bbn} \subset \calb$ has a subsequence $(X_{n_k})_{k \in \bbn}$ which is almost surely Ces\`{a}ro convergent (Ces\`{a}ro convergent in probability) to a finite nonnegative random variable $X \in L_+^0$.
Similarly, we say that the convex subset $\calb$ has the permutation invariant (subsequence invariant) \emph{von Weizs\"{a}cker property} with respect to almost sure convergence (convergence in probability) if for every sequence $(X_n)_{n \in \bbn} \subset \calb$ there exist a subsequence $(X_{n_k})_{k \in \bbn}$ and a finite nonnegative random variable $X \in L_+^0$ such that for every permutation $\pi : \bbn \to \bbn$ the sequence $(X_{n_{\pi(k)}})_{k \in \bbn}$ (for every further subsequence $(n_{k_l})_{l \in \bbn}$ the sequence $(X_{n_{k_l}})_{l \in \bbn}$) is almost surely Ces\`{a}ro convergent (Ces\`{a}ro convergent in probability) to $X$.

\begin{theorem}\label{thm-NA-concepts-L0}
The following statements are equivalent:
\begin{enumerate}
\item[(i)] $\calk_1$ satisfies NUPBR.

\item[(ii)] $\calk_1$ satisfies NAA$_1$.

\item[(iii)] $\calk_1$ satisfies NA$_1$.

\item[(iv)] We have $\bigcap_{\alpha > 0} \calb_{\alpha} = \{ 0 \}$.

\item[(v)] There exists an equivalent probability measure $\bbq \approx \bbp$ such that $\calb$ is $L^1(\bbq)$-bounded.

\item[(vi)] There exists an equivalent probability measure $\bbq \approx \bbp$ with bounded Radon-Nikodym derivative $\frac{d \bbq}{d \bbp}$ such that $\calb$ is $L^1(\bbq)$-bounded.

\item[(vii)] $\calb$ has the permutation invariant von Weizs\"{a}cker property with respect to almost sure convergence.

\item[(viii)] $\calb$ has the permutation invariant von Weizs\"{a}cker property with respect to convergence in probability.

\item[(ix)] $\calb$ has the subsequence invariant von Weizs\"{a}cker property with respect to almost sure convergence.

\item[(x)] $\calb$ has the subsequence invariant von Weizs\"{a}cker property with respect to convergence in probability.

\item[(xi)] $\calb$ has the Banach Saks property with respect to almost sure convergence.

\item[(xii)] $\calb$ has the Banach Saks property with respect to convergence in probability.

\item[(xiii)] For every sequence $(X_n)_{n \in \bbn} \subset \calb$ there exist a subsequence $(X_{n_k})_{k \in \bbn}$ and a probability measure $\mu$ on $(\bbr_+,\calb(\bbr_+))$ such that $\bbp \circ X_{n_k} \overset{w}{\to} \mu$ for $k \to \infty$.
\end{enumerate}
\end{theorem}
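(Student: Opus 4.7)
The plan is to organize the thirteen conditions into a few natural groups and chain the equivalences accordingly. The backbone is Theorem \ref{thm-NA-concepts} combined with Proposition \ref{prop-L0-ass-fulfilled}: because $(L^0,\leq)$ admits nontrivial minimal elements for unbounded, convex and semi-solid subsets of $L_+^0$, and $\calb$ is convex and semi-solid, the equivalences (i) $\Leftrightarrow$ (ii) $\Leftrightarrow$ (iii) $\Leftrightarrow$ (iv) are immediate. Throughout, Lemma \ref{lemma-bounded-in-prob} lets us replace "topologically bounded" by "bounded in probability," which will be the hub condition against which the remaining items are matched.

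For the $L^1(\bbq)$-type conditions, the equivalence (i) $\Leftrightarrow$ (vi) is a classical convex-set argument: from boundedness in probability of a convex set $\calb \subset L_+^0$, one builds a bounded density $g = d\bbq/d\bbp$ with $\sup_{X \in \calb} \bbe[gX] < \infty$ by a Halmos--Savage/Yan exhaustion on convex combinations of normalized elements of $\calb$ (this is the convexity-crucial step, and is the argument used, e.g., in \cite{B-Schach} and \cite{Kardaras-Z-13}). The implication (vi) $\Rightarrow$ (v) is trivial, and (v) $\Rightarrow$ (i) follows from Markov's inequality under $\bbq$ combined with $\bbq \sim \bbp$ (Markov gives boundedness in $\bbq$-probability, which is equivalent to $\bbp$-boundedness in probability since convergence in probability under equivalent measures coincides on bounded families). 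For (xiii) $\Leftrightarrow$ (i), I would invoke Prokhorov's theorem: tightness of the family of laws $\{\bbp \circ X^{-1} : X \in \calb\}$ on $\bbr_+$ is precisely boundedness in probability, and tightness is equivalent to sequential weak relative compactness of distributions.

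The deepest group is (vii)--(xii). The forward direction (i) $\Rightarrow$ (vii) and (i) $\Rightarrow$ (ix) is obtained by invoking the von Weizsäcker strengthening of Komlós's theorem \cite{Weizsaecker}: every sequence in $\calb$, being bounded in probability, has a subsequence $(X_{n_k})$ and a finite limit $X \in L_+^0$ such that the Cesàro means of every further subsequence, and of every permutation, converge to $X$ almost surely. From (vii) and (ix) the weaker in-probability versions (viii), (x) follow trivially, and the Banach--Saks statements (xi), (xii) are obtained by dropping the permutation/subsequence invariance in (vii), (viii). For the reverse directions it suffices to show that any of (vii)--(xii) implies (i): if $\calb$ were not bounded in probability, then by Lemma \ref{lemma-DS-ineqn} (applied to a suitable sequence $(X_n) \subset \calb$ with $\bbp(X_n \geq n) \geq \varepsilon$) every subsequence has Cesàro means with $\bbp(\frac{1}{k}\sum_{l=1}^k X_{n_l} \geq a_k) \geq \varepsilon/2$ for some $a_k \to \infty$, which rules out Cesàro convergence to a finite random variable. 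This gives the contradiction and closes the loop.

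The main obstacle is the von Weizsäcker portion: one must cleanly cite the permutation-invariant and subsequence-invariant forms of Komlós's theorem rather than prove them from scratch, and one must verify that the Cesàro limit lies in $L_+^0$ (finiteness of $X$), which is precisely where the bounded-in-probability hypothesis enters. Lemma \ref{lemma-DS-ineqn}, which the excerpt already provides and whose role would otherwise be mysterious, is exactly tailored to make the converse "Banach--Saks $\Rightarrow$ bounded in probability" step work, so the overall structure of the proof is really a dictionary between the abstract boundedness notions of Section~\ref{sec-tvs} and the classical measure-theoretic compactness/integrability notions on $L^0$.
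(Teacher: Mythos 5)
Your proposal is correct and follows essentially the same route as the paper: the backbone (i)$\Leftrightarrow$(ii)$\Leftrightarrow$(iii)$\Leftrightarrow$(iv) via Theorem \ref{thm-NA-concepts} and Proposition \ref{prop-L0-ass-fulfilled}, the $L^1(\bbq)$-boundedness group via the convex-set result of \cite{B-Schach}, the von Weizs\"{a}cker/Koml\'{o}s forward directions with finiteness of the Ces\`{a}ro limit supplied by boundedness in probability, the converse via Lemma \ref{lemma-DS-ineqn} applied to the weakest condition (xii), and (xiii) via Prohorov. The only (harmless) deviation is that you prove (v)$\Rightarrow$(i) directly by Markov's inequality plus equivalence of the $d_{\bbp}$- and $d_{\bbq}$-topologies, where the paper simply cites \cite[Prop.\ 1.16]{Kardaras-L0}.
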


\begin{proof}
(i) $\Leftrightarrow$ (ii) $\Leftrightarrow$ (iii) $\Leftrightarrow$ (iv): These equivalences are a consequence of Theorem \ref{thm-NA-concepts} and Proposition \ref{prop-L0-ass-fulfilled}.

\noindent (i) $\Rightarrow$ (vi): Since $\calb$ is convex, this implication follows from \cite[Lemma 2.3(3)]{B-Schach}.

\noindent (vi) $\Rightarrow$ (v): This implication is obvious.

\noindent (v) $\Rightarrow$ (i): Since $\calb$ is convex, this implication follows from \cite[Prop. 1.16]{Kardaras-L0}.

\noindent (i) $\Rightarrow$ (vii): Let $(X_n)_{n \in \bbn} \subset \calb$ be an arbitrary sequence. By the von Weizs\"{a}cker theorem as presented in the original paper \cite{Weizsaecker} there exist a subsequence $(X_{n_k})_{k \in \bbn}$ and a nonnegative random variable $X : \Omega \to [0,\infty]$ such that for every every permutation $\pi : \bbn \to \bbn$ the sequence $(X_{n_{\pi(k)}})_{k \in \bbn}$ is almost surely Ces\`{a}ro convergent to $X$. Since $\calb$ is bounded in probability, we have $X < \infty$ almost surely, that is $X \in L_+^0$; see, for example \cite[Cor. 2.12]{Tappe-W}.

\noindent (i) $\Rightarrow$ (ix): Let $(X_n)_{n \in \bbn} \subset \calb$ be an arbitrary sequence. By the von Weizs\"{a}cker theorem as presented in \cite[Thm. 5.2.3]{Kabanov-Safarian} there exist a subsequence $(X_{n_k})_{k \in \bbn}$ and a nonnegative random variable $X : \Omega \to [0,\infty]$ such that for every further subsequence $(n_{k_l})_{l \in \bbn}$ the sequence $(X_{n_{k_l}})_{l \in \bbn}$ is almost surely Ces\`{a}ro convergent to $X$. Since $\calb$ is bounded in probability, we have $X < \infty$ almost surely, that is $X \in L_+^0$; see, for example \cite[Cor. 2.12]{Tappe-W}.

\noindent The implications (vii) $\Rightarrow$ (viii) $\Rightarrow$ (xii), (vii) $\Rightarrow$ (xi) $\Rightarrow$ (xii), (ix) $\Rightarrow$ (x) $\Rightarrow$ (xii) and (ix) $\Rightarrow$ (xi) $\Rightarrow$ (xii) are obvious.

\noindent (xii) $\Rightarrow$ (i): Suppose that $\calb$ is not bounded in probability. Then there are $\epsilon > 0$ and a sequence $(X_n)_{n \in \bbn} \subset \calb$ such that
\begin{align*}
\bbp(X_n \geq n) \geq \epsilon \quad \text{for each $n \in \bbn$.}
\end{align*}
By assumption there exist a subsequence $(X_{n_k})_{k \in \bbn}$ and a nonnegative random variable $X \in L_+^0$ such that $\bar{X}_{n_k} \overset{\bbp}{\to} X$, where 
\begin{align*}
\bar{X}_{n_k} := \frac{1}{k} \sum_{l=1}^k X_{n_l} \quad \text{for each $k \in \bbn$.}
\end{align*}
By Lemma \ref{lemma-DS-ineqn} there exists a sequence $(a_k)_{k \in \bbn} \subset (0,\infty)$ with $a_k \to \infty$ such that
\begin{align*}
\bbp ( \bar{X}_{n_k} \geq a_k ) \geq \frac{\epsilon}{2} \quad \text{for each $k \in \bbn$.}
\end{align*}
Since $\bar{X}_{n_k} \overset{\bbp}{\to} X$ and $a_k \to \infty$, there exists an index $k_0 \in \bbn$ such that
\begin{align*}
\bbp \bigg( |\bar{X}_{n_k} - X| \geq \frac{a_k}{2} \bigg) \leq \frac{\epsilon}{4} \quad \text{for each $k \geq k_0$.}
\end{align*}
Note that for each $k \in \bbn$ we have
\begin{align*}
\{ \bar{X}_{n_k} \geq a_k \} &\subset \bigg\{ X \geq \frac{a_k}{2} \bigg\} \cup \bigg\{ \bar{X}_{n_k} - X \geq \frac{a_k}{2} \bigg\}
\\ &\subset \bigg\{ X \geq \frac{a_k}{2} \bigg\} \cup \bigg\{ |\bar{X}_{n_k} - X| \geq \frac{a_k}{2} \bigg\}.
\end{align*}
Therefore, for all $k \geq k_0$ we have
\begin{align*}
\bbp \bigg( X \geq \frac{a_k}{2} \bigg) \geq \bbp(\bar{X}_{n_k} \geq a_k) - \bbp \bigg( |\bar{X}_{n_k} - X| \geq \frac{a_k}{2} \bigg) \geq \frac{\epsilon}{2} - \frac{\epsilon}{4} = \frac{\epsilon}{4}.
\end{align*}
Since $a_k \to \infty$, we obtain $\bbp(X = \infty) > 0$, which contradicts $X \in L_+^0$.

\noindent (i) $\Leftrightarrow$ (xiii): This equivalence is a consequence of Prohorov's theorem.
\end{proof}

\begin{remark}
If the convex subset $\calb$ is closed, then it is bounded in probability (which means that $\calk_1$ satisfies NUPBR) if and only if it is convexly compact; see \cite{Zitkovic} for further details.
\end{remark}

Now, we consider $\calk_0$ and $(\calk_{\alpha})_{\alpha > 0}$ together.

\begin{proposition}\label{prop-NA1-NA}
The following statements are true:
\begin{enumerate}
\item We have $\bigcap_{\alpha > 0} \calb_{\alpha} = \{ x \in V_+ : p_{\calb}(x) = 0 \}$.

\item Suppose that $\calb_0 \subset \bigcap_{\alpha > 0} \calb_{\alpha}$. If $\calk_1$ satisfies NA$_1$, then $\calk_0$ satisfies NA.

\item Suppose that $\calb_0 = \bigcap_{\alpha > 0} \calb_{\alpha}$. Then $\calk_1$ satisfies NA$_1$ if and only if $\calk_0$ satisfies NA.
\end{enumerate}
\end{proposition}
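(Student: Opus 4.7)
The plan is to reduce everything to what has already been established, chiefly Lemma \ref{lemma-B-alpha-prod}, Lemma \ref{lemma-NA-pre-2}, and Proposition \ref{prop-bounded-2}. Indeed, this proposition is essentially a repeat of Proposition \ref{prop-NA1-NA-pre} applied in the $L^0$ setting, so the argument is independent of the specific topological vector lattice and uses only the structural properties encoded by $\calb_{\alpha} = \alpha \calb$.

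For statement~(1), I would use Lemma \ref{lemma-B-alpha-prod} to rewrite $\calb_\alpha = \alpha\calb$, so that the Minkowski functional takes the form
\begin{align*}
p_{\calb}(x) = \inf\{\alpha > 0 : x \in \calb_\alpha\}, \quad x \in V.
\end{align*}
Since $\calb$ is semi-solid (hence semi-circled) by Lemma \ref{lemma-B-conv-semi}, Lemma \ref{lemma-semi-balanced} gives $\alpha\calb \subset \beta\calb$ whenever $\alpha \leq \beta$, so $x \in \calb_\alpha$ for some $\alpha < \beta$ already forces $x \in \calb_\beta$. From this, for $x \in V_+$ we see that $p_{\calb}(x) = 0$ if and only if $x \in \calb_\alpha$ for every $\alpha > 0$. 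Together with $\calb_\alpha \subset V_+$, this yields the claimed equality. (Alternatively, one may just invoke Proposition \ref{prop-bounded-2} applied to the semi-circled set $\calb$ to obtain the equivalence between $p_{\calb}(x) = 0$ on $V_+\setminus\{0\}$ and nontriviality of $\bigcap_\alpha \alpha\calb$, and combine with Lemma \ref{lemma-B-alpha-prod}.)

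For statement~(2), if $\calk_1$ satisfies NA$_1$, then by definition $p_{\calb}(x) > 0$ for all $x \in V_+\setminus\{0\}$, so by statement~(1) we have $\bigcap_{\alpha > 0}\calb_\alpha = \{0\}$. The hypothesis $\calb_0 \subset \bigcap_{\alpha > 0}\calb_\alpha$ then forces $\calb_0 = \{0\}$, and Lemma \ref{lemma-NA-pre-2} gives NA. Statement~(3) is the same argument in both directions: under the equality $\calb_0 = \bigcap_{\alpha > 0}\calb_\alpha$, combining Lemma \ref{lemma-NA-pre-2} (which equates NA with $\calb_0 = \{0\}$) and statement~(1) (which equates NA$_1$ with $\bigcap_{\alpha > 0}\calb_\alpha = \{0\}$) gives the stated equivalence.

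No step is really an obstacle here; the only mildly subtle point is being careful that the representation $p_{\calb}(x) = \inf\{\alpha > 0 : x \in \calb_\alpha\}$ is used only after Lemma \ref{lemma-B-alpha-prod} has converted $\calb_\alpha$ into $\alpha\calb$, and that one invokes the semi-circled nesting $\alpha\calb \subset \beta\calb$ (Lemma \ref{lemma-semi-balanced}) to pass from "the infimum is zero" to "membership in every $\calb_\beta$." Everything else is a direct substitution into results already in hand.
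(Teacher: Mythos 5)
Your proposal is correct and follows essentially the same route as the paper: the paper proves this by citing Proposition \ref{prop-NA1-NA-pre}, whose proof rests on exactly the ingredients you use, namely the representation $p_{\calb}(x) = \inf\{\alpha > 0 : x \in \calb_{\alpha}\}$ from Lemma \ref{lemma-B-alpha-prod}, Proposition \ref{prop-bounded-2}, and Lemma \ref{lemma-NA-pre-2}. You have merely unpacked that chain of references (and correctly identified the reduction to Proposition \ref{prop-NA1-NA-pre}), so nothing of substance differs.
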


\begin{proof}
This is an immediate consequence of Proposition \ref{prop-NA1-NA-pre}.
\end{proof}

Recall that the convex cone $\calc \subset L^p$ is given by
\begin{align*}
\calc = (\calk_0 - L_+^0) \cap L^p
\end{align*}
for some $p \in [1,\infty]$.

\begin{proposition}\label{prop-NFL-NA-1-L0}
Suppose that
\begin{align*}
\bigg( \bigcap_{\alpha > 0} \calb_{\alpha} \bigg) \cap L^p \subset \overline{\calc}^{\| \cdot \|_{L^p}}.
\end{align*}
If $\calk_0$ satisfies NFLVR$_p$, then $\calk_1$ satisfies NA$_1$.
\end{proposition}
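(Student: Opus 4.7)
The plan is to recognise this as a direct application of the abstract Proposition~\ref{prop-NFL-NA-1} with the choice $U := L^p$ and $\tau$ the $L^p$-norm topology. All that is needed is to verify that this choice satisfies the hypotheses imposed in Section~\ref{sec-NA-tvs}.

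First, $L^p$ is an ideal in $V = L^0$: if $X \in L^p$ and $|Y| \leq |X|$ in $L^0$, then $\|Y\|_{L^p} \leq \|X\|_{L^p} < \infty$, so $Y \in L^p$. Second, $L^p$ is dense in $L^0$, since for any $X \in L^0$ the truncations $(X \wedge n) \vee (-n)$ lie in $L^{\infty} \subset L^p$ and converge to $X$ almost surely, hence in probability, i.e.\ in the $L^0$ topology. Equipped with $\| \cdot \|_{L^p}$, the space $L^p$ is a Banach lattice and therefore a topological vector lattice with positive cone $L_+^p = L_+^0 \cap L^p$.

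With $\tau = \| \cdot \|_{L^p}$, the condition NFL$_{\tau}$ is by definition NFLVR$_p$, and the inclusion hypothesis $(\bigcap_{\alpha > 0} \calb_{\alpha}) \cap L^p \subset \overline{\calc}^{\| \cdot \|_{L^p}}$ is exactly the abstract hypothesis $(\bigcap_{\alpha > 0} \calb_{\alpha}) \cap U \subset \overline{\calc}^{\tau}$. Invoking Proposition~\ref{prop-NFL-NA-1} then yields NA$_1$ for $\calk_1$.

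I expect no serious obstacle: the substantive content — approximating an element of $\bigcap_{\alpha > 0} \calb_{\alpha}$ by truncations $x_i^+ \wedge x$ that lie both in the semi-solid intersection and in $U_+$, then using NFL$_\tau$ to force each truncation to vanish — has already been carried out abstractly in the proof of Proposition~\ref{prop-NFL-NA-1}. The only point worth double-checking is that the $L^p$-norm topology on $U = L^p$ is indeed fine enough for the abstract closure argument, which is automatic since the role of $\tau$ in Proposition~\ref{prop-NFL-NA-1} is purely to define NFL$_{\tau}$ via a topological closure of $\calc$ in $U$.
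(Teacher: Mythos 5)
Your proposal is correct and follows exactly the paper's route: the paper also proves this as an immediate instance of Proposition~\ref{prop-NFL-NA-1} with $U = L^p$ (an ideal dense in $L^0$, as noted in Section~\ref{sec-random-var}) and $\tau$ the $L^p$-norm topology, under which NFL$_{\tau}$ is by definition NFLVR$_p$. Your verification of the hypotheses is a harmless elaboration of what the paper leaves implicit.
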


\begin{proof}
This is an immediate consequence of Proposition \ref{prop-NFL-NA-1}.
\end{proof}

\section{The fundamental theorem of asset pricing on Banach function spaces}\label{sec-BFS}

In this section we provide a version of the abstract FTAP on Banach function spaces. First, let us briefly recall some properties of Banach function spaces.

Let $(\Omega,\calg,\mu)$ be a complete $\sigma$-finite measure space. A Banach space $U \subset V = L^0 = L^0(\Omega,\calg,\mu)$ is called a \emph{Banach function space} (or \emph{K\"{o}the function space} or \emph{Banach ideal space}) if the following conditions are fulfilled:
\begin{enumerate}
\item For all $f \in L^0$ and $g \in U$ such that $|f| \leq |g|$ we have $f \in U$ and $\| f \|_U \leq \| g \|_U$.

\item We have $\bbI_A \in U$ for all $A \in \calg$ with $\mu(A) < \infty$.
\end{enumerate}
Every Banach function space is a Banach lattice. Furthermore, every order continuous Banach lattice with a weak unit is order isometric to a Banach function space contained in $L^0(\Omega,\calg,\bbp)$ for some probability space $(\Omega,\calg,\bbp)$; see \cite[Prop. 1.b.14]{Lindenstrauss} for further details. Note that every Banach function space $U$ is an ideal which is dense in $V$. Indeed, by the definition above, a Banach function space space $U \subset V$ is solid subspace, and it is dense in $V$ because every measurable function can be approximated by a sequence of simple functions. The \emph{K\"{o}the dual space} $U^*$ of a Banach function space $U$ is defined as the space of all $g \in L^0$ such that $fg \in L^1$ for all $f \in U$. The K\"{o}the dual $U^*$ equipped with the norm
\begin{align*}
\| g \|_{U^*} := \sup \bigg\{ \bigg| \int_{\Omega} fg \, d\mu \bigg| : \| f \|_U \leq 1 \bigg\}, \quad g \in U^*
\end{align*}
is also a Banach function space. A linear functional $x' \in U'$ is called an \emph{integral} if there exists $g \in U^*$ such that
\begin{align*}
x'(f) = \int_{\Omega} fg \, d\mu \quad \text{for all $f \in U$.}
\end{align*}
A linear functional $x' \in U'$ is called \emph{order continuous} if $x'(f_n) \to 0$ for every sequence $(f_n)_{n \in \bbn} \subset U$ such that $f_n \downarrow 0$ almost everywhere. We denote by $U_c' \subset U'$ the subspace of all order continuous functionals. A linear functional $x' \in U'$ is an integral if and only if it is order continuous; see \cite[p. 29]{Lindenstrauss}. The linear operator $T \in L(U^*,U_c')$ given by
\begin{align}\label{T-def-1}
(Tg)f := \int_{\Omega} fg \, d\mu \quad \text{for all $g \in U^*$ and $f \in U$}
\end{align}
is an isometric isomorphism. 

\begin{lemma}\cite[p. 29]{Lindenstrauss}\label{lemma-sigma-order-cont}
We have $U' = U_c'$ if and only if $U$ is $\sigma$-order continuous.
\end{lemma}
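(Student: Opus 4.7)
The plan is to establish the two implications separately, with the nontrivial direction handled by a Hahn--Banach construction together with a Banach limit.

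For the direction $(\Leftarrow)$, assume $U$ is $\sigma$-order continuous. The inclusion $U_c' \subset U'$ is trivial, so I only need to show every $x' \in U'$ is order continuous. Given $(f_n) \subset U$ with $f_n \downarrow 0$ a.e., the lattice order on the Banach function space $U$ agrees with the a.e.\ order, so $f_n \downarrow 0$ holds in the lattice sense as well. By $\sigma$-order continuity, $\|f_n\|_U \downarrow 0$, and hence $|x'(f_n)| \leq \|x'\|_{U'}\|f_n\|_U \to 0$, so $x' \in U_c'$.

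For the direction $(\Rightarrow)$, I argue by contradiction. Suppose $U$ is not $\sigma$-order continuous, so there exist $(f_n) \subset U_+$ with $f_n \downarrow 0$ a.e.\ but, since $(\|f_n\|_U)$ is decreasing, $\|f_n\|_U \geq \alpha$ for some $\alpha > 0$ and all $n$. By Hahn--Banach, for each $n$ choose $y_n' \in U'$ with $\|y_n'\|_{U'} = 1$ and $y_n'(f_n) = \|f_n\|_U \geq \alpha$. Since the Banach dual of a Banach lattice is itself a Banach lattice, I may replace $y_n'$ by its modulus $|y_n'|$, which still has norm at most $1$ and satisfies $|y_n'|(f_n) \geq y_n'(f_n) \geq \alpha$ because $f_n \geq 0$; denote this positive functional by $y_n'$ again. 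Fix a Banach limit $L \in (\ell^\infty)'$ and define
\[ x'(f) := L\big( (y_n'(f))_{n \in \bbn} \big), \quad f \in U. \]
The sequence in the argument is bounded by $\|f\|_U$, so $x'$ is a well-defined bounded linear functional on $U$ with $\|x'\|_{U'} \leq 1$. For any fixed $m$ and every $n \geq m$, monotonicity of $(f_k)$ and positivity of $y_n'$ give $y_n'(f_m) \geq y_n'(f_n) \geq \alpha$; since a Banach limit dominates the $\liminf$, this yields $x'(f_m) \geq \alpha$ for every $m$. But $f_m \downarrow 0$, so $x' \in U' \setminus U_c'$, contradicting $U' = U_c'$.

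The main obstacle I anticipate is this construction in the $(\Rightarrow)$ direction: one must pass from the individual Hahn--Banach witnesses $y_n'$ (each only controlling a single index) to a single bounded functional that stays uniformly away from zero on the whole tail of $(f_m)$. The combination of two ingredients makes this possible, and neither can be easily dispensed with: the Banach lattice structure on $U'$ is needed so that replacing $y_n'$ by $|y_n'|$ preserves both the norm bound and the lower bound $|y_n'|(f_m) \geq \alpha$ for $m \leq n$, and the Banach limit is needed to aggregate the entire sequence into one functional while keeping boundedness. Everything else is routine.
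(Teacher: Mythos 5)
Your argument is correct. Note that the paper offers no proof of this lemma at all --- it is stated purely as a citation to Lindenstrauss--Tzafriri, p.~29 --- so you have supplied a genuine, self-contained proof where the paper defers to the literature. Both directions check out: the easy implication correctly uses that for a decreasing nonnegative sequence in an ideal of $L^0$ the lattice infimum coincides with the a.e.\ pointwise infimum, so $f_n \downarrow 0$ a.e.\ is the same as $f_n \downarrow 0$ in $U$, and then $|x'(f_n)| \leq \|x'\|_{U'}\|f_n\|_U \to 0$. In the converse, the monotonicity of $(\|f_n\|_U)$ (from the lattice norm property) justifies the uniform lower bound $\alpha$, the passage to $|y_n'|$ preserves the norm bound and the estimate $|y_n'|(f_n) \geq y_n'(f_n)$ since $f_n \geq 0$, and the Banach limit inequality $L(a) \geq \liminf_n a_n$ gives $x'(f_m) \geq \alpha$ for every $m$, which is exactly the failure of order continuity for the bounded functional $x'$. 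One small remark: the Banach limit can be replaced by a weak-$^*$ cluster point of $(y_n')_{n \in \bbn}$ in the unit ball of $U'$ (Banach--Alaoglu), which yields the same conclusion $x'(f_m) \geq \alpha$ without invoking shift invariance; but your version is equally valid, and your identification of the two essential ingredients (the lattice structure of $U'$ for the modulus step, and an aggregating limit functional) is accurate.
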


From now on, let $(\Omega,\calg,\bbp)$ be a complete probability space, and let $U \subset V = L^0 = L^0(\Omega,\calg,\bbp)$ be a Banach function space. Then the isometric isomorphism $T \in L(U^*,U_c')$ defined in (\ref{T-def-1}) is given by
\begin{align*}
(TY)X = \bbe[XY] \quad \text{for all $Y \in U^*$ and $X \in U$.}
\end{align*}
Let $U_{++}^*$ be the set of all $Y \in U_+^*$ such that $\bbp(Y > 0) = 1$.

\begin{lemma}\label{lemma-pos-functionals}
Let $x' \in U_c'$ and $Y \in U^*$ be such that $x' = TY$. Then the following statements are true:
\begin{enumerate}
\item We have $x' \in U_+'$ if and only if $Y \in U_+^*$.

\item We have $x' \in U_{++}'$ if and only if $Y \in U_{++}^*$.
\end{enumerate}
\end{lemma}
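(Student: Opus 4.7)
The plan is to prove both equivalences directly from the representation $x'(X) = \mathbb{E}[XY]$, using indicator functions of suitable measurable sets as test elements in $U$. A crucial preliminary observation is that for every $A \in \calg$ we have $\mathbb{I}_A \in U$, since $\mathbb{P}(A) \leq 1 < \infty$ and by definition a Banach function space contains indicators of sets of finite measure; moreover, clearly $\mathbb{I}_A \in U_+$.

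For statement (1), the ``if'' direction is immediate: if $Y \in U_+^*$, then for every $X \in U_+$ we have $XY \geq 0$ almost surely, hence $x'(X) = \mathbb{E}[XY] \geq 0$, so $x' \in U_+'$. For the ``only if'' direction, I would argue by contraposition. Suppose $Y \notin U_+^*$, so that $A := \{Y < 0\}$ satisfies $\mathbb{P}(A) > 0$. Then $X := \mathbb{I}_A \in U_+$, but
\begin{align*}
x'(X) = \mathbb{E}[\mathbb{I}_A Y] < 0,
\end{align*}
contradicting $x' \in U_+'$.

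For statement (2), I first use (1) together with the trivial inclusions $U_{++}' \subset U_+'$ and $U_{++}^* \subset U_+^*$ to reduce to the case $Y \in U_+^*$. For the ``if'' direction, assume $Y \in U_{++}^*$ and let $X \in U_+ \setminus \{0\}$. Then $XY \geq 0$ and $XY > 0$ on the set $\{X > 0\}$, which has positive probability because $X \neq 0$ in $L^0$. Since $Y \in U^*$ ensures $XY \in L^1$, we obtain $x'(X) = \mathbb{E}[XY] > 0$, so $x' \in U_{++}'$. For the ``only if'' direction, I again contrapose: if $Y \notin U_{++}^*$, then $A := \{Y = 0\}$ has $\mathbb{P}(A) > 0$, and $X := \mathbb{I}_A \in U_+ \setminus \{0\}$ satisfies $x'(X) = \mathbb{E}[\mathbb{I}_A Y] = 0$, contradicting $x' \in U_{++}'$.

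There is no real obstacle here; the only subtlety is checking that the indicator functions used as test elements actually lie in $U$ (guaranteed by the axioms of a Banach function space) and that the products $XY$ are integrable (guaranteed by $Y \in U^* $ and $X \in U$), so that $\mathbb{E}[XY]$ is well-defined and the usual monotonicity of the integral applies.
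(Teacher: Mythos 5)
Your proof is correct and follows essentially the same route as the paper's: the "if" directions via monotonicity of the integral, and the "only if" directions by testing against the indicators of $\{Y<0\}$ and $\{Y=0\}$ respectively. The only difference is cosmetic — you make explicit the reduction to $Y \in U_+^*$ in statement (2), which the paper leaves implicit.
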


\begin{proof}
Suppose that $Y \in U_+^*$. Then we have $Y \geq 0$, and hence $x'(X) = \bbe[XY] \geq 0$ for all $X \in U_+$, showing that $x' \in U_+'$. Conversely, assume that $x' \in U_+'$. Then we have $\bbe[XY] = x'(X) \geq 0$ for all $X \in U_+$. Let $A := \{ Y < 0 \} \in \calg$. We claim that $\bbp(A) = 0$. Suppose, on the contrary, that $\bbp(A) > 0$, and set $X := \bbI_A$. Then we obtain the contradiction $\bbe[XY] = \bbe[\bbI_A Y] < 0$. Therefore, we have $Y \in U_+^*$.

Now, suppose that $Y \in U_{++}^*$. Then we have $\bbp(Y > 0) = 1$. Therefore, for each $X \in U_+ \setminus \{ 0 \}$ we obtain $XY \neq 0$, and hence $x'(X) = \bbe[XY] > 0$, showing that $x' \in U_{++}'$. Conversely, assume that $x' \in U_{++}'$. Then we have $\bbe[XY] = x'(X) > 0$ for all $X \in U_+ \setminus \{ 0 \}$. Let $A := \{ Y = 0 \} \in \calg$. We claim that $\bbp(A) = 0$. Suppose, on the contrary, that $\bbp(A) > 0$, and set $X := \bbI_A \in U_+ \setminus \{ 0 \}$. Then we obtain the contradiction $\bbe[XY] = \bbe[\bbI_A Y] = 0$. Therefore, we have $Y \in U_{++}^*$.
\end{proof}

\begin{lemma}\label{lemma-sep-measure-BFS}
For each continuous linear functional $x' \in U'$ the following statements are equivalent:
\begin{enumerate}
\item[(i)] We have $x' \in U_{++}' \cap U_c'$.

\item[(ii)] There exist an equivalent probability measure $\bbq \approx \bbp$ on $(\Omega,\calg)$ with Radon-Nikodym derivative $\frac{d \bbq}{d \bbp} \in U_{++}^*$ and a constant $c \in (0,\infty)$ such that
\begin{align}\label{functional-c-exp}
x'(X) = c \cdot \bbe_{\bbq}[X] \quad \text{for all $X \in U$.}
\end{align}
\end{enumerate}
\end{lemma}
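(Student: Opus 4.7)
The plan is to obtain both implications by using the isometric isomorphism $T : U^* \to U_c'$ from (\ref{T-def-1}) together with Lemma \ref{lemma-pos-functionals}, then normalizing appropriately to get an equivalent probability measure.

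For the implication (i) $\Rightarrow$ (ii), I would start from $x' \in U_c'$ and use the fact that $T$ is an isometric isomorphism to obtain a (unique) $Y \in U^*$ with $x' = TY$, that is $x'(X) = \bbe[XY]$ for all $X \in U$. Since $x' \in U_{++}'$, Lemma \ref{lemma-pos-functionals} yields $Y \in U_{++}^*$, i.e.\ $Y \geq 0$ and $\bbp(Y > 0) = 1$. The key observation is that $\bbI_{\Omega} \in U$ because $\bbp(\Omega) = 1 < \infty$, so that $c := \bbe[Y] = x'(\bbI_{\Omega})$ is a finite positive number; positivity follows from $Y \in U_{++}^*$ and finiteness from continuity of $x'$ on $U$. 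Then I would define $\bbq$ by $\frac{d\bbq}{d\bbp} := Y/c$; this gives a probability measure with $\bbq \approx \bbp$ and $\frac{d\bbq}{d\bbp} \in U_{++}^*$, and (\ref{functional-c-exp}) follows immediately from $x'(X) = \bbe[XY] = c \cdot \bbe[X \cdot Y/c] = c \cdot \bbe_{\bbq}[X]$.

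For the implication (ii) $\Rightarrow$ (i), I would set $Y := c \cdot \frac{d\bbq}{d\bbp}$. By hypothesis $Y \in U_{++}^*$, and for all $X \in U$ we have $x'(X) = c \cdot \bbe_{\bbq}[X] = \bbe[X Y] = (TY)(X)$. Hence $x' = TY$ lies in the range of $T$, which is $U_c'$, so $x' \in U_c'$, and by Lemma \ref{lemma-pos-functionals} the positivity of $Y$ in $U_{++}^*$ forces $x' \in U_{++}'$. Altogether $x' \in U_{++}' \cap U_c'$.

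The main (and essentially only) subtle point is the finiteness of $c = \bbe[Y]$ in the forward direction; this is guaranteed by the fact that the constant function $\bbI_\Omega$ belongs to the Banach function space $U$, which uses the probability-measure assumption $\bbp(\Omega) = 1 < \infty$ from the definition of a Banach function space. Everything else is a direct application of the isometric isomorphism $T$ and Lemma \ref{lemma-pos-functionals}.
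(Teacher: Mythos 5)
Your proof is correct and follows essentially the same route as the paper: apply the isomorphism $T$ and Lemma \ref{lemma-pos-functionals} to get $Y \in U_{++}^*$, normalize by $c = \bbe[Y]$ to produce $\bbq$, and reverse the construction for the converse. Your explicit justification of the finiteness of $c$ via $\bbI_{\Omega} \in U$ (equivalently, $Y = \bbI_{\Omega}\,Y \in L^1$ since $Y \in U^*$) is a detail the paper leaves implicit, and it is a worthwhile addition.
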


\begin{proof}
(i) $\Rightarrow$ (ii): By Lemma \ref{lemma-pos-functionals} there exists $Y \in U_{++}^*$ such that $x'(X) = \bbe[XY]$ for all $X \in U$. Setting $c := \bbe[Y]$ we have $c \in (0,\infty)$. Since $\bbp(Y > 0) = 1$, there exists an equivalent probability measure $\bbq \approx \bbp$ such that $\frac{Y}{c} = \frac{d \bbq}{d \bbp}$. Therefore, we have $\frac{d \bbq}{d \bbp} \in U_{++}^*$ and (\ref{functional-c-exp}).

\noindent (ii) $\Rightarrow$ (i): Setting $Z := \frac{d \bbq}{d \bbp}$ we have $Z \in U_{++}^*$ and
\begin{align*}
x'(X) = c \cdot \bbe[XZ] \quad \text{for all $X \in U$,}
\end{align*}
showing that $x' \in U_c'$. Furthermore, since $Z \in U_{++}^*$, by Lemma \ref{lemma-pos-functionals} we have $x' \in U_{++}'$.
\end{proof}

As in Section \ref{sec-NA-tvs}, let $\calk_0 \subset L^0$ be a subset such that Assumption \ref{ass-convex-0} is fulfilled; that is $\calk_0$ is a convex cone. Recall that the convex cone $\calc \subset U$ is given by
\begin{align*}
\calc = (\calk_0 - L_+^0) \cap U.
\end{align*}
We denote by $\cals(\calc)$ the set of all strictly separating functionals for $\calc$; see Definition~\ref{def-strictly-sep-fct}.

\begin{definition}
A probability measure $\bbq$ on $(\Omega,\calg)$ is called a \emph{separating measure} for $\calc$ if each $X \in \calc$ is quasi-integrable and we have
\begin{align*}
\bbe_{\bbq}[X] \leq 0 \quad \text{for all $X \in \calc$.}
\end{align*}
\end{definition}

Let $\tau$ be the topology induced by the norm of $U$. 

\begin{theorem}[Abstract FTAP on Banach function spaces]\label{thm-FTAP-Banach}
The following statements are true:
\begin{enumerate}
\item If a separating measure $\bbq \approx \bbp$ for $\calc$ with Radon-Nikodym derivative $\frac{d \bbq}{d \bbp} \in U_{++}^*$ exists, then $\calk_0$ satisfies NFL$_{\tau}$.

\item If the topology $\tau$ is stronger than the topology induced by convergence in probability, and a separating measure $\bbq \approx \bbp$ for $\calc$ exists, then $\calk_0$ satisfies NFL$_{\tau}$.

\item $\calk_0$ satisfies NFL$_{\tau}$ if and only if $\cals(\calc) \neq \emptyset$, and in this case, the following statements are equivalent: 
\begin{enumerate}
\item[(i)] There exists a separating measure $\bbq \approx \bbp$ for $\calc$ with Radon-Nikodym derivative $\frac{d \bbq}{d \bbp} \in U_{++}^*$.

\item[(ii)] We have $\cals(\calc) \cap U_c' \neq \emptyset$.
\end{enumerate}
\end{enumerate}
\end{theorem}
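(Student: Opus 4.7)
My plan is to treat the three parts separately, reducing each to either an application of the abstract results of Section \ref{sec-abstract} combined with Lemmas \ref{lemma-pos-functionals} and \ref{lemma-sep-measure-BFS}, or to a truncation and passage-to-the-limit argument that uses the downward-closedness of $\overline{\calc}^{\tau}$ together with the ideal structure of $U$.

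For Part 1, I set $Z := d\bbq/d\bbp \in U_{++}^{*}$ and define $x' := TZ$, i.e.\ $x'(X) = \bbe[XZ] = \bbe_{\bbq}[X]$ for $X \in U$. By the isometric isomorphism $T : U^{*} \to U_c'$, the functional $x'$ lies in $U_c'$; Lemma \ref{lemma-pos-functionals} applied with $Z \in U_{++}^{*}$ gives $x' \in U_{++}'$; and the separating hypothesis on $\bbq$ rewrites as $x'(X) \leq 0$ for $X \in \calc$. Hence $x' \in \cals(\calc) \cap U_c'$, and Corollary \ref{cor-FTAP-loc-conv} yields NFL$_{\tau}$.

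For Part 2, I first observe that $\calk_0 - L_+^0$ is downward closed in $L^0$, which together with the ideal property of $U$ and the local solidness of the norm topology $\tau$ on the Banach lattice $U$ implies that $\overline{\calc}^{\tau}$ is downward closed in $U$ (if $Y_n \to Y$ in $\tau$ with $Y_n \in \calc$ and $Z \leq Y$, $Z \in U$, then $Y_n \wedge Z \in \calc$ and $Y_n \wedge Z \to Z$). Given $X \in \overline{\calc}^{\tau} \cap U_+$, for each $M > 0$ I truncate to $X \wedge M \in \overline{\calc}^{\tau}$ and extract a sequence $(Y_n) \subset \calc$ with $Y_n \leq X \wedge M$ and $Y_n \to X \wedge M$ in $\tau$, hence in probability and, after passing to a subsequence, almost surely. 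Each $Y_n$ is quasi-$\bbq$-integrable with $\bbe_{\bbq}[Y_n] \leq 0$; since $Y_n^{+} \leq X \wedge M \leq M$ is bounded, the bounded convergence theorem gives $\bbe_{\bbq}[Y_n^{+}] \to \bbe_{\bbq}[X \wedge M]$, and a Fatou-type argument controlling $Y_n^{-}$ (which tends to $0$ in probability since $X \wedge M \geq 0$) yields $\bbe_{\bbq}[X \wedge M] \leq 0$; as $X \wedge M \geq 0$, equality holds, and letting $M \to \infty$ with monotone convergence gives $\bbe_{\bbq}[X] = 0$. The equivalence $\bbq \approx \bbp$ then forces $X = 0$ in $L^0$.

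For Part 3, the direction $\cals(\calc) \neq \emptyset \Rightarrow$ NFL$_{\tau}$ is immediate from Corollary \ref{cor-FTAP-loc-conv}. For the converse I would invoke Theorem \ref{thm-FTAP-abstract-2}, which requires the Kreps--Yan property of $(U,\tau)$; this is secured by Proposition \ref{prop-KY-prop} once the Halmos--Savage property for $U$ is established, which on a Banach function space follows from the classical Halmos--Savage theorem for families of equivalent probability measures combined with the order-continuous representation in Lemma \ref{lemma-sep-measure-BFS}. Given NFL$_{\tau}$, the equivalence (i)$\Leftrightarrow$(ii) is then a direct translation: (i)$\Rightarrow$(ii) is the construction of Part 1; for (ii)$\Rightarrow$(i), Lemma \ref{lemma-sep-measure-BFS} allows me to write any $x' \in \cals(\calc) \cap U_c'$ as $c \, \bbe_{\bbq}[\cdot]$ with $d\bbq/d\bbp \in U_{++}^{*}$, after which the separating property of $x'$ for $\calc$ becomes $\bbe_{\bbq}[X] \leq 0$ for every $X \in \calc$. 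The principal obstacle is the Fatou step in Part 2: without regularity of $d\bbq/d\bbp$ relative to $U^{*}$, the negative parts $Y_n^{-}$ need not be $\bbq$-uniformly integrable, so the separation inequality does not transfer to the limit automatically; I would circumvent this by choosing the approximating sequence to satisfy $Y_n \leq X \wedge M$ from the outset (bounding $Y_n^{+}$ a priori) and using Egorov's theorem on events where $Y_n \to X \wedge M$ uniformly, while controlling the complementary event through further $\tau$-truncations inside $\calc$.
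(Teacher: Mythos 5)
Part 1 and the equivalence (i) $\Leftrightarrow$ (ii) inside Part 3 of your proposal are correct and coincide with the paper's argument: the functional $TZ$ with $Z = \frac{d\bbq}{d\bbp} \in U_{++}^*$ is a strictly separating element of $U_c'$ (Lemma \ref{lemma-pos-functionals}, Corollary \ref{cor-FTAP-loc-conv}), and Lemma \ref{lemma-sep-measure-BFS} converts any element of $\cals(\calc) \cap U_c'$ back into a separating measure with density in $U_{++}^*$.

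The first gap is in Part 3. To get NFL$_{\tau} \Rightarrow \cals(\calc) \neq \emptyset$ from Theorem \ref{thm-FTAP-abstract-2} you need the Kreps--Yan property of $U$, and your route to it --- the Halmos--Savage property of $U$ ``from the classical Halmos--Savage theorem combined with the order-continuous representation'' --- does not work. The Halmos--Savage property quantifies over arbitrary strictly positive separating subsets of $U_+' \setminus \{0\}$, and on a Banach function space that is not $\sigma$-order continuous (e.g.\ $L^{\infty}$, where purely finitely additive functionals occur) such functionals need not lie in $U_c'$, hence are not represented by densities in $U^*$, so the classical theorem about dominated families of countably additive measures simply does not see them. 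The paper bypasses this entirely by citing \cite{Rokhlin-KY} for the Kreps--Yan property of every Banach function space; you either need to do the same or give an actual proof of the Lindel\"{o}f condition for $U$.

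The second, more serious gap is the limit passage in Part 2, which you yourself flag. From $Y_n \in \calc$, $Y_n \leq X \wedge M$, $Y_n \to X \wedge M$ almost surely and $\bbe_{\bbq}[Y_n] \leq 0$ you obtain $\bbe_{\bbq}[X \wedge M] = \lim_n \bbe_{\bbq}[Y_n^+] \leq \liminf_n \bbe_{\bbq}[Y_n^-]$, and Fatou only gives $\liminf_n \bbe_{\bbq}[Y_n^-] \geq 0$, i.e.\ the wrong direction; without $\bbq$-uniform integrability of $(Y_n^-)$ nothing forces this $\liminf$ to vanish. The Egorov repair does not supply it: on the Egorov set $A$ you indeed get $\bbe_{\bbq}[Y_n \bbI_A] \to \bbe_{\bbq}[(X \wedge M)\bbI_A]$, but the separation inequality only yields $\bbe_{\bbq}[Y_n \bbI_A] \leq \bbe_{\bbq}[Y_n^- \bbI_{A^c}] - \bbe_{\bbq}[Y_n^+ \bbI_{A^c}]$, and $\bbe_{\bbq}[Y_n^- \bbI_{A^c}]$ is exactly the uncontrolled quantity: since $\frac{d\bbq}{d\bbp}$ is not assumed to lie in $U^*$, the negative mass of $Y_n$ can concentrate on sets $A_n$ with $c_n \bbp(A_n) \to 0$ while $c_n \bbq(A_n)$ stays bounded away from zero, and nothing in the hypotheses (in particular no downward-closedness argument) excludes elements $Y_n = (X \wedge M) - c_n \bbI_{A_n}$ of this type from $\calc$; for such $Y_n$ every premise of your limit passage holds and the conclusion fails. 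Be aware that the paper's own proof of Part 2 handles this step by applying Fatou's lemma directly to the almost surely convergent subsequence $X_{n_k} \to X$, which likewise presupposes a fixed $\bbq$-integrable minorant for the $X_{n_k}$ that the hypotheses do not provide --- so the obstacle you identified is genuine and is not resolved by the argument you would be comparing against; it disappears only under the additional hypothesis of Part 1 (density in $U^*$) or when $U = L^{\infty}$ with the sup-norm, where norm convergence furnishes the uniform lower bound.
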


\begin{proof}
According to \cite[Thm. 1]{Rokhlin-KY} the Banach function space $U$ has the Kreps-Yan property. Therefore, statements (1) and (3) are a consequence of Theorem \ref{thm-FTAP-abstract-2} and Lemma \ref{lemma-sep-measure-BFS}. In order to prove statement (2), let $X \in \overline{\calc}^{\tau} \cap U_+$ be arbitrary. Then there exists a sequence $(X_n)_{n \in \bbn} \subset \calc$ such that $\| X_n - X \|_U \to 0$. By assumption we have $X_n \overset{\bbp}{\to} X$. Hence, there is a subsequence $(X_{n_k})_{k \in \bbn}$ such that $\bbp$-almost surely $X_{n_k} \to X$ for $k \to \infty$. Since $\bbq \approx \bbp$, we also have $\bbq$-almost surely $X_{n_k} \to X$ for $k \to \infty$. Since $\bbq$ is a separating measure for $\calc$, we have $\bbe_{\bbq}[X_n] \leq 0$ for all $n \in \bbn$. By Fatou's lemma we obtain
\begin{align*}
\bbe_{\bbq}[X] = \bbe_{\bbq} \Big[ \lim_{k \to \infty} X_{n_k} \Big] \leq \liminf_{k \to \infty} \bbe_{\bbq}[X_{n_k}] \leq 0.
\end{align*}
Since $X \in U_+$, we deduce that $X = 0$, proving that $\calk_0$ satisfies NFL$_{\tau}$.
\end{proof}

By Lemma \ref{lemma-sigma-order-cont} those Banach function spaces, on which the FTAP is warranted in its classical form with a separating measure, are exactly $\sigma$-order continuous Banach function spaces. More precisely, we have the following result.

\begin{corollary}[Abstract FTAP on $\sigma$-order continuous Banach function spaces]\label{cor-FTAP-Banach}
Suppose that the Banach function space $U$ is $\sigma$-order continuous. Then the following statements are equivalent:
\begin{enumerate}
\item[(i)] $\calk_0$ satisfies NFL$_{\tau}$.

\item[(ii)] There exists a separating measure $\bbq \approx \bbp$ for $\calc$ with Radon-Nikodym derivative $\frac{d \bbq}{d \bbp} \in U_{++}^*$.
\end{enumerate}
If, moreover, the topology $\tau$ is stronger than the topology induced by convergence in probability, then the following statements are equivalent:
\begin{enumerate}
\item[(i)] $\calk_0$ satisfies NFL$_{\tau}$.

\item[(ii)] There exists a separating measure $\bbq \approx \bbp$ for $\calc$.

\item[(iii)] There exists a separating measure $\bbq \approx \bbp$ for $\calc$ with Radon-Nikodym derivative $\frac{d \bbq}{d \bbp} \in U_{++}^*$.
\end{enumerate}
\end{corollary}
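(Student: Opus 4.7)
The plan is to derive this corollary directly from Theorem \ref{thm-FTAP-Banach} and Lemma \ref{lemma-sigma-order-cont}, without any new arguments. The $\sigma$-order continuity of $U$ is the hypothesis that collapses the distinction between arbitrary separating functionals and those coming from measures.

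For the first equivalence, the implication (ii)$\Rightarrow$(i) is exactly statement (1) of Theorem \ref{thm-FTAP-Banach}, so nothing needs to be done there. For (i)$\Rightarrow$(ii), assume $\calk_0$ satisfies NFL$_{\tau}$. By statement (3) of Theorem \ref{thm-FTAP-Banach} we have $\cals(\calc) \neq \emptyset$; pick any $x' \in \cals(\calc)$. Since $U$ is $\sigma$-order continuous, Lemma \ref{lemma-sigma-order-cont} yields $U' = U_c'$, hence $x' \in U_c'$, so $\cals(\calc) \cap U_c' \neq \emptyset$. Applying the equivalence (ii)$\Rightarrow$(i) within statement (3) of Theorem \ref{thm-FTAP-Banach} then produces a separating measure $\bbq \approx \bbp$ for $\calc$ with $\frac{d \bbq}{d \bbp} \in U_{++}^*$.

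For the second part, assume additionally that $\tau$ is stronger than the topology of convergence in probability. The implication (i)$\Rightarrow$(iii) is exactly the first equivalence just established, and (iii)$\Rightarrow$(ii) is trivial since the RN-derivative condition is an extra constraint on the measure. The remaining implication (ii)$\Rightarrow$(i) is precisely statement (2) of Theorem \ref{thm-FTAP-Banach}, where the stronger topology is used together with a Fatou-type passage to the limit to conclude NFL$_{\tau}$ from the existence of any equivalent separating measure, not necessarily with RN-derivative in $U_{++}^*$.

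There is no genuine obstacle here; the corollary is a bookkeeping consequence once one recognizes that $\sigma$-order continuity of $U$ forces every continuous linear functional to be an integral, and hence every strictly separating functional to be representable by a density in $U_{++}^*$ via Lemma \ref{lemma-sep-measure-BFS}. The only mildly delicate point to double-check when writing it up is that the separating measure produced in Theorem \ref{thm-FTAP-Banach}(3)(i) is automatically equivalent to $\bbp$ (because its density lies in $U_{++}^*$, i.e., is strictly positive $\bbp$-a.s.), so that the word ``equivalent'' in the statement of the corollary is indeed warranted.
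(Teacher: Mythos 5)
Your proposal is correct and follows exactly the paper's own route: the corollary is deduced from Theorem \ref{thm-FTAP-Banach} once Lemma \ref{lemma-sigma-order-cont} identifies $\sigma$-order continuity with $U' = U_c'$, so that $\cals(\calc) \subset U_c'$ and statement (3) of that theorem yields the separating measure, with statement (2) supplying the extra implication in the second part. The paper states this in two lines; your write-up merely makes the bookkeeping explicit, and your closing remark about equivalence of $\bbq$ and $\bbp$ being automatic from $\frac{d\bbq}{d\bbp} \in U_{++}^*$ is also consistent with Lemma \ref{lemma-sep-measure-BFS}.
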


\begin{proof}
By Lemma \ref{lemma-sigma-order-cont} the assumption that $U$ is $\sigma$-order continuous just means that $U_c' = U'$. Therefore, the result is an immediate consequence of Theorem \ref{thm-FTAP-Banach}.
\end{proof}

As a consequence, we obtain the following result for $L^p$-spaces. Note that for $U = L^p$ the convex cone $\calc \subset L^p$ is given by
\begin{align*}
\calc = (\calk_0 - L_+^0) \cap L^p.
\end{align*}

\begin{corollary}[Abstract FTAP on $L^p$-spaces]\label{cor-FTAP-Lp}
Let $p \in [1,\infty)$ be arbitrary. If $U = L^p$, then the following statements are equivalent:
\begin{enumerate}
\item[(i)] $\calk_0$ satisfies NFLVR$_p$.

\item[(ii)] There exists a separating measure $\bbq \approx \bbp$ for $\calc$.

\item[(iii)] There exists a separating measure $\bbq \approx \bbp$ for $\calc$ with Radon-Nikodym derivative $\frac{d\bbq}{d\bbp} \in L_{++}^q$, where $q \in (1,\infty]$ is such that $\frac{1}{p} + \frac{1}{q} = 1$.
\end{enumerate}
\end{corollary}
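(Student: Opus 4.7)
The plan is to deduce this immediately from Corollary \ref{cor-FTAP-Banach} applied to the Banach function space $U = L^p$, so the task reduces to verifying the hypotheses of that corollary. First, I would observe that for $p \in [1,\infty)$ the space $L^p = L^p(\Omega,\calg,\bbp)$ is indeed a Banach function space: the lattice condition $\|f\|_p \leq \|g\|_p$ whenever $|f| \leq |g|$ is built into the definition of the $L^p$-norm, and since $\bbp$ is a finite measure, $\bbI_A \in L^p$ for every $A \in \calg$. Next, I would verify $\sigma$-order continuity: if $(X_n)_{n \in \bbn} \subset L^p$ satisfies $X_n \downarrow 0$ almost surely, then dominated convergence with dominating function $X_1 \in L^p$ yields $\|X_n\|_p \downarrow 0$.

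Second, I would note that the norm topology $\tau$ on $L^p$ is stronger than convergence in probability, by Markov's inequality: $\bbp(|X_n - X| \geq \varepsilon) \leq \varepsilon^{-p} \|X_n - X\|_p^p$. Combining this with the first step shows both sets of hypotheses in Corollary \ref{cor-FTAP-Banach} are satisfied, so we obtain the equivalence of NFL$_\tau$ with the existence of a separating measure $\bbq \approx \bbp$ for $\calc$ (possibly with the additional condition on the Radon-Nikodym derivative).

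Third, I would match definitions: by the definition of NFLVR$_p$ in Section \ref{sec-random-var}, NFLVR$_p$ is precisely NFL$_{\tau_3}$, where $\tau_3$ is the norm topology on $L^p$; this is exactly the topology $\tau$ under consideration. Thus $\calk_0$ satisfies NFLVR$_p$ iff $\calk_0$ satisfies NFL$_\tau$, which gives (i) $\Leftrightarrow$ (ii).

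Finally, I would identify the Köthe dual: for $p \in [1,\infty)$ and $q \in (1,\infty]$ with $\frac{1}{p} + \frac{1}{q} = 1$, the Köthe dual $(L^p)^*$ equals $L^q$, and the positive cone $U_{++}^*$ corresponds to $L_{++}^q = \{ Y \in L^q_+ : \bbp(Y > 0) = 1 \}$. Hence the Radon-Nikodym derivative condition $\frac{d\bbq}{d\bbp} \in U_{++}^*$ in Corollary \ref{cor-FTAP-Banach} translates to $\frac{d\bbq}{d\bbp} \in L_{++}^q$, giving (i) $\Leftrightarrow$ (iii). There is no real obstacle here; the only mild point worth checking carefully is the identification of the Köthe dual in the endpoint case $p = 1$, $q = \infty$, where it is crucial that we use the Köthe dual (which is $L^\infty$) rather than the topological dual.
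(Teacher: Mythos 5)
Your proposal is correct and follows essentially the same route as the paper's proof: both reduce the statement to Corollary \ref{cor-FTAP-Banach} by checking that $L^p$ is a $\sigma$-order continuous Banach function space with K\"{o}the dual $L^q$ and that the $L^p$-norm topology dominates convergence in probability. You merely spell out the routine verifications (dominated convergence for $\sigma$-order continuity, Markov's inequality for the topology comparison) that the paper leaves implicit.
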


\begin{proof}
Taking into account the Riesz representation theorem, the Banach function space $U = L^p$ is $\sigma$-order continuous with K\"{o}the dual space $U^* = L^q$. Furthermore, the topology induced by $\| \cdot \|_{L^p}$ is stronger than the topology induced by convergence in probability. Hence, the statement is a consequence of Corollary \ref{cor-FTAP-Banach}.
\end{proof}

\begin{remark}
Note that Corollary \ref{cor-FTAP-Lp} is in accordance with \cite[Thm. 1.4]{Schach-02}, which is a reformulation of \cite[Thm. 3]{Kreps} for $L^p$-spaces.
\end{remark}

\begin{corollary}\label{cor-funda-trennend-1}
Suppose that $\calc$ is closed in $L^p$ with respect to $\| \cdot \|_{L^p}$ for some $p \in [1,\infty)$. Then the following statements are equivalent:
\begin{enumerate}
\item[(i)] $\calk_0$ satisfies NA.

\item[(ii)] There exists a separating measure $\bbq \approx \bbp$ for $\calc$.

\item[(iii)] There exists a separating measure $\bbq \approx \bbp$ for $\calc$ with Radon-Nikodym derivative $\frac{d\bbq}{d\bbp} \in L_{++}^q$, where $q \in (1,\infty]$ is such that $\frac{1}{p} + \frac{1}{q} = 1$.
\end{enumerate}
\end{corollary}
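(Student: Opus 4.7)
The plan is to obtain this corollary as a direct consequence of two results already established in the paper, so essentially no new work is required beyond chaining the equivalences.

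First I would invoke Corollary \ref{cor-NFLVR-NA-1}, which asserts that under the hypothesis that $\calc = (\calk_0 - L_+^0) \cap L^p$ is closed in $L^p$ with respect to the $\|\cdot\|_{L^p}$-topology, the conditions NFLVR$_p$ and NA are equivalent. This takes care of (i) $\Leftrightarrow$ NFLVR$_p$. The hypothesis of Corollary \ref{cor-NFLVR-NA-1} is exactly the closedness hypothesis in the present statement, so no further work is needed for this step.

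Next I would apply Corollary \ref{cor-FTAP-Lp} (the abstract FTAP on $L^p$-spaces), which, for $p \in [1,\infty)$, gives the equivalence of NFLVR$_p$ with (ii) and with (iii), where the separating measure $\bbq \approx \bbp$ in (iii) has Radon--Nikodym derivative $\frac{d\bbq}{d\bbp} \in L_{++}^q$ with $\frac{1}{p} + \frac{1}{q} = 1$. Combining this chain with the previous step yields (i) $\Leftrightarrow$ (ii) $\Leftrightarrow$ (iii), which is the desired statement.

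There is no substantive obstacle to anticipate: everything reduces to verifying that the two cited corollaries are applicable, which is immediate from the hypothesis. The proof therefore consists of a single sentence citing Corollaries \ref{cor-NFLVR-NA-1} and \ref{cor-FTAP-Lp}.
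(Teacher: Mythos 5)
Your proposal is correct and coincides with the paper's own proof, which likewise derives the statement by chaining Corollary \ref{cor-NFLVR-NA-1} (NA $\Leftrightarrow$ NFLVR$_p$ under the closedness hypothesis) with Corollary \ref{cor-FTAP-Lp} (NFLVR$_p$ $\Leftrightarrow$ (ii) $\Leftrightarrow$ (iii)). Nothing further is needed.
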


\begin{proof}
This is a consequence of Corollaries \ref{cor-NFLVR-NA-1} and \ref{cor-FTAP-Lp}. 
\end{proof}

In the following result we consider $U = L^1$. Then the convex cone $\calc \subset L^1$ is given by
\begin{align*}
\calc = (\calk_0 - L_+^0) \cap L^1.
\end{align*}

\begin{corollary}\label{cor-funda-trennend-2}
Suppose that $\calk_0 - L_+^0$ is closed in $L^0$. Then the following statements are equivalent:
\begin{enumerate}
\item[(i)] $\calk_0$ satisfies NA.

\item[(ii)] There exists a separating measure $\bbq \approx \bbp$ for $\calc$.

\item[(iii)] There exists a separating measure $\bbq \approx \bbp$ for $\calc$ with bounded Radon-Nikodym derivative $\frac{d\bbq}{d\bbp} \in L_{++}^{\infty}$.
\end{enumerate}
\end{corollary}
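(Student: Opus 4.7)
The plan is to observe that this corollary is essentially the case $p=1$ of Corollary \ref{cor-funda-trennend-1}, only that the closedness hypothesis is now imposed at the $L^0$-level rather than at the $L^1$-level, so one must bridge the two via the closedness result of Section \ref{sec-random-var}. Concretely, I would chain together two already-proven results: Corollary \ref{cor-NFLVR-NA-2} (which transfers closedness of $\calk_0 - L_+^0$ in $L^0$ to the equivalence of NA and NFLVR$_p$) and Corollary \ref{cor-FTAP-Lp} (the abstract FTAP on $L^p$-spaces).

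First, I would apply Corollary \ref{cor-NFLVR-NA-2} with $p = 1$. Since by hypothesis $\calk_0 - L_+^0$ is closed in $L^0$, this yields directly that (i) is equivalent to $\calk_0$ satisfying NFLVR$_1$. Second, I would invoke Corollary \ref{cor-FTAP-Lp} with $p = 1$, so that the conjugate exponent is $q = \infty$ and $L^q_{++} = L^{\infty}_{++}$. This gives the equivalence between NFLVR$_1$, the existence of a separating measure $\bbq \approx \bbp$ for $\calc = (\calk_0 - L_+^0) \cap L^1$, and the existence of such a separating measure with $\frac{d\bbq}{d\bbp} \in L^{\infty}_{++}$. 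Concatenating the two chains of equivalences establishes (i) $\Leftrightarrow$ (ii) $\Leftrightarrow$ (iii).

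There is no real obstacle: both key ingredients are in place, and the only care needed is matching the parameter $p=1$ (hence $q=\infty$) in Corollary \ref{cor-FTAP-Lp} so that the Radon--Nikodym derivative lies in $L^{\infty}_{++}$, which is exactly the boundedness claimed in (iii). Hence the proof reduces to a one-line citation of Corollaries \ref{cor-NFLVR-NA-2} and \ref{cor-FTAP-Lp}.
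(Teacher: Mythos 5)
Your proposal is correct and matches the paper's own proof exactly: the paper likewise deduces the corollary from Corollary \ref{cor-NFLVR-NA-2} and Corollary \ref{cor-FTAP-Lp} with $p=1$ and $q=\infty$. No gaps.
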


\begin{proof}
This is a consequence of Corollaries \ref{cor-NFLVR-NA-2} and \ref{cor-FTAP-Lp} with $p=1$ and $q=\infty$.
\end{proof}

Now, let us consider some examples of Banach function spaces which are not $\sigma$-order continuous, and where, as a consequence, not every continuous linear functional is an integral.

\begin{example}[Space of bounded functions]
Let $U = L^{\infty}$. By the Yosida-Hewitt theorem (see \cite{Yosida-Hewitt}) every continuous linear functional $x' \in (L^{\infty})'$ is of the form
\begin{align}\label{Yosida-Hewitt}
x'(X) = \bbe[XY] + \int_{\Omega} X \, d \mu, \quad X \in L^{\infty}
\end{align}
with $Y \in L^1$ and an absolutely continuous finitely additive signed measure $\mu \ll \bbp$ on $(\Omega,\calg)$ which is singular in the sense that for some sequence $(A_k)_{k \in \bbn} \subset \calg$ we have $\mu(A_k) \downarrow 0$ for $k \to \infty$ and $\mu(\Omega \setminus A_k) = 0$ for all $k \in \bbn$.
\end{example}

\begin{example}[Orlicz spaces]
Let $\psi : \bbr_+ \to \bbr_+$ be a locally integrable function. Then $A : \bbr_+ \to \bbr_+$ given by $A(u) = \int_0^u \psi(t) dt$ is called a Young function. The Orlicz space $L_A$ is defined as the space of all equivalence classes of random variables $X$ such that
\begin{align*}
\bbe[A(k|X|)] < \infty
\end{align*}
for some $k > 0$. Then $L_A$ is a Banach function space equipped with the norm
\begin{align*}
\| X \|_A = \inf \{ k > 0 : \bbe[A(|X|/k)] \leq 1 \}.
\end{align*}
The function $\bar{A} : \bbr_+ \to [0,\infty]$ given by
\begin{align*}
\bar{A}(u) := \sup \{ uv - A(v) : v \in \bbr_+ \}
\end{align*}
is called the complementary function of $A$. Let $N_A$ be the quotient space $N_A = L_A / M_A$ equipped with the quotient norm $\| \cdot \|_{N_A}$, where $M_A$ is the closure of all elementary random variables with respect to $\| \cdot \|_A$. According to \cite[Sec. 5]{Fernandez}, every continuous linear functional $x' \in L_A'$ is of the form
\begin{align*}
x'(X) = \bbe[XY] + \int_{\Omega} X \, d \mu, \quad X \in L_A
\end{align*}
with $Y \in L_{\bar{A}}$ and an absolutely continuous finitely additive signed measure $\mu \ll \bbp$ such that for some $f \in N_A$ with $f \geq 0$ and $\| f \|_{N_A} \leq 1$ we have the property that for all $B \in \calg$ with $|\mu|(B) \neq 0$ it follows that $\| f \bbI_B \|_{N_A} = 1$.
\end{example}

\begin{remark}
Let us have a closer look at the situation $U = L^{\infty}$. Suppose that the convex cone $\calk_0$ satisfies NFLVR. By Theorem \ref{thm-FTAP-Banach} we have $\cals(\calc) \neq \emptyset$. However, it is not clear whether a separating measure exists because continuous linear functionals are generally of the form (\ref{Yosida-Hewitt}). Now, assume that the convex cone $\calk_0$ has a structure as in \cite{Kabanov}. Then, surprisingly, a separating measure $\bbq \approx \bbp$ for $\calc$ exists, see \cite[Thm. 1.1 and Thm. 1.2]{Kabanov}; and hence, by Theorem \ref{thm-FTAP-Banach} we even have $\cals(\calc) \cap U_c' \neq \emptyset$.
\end{remark}

For the rest of this section, we consider the discrete time setting. Let $(\Omega,\calf,\bbf,\bbp)$ be a filtered probability space with discrete filtration $\bbf = (\calf_t)_{t=0,\ldots,T}$ for some $T \in \bbn$. Let $\bbx = \{ X^1,\ldots,X^d \}$ be a discounted market consisting of $d \in \bbn$ assets $X^i = (X_t^i)_{t=0,\ldots,T}$ for $i=1,\ldots,d$. We assume that $X^i \geq 0$ for each $i=1,\ldots,d$. Consider the convex cone
\begin{align*}
\calk_0 := \bigg\{ \sum_{t=1}^T \xi_t \cdot (X_t - X_{t-1}) : \xi \text{ is a strategy} \bigg\},
\end{align*}
where every predictable process $\xi$ (that is $\xi_t$ is $\calf_{t-1}$-measurable for each $t=1,\ldots,T$) is called a strategy. As usual, we say that an equivalent probability measure $\bbq \approx \bbp$ is an \emph{equivalent martingale measure (EMM)} for $\bbx$ if $X^1,\ldots,X^d$ are $\bbq$-martingales. The following result extends the well-known no-arbitrage result in discrete time (see, for example \cite{FS} or \cite{Kabanov-Stricker}) by additionally providing a characterization in terms of separating measures. Now, the convex cone $\calc \subset L^1$ is given by
\begin{align*}
\calc = ( \calk_0 - L_+^0 ) \cap L^1.
\end{align*}

\begin{theorem}[FTAP for discrete time models]\label{thm-discrete-time}
The following statements are equivalent:
\begin{enumerate}
\item[(i)] $\calk_0$ satisfies NA.

\item[(ii)] There exists an EMM $\bbq \approx \bbp$ for $\bbx$.

\item[(iii)] There exists an EMM $\bbq \approx \bbp$ for $\bbx$ with bounded Radon-Nikodym derivative $\frac{d \bbq}{d \bbp} \in L_{++}^{\infty}$.

\item[(iv)] There exists a separating measure $\bbq \approx \bbp$ for $\calc$.

\item[(v)] There exists a separating measure $\bbq \approx \bbp$ for $\calc$ with bounded Radon-Nikodym derivative $\frac{d \bbq}{d \bbp} \in L_{++}^{\infty}$.
\end{enumerate}
\end{theorem}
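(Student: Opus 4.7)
The plan is to weld the classical discrete-time fundamental theorem of asset pricing onto the abstract machinery developed in Section \ref{sec-BFS}, working with the Banach function space $U = L^1$. The equivalences (i) $\Leftrightarrow$ (ii) $\Leftrightarrow$ (iii) are precisely the Dalang-Morton-Willinger theorem in the case of nonnegative assets, so I would simply quote this statement from \cite{FS} or \cite{Kabanov-Stricker}. The genuinely new content lies in inserting the separating-measure characterisations (iv) and (v) into the cycle, and this is where the abstract framework pays off.

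The easy half of the new equivalences is (v) $\Rightarrow$ (iv), which is immediate, and (iv) $\Rightarrow$ (i). For the latter, Corollary \ref{cor-FTAP-Lp} with $p = 1$ turns the existence of a separating measure for $\calc$ into NFLVR$_1$, and Proposition \ref{prop-NFL-NA} then yields NA. Note that no closedness hypothesis on $\calk_0 - L_+^0$ enters this direction.

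For the remaining direction (i) $\Rightarrow$ (v) (or equivalently (iii) $\Rightarrow$ (v)), my strategy is to invoke the classical Schachermayer/Kabanov-Stricker result that under NA the set $\calk_0 - L_+^0$ is closed in $L^0$ with respect to convergence in probability (see, e.g., \cite{FS} or \cite{Kabanov-Stricker}). Once this structural input is available, Corollary \ref{cor-funda-trennend-2} applies verbatim and immediately produces a separating measure $\bbq \approx \bbp$ for $\calc$ with bounded Radon-Nikodym derivative $\frac{d\bbq}{d\bbp} \in L_{++}^{\infty}$, which is (v).

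The main obstacle is the classical closedness of $\calk_0 - L_+^0$ under NA; its proof proceeds by a somewhat delicate backward induction on $T$ together with measurable-selection arguments and forms the technical heart of every clean discrete-time FTAP. Consistently with the stated philosophy of the paper, namely to derive the links between no-arbitrage concepts from the abstract theory plus well-known inputs from stochastic analysis, the cleanest presentation is to cite this closedness result from the literature rather than reprove it, after which all five equivalences fall out in a few lines.
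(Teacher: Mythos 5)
Your proposal is correct and follows essentially the same route as the paper: the classical equivalences (i)$\Leftrightarrow$(ii)$\Leftrightarrow$(iii) are cited from \cite{FS}/\cite{Kabanov-Stricker}, and (i)$\Leftrightarrow$(iv)$\Leftrightarrow$(v) is obtained by combining the Kabanov--Stricker closedness of $\calk_0 - L_+^0$ in $L^0$ under NA with Corollary \ref{cor-funda-trennend-2}. Your explicit remark that the direction (iv)$\Rightarrow$(i) goes through Corollary \ref{cor-FTAP-Lp} without any closedness hypothesis is a slightly more careful presentation of the same argument, avoiding any appearance of circularity.
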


\begin{proof}
(i) $\Leftrightarrow$ (ii) $\Leftrightarrow$ (iii): See \cite[Thm. 5.16]{FS} or \cite[Thm. 1]{Kabanov-Stricker}.

\noindent(i) $\Leftrightarrow$ (iv) $\Leftrightarrow$ (v): This follows by combining Corollary \ref{cor-funda-trennend-2} and \cite[Thm. 1]{Kabanov-Stricker}.
\end{proof}

\section{Financial market with semimartingales}\label{sec-stoch-proc}

In this section we consider a financial market with nonnegative semimartingales which does not need to have a num\'{e}raire. We will derive consequences for the no-arbitrage concepts considered so far; in particular, regarding self-financing portfolios.

Let $(\Omega,\calf,(\calf_t)_{t \in \bbr_+},\bbp)$ be a stochastic basis satisfying the usual conditions, see \cite[Def. I.1.3]{Jacod-Shiryaev}. Furthermore, we assume that $\calf_0$ is $\bbp$-trivial. Then every $\calf_0$-measurable random variable is $\bbp$-almost surely constant. Let $\bbl$ be the space of all equivalence classes of adapted, c\`{a}dl\`{a}g processes $X : \Omega \times \bbr_+ \to \bbr$, where two processes $X$ and $Y$ are identified if $X$ and $Y$ are indistinguishable, that is if almost all paths of $X$ and $Y$ coincide; see \cite[I.1.10]{Jacod-Shiryaev}. Let $(\bbk_{\alpha})_{\alpha \geq 0}$ be a family of subsets of $\bbl$ such that for each $\alpha \geq 0$ and each $X \in \bbk_{\alpha}$ we have $X_0 = \alpha$. Throughout this section, we make the following assumptions.

\begin{assumption}\label{ass-convex-processes-0}
We assume that $\bbk_0$ is a convex cone.
\end{assumption}

\begin{assumption}\label{ass-convex-processes-1}
We assume that
\begin{align}\label{funda-eqn}
a X + b Y \in \bbk_{a \alpha + b \beta}
\end{align}
for all $a,b \in \bbr_+$, $\alpha,\beta > 0$ with $a \alpha + b \beta > 0$ and $X \in \bbk_{\alpha}$, $Y \in \bbk_{\beta}$.
\end{assumption}

The following remark provides a sufficient condition ensuring that Assumptions \ref{ass-convex-processes-0} and \ref{ass-convex-processes-1} are fulfilled.

\begin{remark}\label{rem-together}
Suppose that 
\begin{align}\label{K-cone-together-proc}
a X + b Y \in \bbk_{a \alpha + b \beta}
\end{align}
for all $a,b \in \bbr_+$, $\alpha,\beta \in \bbr_+$ and $X \in \bbk_{\alpha}$, $Y \in \bbk_{\beta}$. Then $\bbk_0$ is a convex cone, and we have (\ref{K-cone-together-proc}) for all $a,b \in \bbr_+$, $\alpha,\beta > 0$ with $a \alpha + b \beta > 0$ and $X \in \calk_{\alpha}$, $Y \in \calk_{\beta}$.
\end{remark}

The following example shows that the framework considered in \cite[Appendix A]{KKS} is contained in our present setting.

\begin{example}
Let $\bbx \subset \bbl$ be a convex set of processes such that $X_0 = 0$ and $X \geq -1$ for each $X \in \bbx$. We define the family $(\bbk_{\alpha})_{\alpha \geq 0}$ as
\begin{align*}
\bbk_0 &:= \bbr_+ \bbx,
\\ \bbk_{\alpha} &:= \alpha(1 + \bbx), \quad \alpha > 0.
\end{align*}
Then Assumptions \ref{ass-convex-processes-0} and \ref{ass-convex-processes-1} are fulfilled. Indeed, the set $\bbk_0$ is a convex cone. Let $a,b \in \bbr_+$, $\alpha,\beta > 0$ with $a \alpha + b \beta > 0$ and $X \in \bbk_{\alpha}$, $Y \in \bbk_{\beta}$ be arbitrary. Then there are $Z,W \in \bbx$ such that $X = \alpha(1+Z)$ and $Y = \beta(1+W)$. Since $\bbx$ is convex, we obtain
\begin{align*}
a X + b Y &= a \alpha (1+Z) + b \beta (1+W)
\\ &= a \alpha + b \beta + (a \alpha + b \beta) \bigg( \frac{a \alpha}{a \alpha + b \beta} Z + \frac{b \beta}{a \alpha + b \beta} W \bigg) \in \bbk_{a \alpha + b \beta},
\end{align*}
showing that (\ref{funda-eqn}) is fulfilled.
\end{example}

As we will see, in all examples which we consider in this section later on, relation (\ref{K-cone-together-proc}) from Remark \ref{rem-together} will be satisfied. Now, let $T \in (0,\infty)$ be a deterministic terminal time. We define the family $(\calk_{\alpha})_{\alpha \geq 0}$ of subsets of $L^0 = L^0(\Omega,\calf_T,\bbp)$ as
\begin{align}\label{constr}
\calk_{\alpha} := \{ X_T : X \in \bbk_{\alpha} \}, \quad \alpha \geq 0.
\end{align}
Then we are in the framework of Section \ref{sec-random-var}. The next result shows that Assumptions \ref{ass-convex-0} and \ref{ass-convex-1} are fulfilled.

\begin{lemma}\label{lemma-ass-processes-RV}
The following statements are true:
\begin{enumerate}
\item $\calk_0$ is a convex cone.

\item We have
\begin{align*}
a\xi + b\eta \in \calk_{a\alpha + b\beta}
\end{align*}
for all $a,b \in \bbr_+$, $\alpha,\beta > 0$ with $a \alpha + b \beta > 0$ and $\xi \in \calk_{\alpha}$, $\eta \in \calk_{\beta}$.
\end{enumerate}
\end{lemma}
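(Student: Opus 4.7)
The plan is to reduce both statements directly to the corresponding properties of the process-level family $(\bbk_\alpha)_{\alpha \geq 0}$ via the definition $\calk_\alpha = \{ X_T : X \in \bbk_\alpha \}$, using that the evaluation map $\bbl \to L^0$, $X \mapsto X_T$, is linear.

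For statement (1), I would take arbitrary $\xi_1, \xi_2 \in \calk_0$ and $a, b \in \bbr_+$; by the construction (\ref{constr}) there exist $X^{(1)}, X^{(2)} \in \bbk_0$ with $\xi_i = X_T^{(i)}$. Since $\bbk_0$ is a convex cone by Assumption \ref{ass-convex-processes-0}, we have $aX^{(1)} + bX^{(2)} \in \bbk_0$, and then $a\xi_1 + b\xi_2 = (aX^{(1)} + bX^{(2)})_T \in \calk_0$. The only small point to note is that $aX^{(1)} + bX^{(2)}$ is well-defined as an element of $\bbl$ since $\bbl$ is a vector space of adapted càdlàg processes under pointwise operations.

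For statement (2), I would argue analogously: given $\xi \in \calk_\alpha$ and $\eta \in \calk_\beta$ with $\alpha, \beta > 0$, there exist $X \in \bbk_\alpha$ and $Y \in \bbk_\beta$ with $\xi = X_T$ and $\eta = Y_T$. Under the hypothesis $a\alpha + b\beta > 0$, Assumption \ref{ass-convex-processes-1} yields $aX + bY \in \bbk_{a\alpha+b\beta}$, whence $a\xi + b\eta = (aX+bY)_T \in \calk_{a\alpha+b\beta}$, as required.

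There is no real obstacle here; the lemma is essentially a transport of the structural cone/convexity properties from the level of processes to the level of terminal values along the linear terminal-evaluation map. The only thing worth making explicit in the write-up is that $\bbk_0 \subset \bbl$ is a vector subspace stable under positive linear combinations (the convex cone property), which is exactly the content of Assumption \ref{ass-convex-processes-0}, so the induced family $(\calk_\alpha)_{\alpha \geq 0}$ inherits the hypotheses required by Section \ref{sec-NA-tvs}.
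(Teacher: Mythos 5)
Your proposal is correct and follows essentially the same route as the paper, which likewise observes that the terminal-evaluation map $\varphi : \bbl \to L^0$, $\varphi(X) := X_T$, is linear with $\varphi(\bbk_{\alpha}) = \calk_{\alpha}$, and transports Assumptions \ref{ass-convex-processes-0} and \ref{ass-convex-processes-1} along it. The only cosmetic remark is that in your closing paragraph you call $\bbk_0$ a ``vector subspace stable under positive linear combinations''; a convex cone need not be a subspace, but your actual argument only uses the cone property, so nothing is affected.
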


\begin{proof}
Note that $\varphi : \bbl \to L^0$ given by $\varphi(X) := X_T$ is a linear operator such that $\varphi(\bbk_{\alpha}) = \calk_{\alpha}$ for each $\alpha \geq 0$. Therefore, $\calk_0$ is also a convex cone. Let $a,b \in \bbr_+$, $\alpha,\beta > 0$ with $a \alpha + b \beta > 0$ and $\xi \in \calk_{\alpha}$, $\eta \in \calk_{\beta}$ be arbitrary. Then there exist $X \in \bbk_{\alpha}$ and $Y \in \bbk_{\beta}$ such that $\xi = \varphi(X)$ and $\eta = \varphi(Y)$. Therefore, by the linearity of $\varphi$ we obtain
\begin{align*}
a\xi + b\eta = a \varphi(X) + b \varphi(Y) = \varphi(aX + bY) \in \varphi(\bbk_{a \alpha + b \beta}) = \calk_{a \alpha + b \beta},
\end{align*}
completing the proof.
\end{proof}

As in Section \ref{sec-NA-tvs}, we define the family $(\calb_{\alpha})_{\alpha \geq 0}$ of convex, semi-solid subsets of $L_+^0$ as
\begin{align*}
\calb_{\alpha} := (\calk_{\alpha} - L_+^0) \cap L_+^0, \quad \alpha \geq 0,
\end{align*} 
and we set $\calb := \calb_1$. Furthermore, we define the convex cone $\calc \subset L^{\infty}$ as
\begin{align*}
\calc := (\calk_0 - L_+^0) \cap L^{\infty}.
\end{align*}
Now, we consider particular examples for the family of processes $(\bbk_{\alpha})_{\alpha \geq 0}$. Let $I \neq \emptyset$ be an arbitrary index set, and let $(S^i)_{i \in I}$ be a family of semimartingales. We assume that $S^i \geq 0$ for each $i \in I$. We define the \emph{market} $\bbs := \{ S^i : i \in I \}$. For an $\bbr^d$-valued semimartingale $X$ we denote by $L(X)$ the set of all $X$-integrable processes in the sense of vector integration; see \cite{Shiryaev-Cherny} or \cite[Sec. III.6]{Jacod-Shiryaev}. For $\delta \in L(X)$ we denote by $\delta \bdot X$ the stochastic integral according to \cite{Shiryaev-Cherny}. For a finite set $F \subset I$ we define the multi-dimensional semimartingale $S^F := (S^i)_{i \in F}$.

\begin{definition}\label{def-strategy}
We call a process $\delta = (\delta^i)_{i \in I}$ a \emph{strategy} for $\bbs$ if the set $F \subset I$ given by $F := \{ i \in I : \delta^i \neq 0 \}$ is finite and we have $\delta^F \in L(S^F)$.
\end{definition}

\begin{definition}
We denote by $\Delta(\bbs)$ the set of all strategies $\delta$ for $\bbs$.
\end{definition}

\begin{definition}
For a strategy $\delta \in \Delta(\bbs)$ we set
\begin{align*}
\delta \bdot S := \delta^F \bdot S^F,
\end{align*}
where $F \subset I$ denotes the finite set from Definition \ref{def-strategy}.
\end{definition}

\begin{theorem}\label{thm-bil-2}\cite[Thm. 4.3]{Shiryaev-Cherny}
Let $\delta_1,\delta_2 \in \Delta(\bbs)$ and $\alpha_1,\alpha_2 \in \bbr$ be arbitrary. Then we have
\begin{align*}
\alpha_1 \delta_1 + \alpha_2 \delta_2 \in \Delta(\bbs)
\end{align*}
and
\begin{align*}
( \alpha_1 \delta_1 + \alpha_2 \delta_2 ) \bdot S = \alpha_1 ( \delta_1 \bdot S ) + \alpha_2 (\delta_2 \bdot S).
\end{align*}
\end{theorem}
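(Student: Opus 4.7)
The plan is to reduce the statement to the bilinearity of vector stochastic integration with respect to a \emph{single} multidimensional semimartingale (which is the cited Shiryaev--Cherny Theorem~4.3 applied in the finite dimension $|F|$ below), and then to handle the fact that $\delta_1$ and $\delta_2$ may have different finite supports.

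First I would set $F_j := \{ i \in I : \delta_j^i \neq 0 \}$ for $j=1,2$ and $F := F_1 \cup F_2$, which is finite by Definition~\ref{def-strategy}. The key structural lemma I would invoke is: if $G \subset F$ are finite index sets and $\eta \in L(S^G)$, then the extension $\tilde\eta$ of $\eta$ obtained by filling in zeros at coordinates in $F \setminus G$ satisfies $\tilde\eta \in L(S^F)$ and $\tilde\eta \bdot S^F = \eta \bdot S^G$. This is built into the construction of vector stochastic integration (zero integrands contribute nothing to the predictable characteristics used to define integrability); a precise reference is the foundational setup in \cite{Shiryaev-Cherny}, see also \cite[Sec.~III.6]{Jacod-Shiryaev}.

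Applying this lemma, for $j=1,2$ I obtain processes $\tilde\delta_j \in L(S^F)$ (the extensions of $\delta_j^{F_j}$ by zero to $F$) satisfying $\tilde\delta_j \bdot S^F = \delta_j \bdot S$. Now Shiryaev--Cherny Theorem~4.3 applied in the finite-dimensional case with integrator $S^F$ yields
\begin{align*}
\alpha_1 \tilde\delta_1 + \alpha_2 \tilde\delta_2 \in L(S^F)
\end{align*}
together with the bilinearity identity
\begin{align*}
(\alpha_1 \tilde\delta_1 + \alpha_2 \tilde\delta_2) \bdot S^F = \alpha_1 (\tilde\delta_1 \bdot S^F) + \alpha_2 (\tilde\delta_2 \bdot S^F).
\end{align*}
Finally, setting $F' := \{ i \in I : \alpha_1 \delta_1^i + \alpha_2 \delta_2^i \neq 0 \}$, we have $F' \subset F$ (possibly strictly, if cancellations occur), and reapplying the structural lemma in the opposite direction, restricting from $F$ down to $F'$, shows that $\alpha_1 \delta_1 + \alpha_2 \delta_2 \in \Delta(\bbs)$ with the claimed integral formula.

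The only nonroutine step is the extension-by-zero principle for vector stochastic integration; everything else is bookkeeping once that is in place. I expect this is the reason the authors simply invoke \cite{Shiryaev-Cherny} rather than proving the statement from scratch, since the extension principle is already part of the construction of vector integration in that reference.
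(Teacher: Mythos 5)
The paper offers no proof of this statement at all: it is imported wholesale by the citation to Shiryaev--Cherny, so there is nothing to compare your argument against line by line. What your write-up supplies is exactly the reduction that the citation leaves implicit, and it is correct. The cited Theorem~4.3 is a statement about a \emph{fixed} finite-dimensional integrator, whereas the paper's $\Delta(\bbs)$ allows an arbitrary index set $I$ with strategies of varying finite support, so passing to $F = F_1 \cup F_2$, extending both integrands by zero, applying the finite-dimensional bilinearity to $S^F$, and then restricting to the (possibly smaller) support $F'$ of the combination is precisely the bookkeeping needed to make the theorem as stated follow from the reference. The one step that is not purely formal is the extension-by-zero principle: since $L(S^F)$ is \emph{not} defined componentwise, you cannot simply say a zero coordinate ``does nothing'' by fiat; you need the characterization of integrability via the predictable characteristics (e.g.\ \cite[III.6.17]{Jacod-Shiryaev}), where the quantities $H^\top c H$, $H^\top b$ and the jump terms $H^\top x$ visibly depend only on the coordinates where $H$ is nonzero, so that integrability and the value of the integral are unchanged under padding with zeros. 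You gesture at this correctly, and with that reference made explicit the argument is complete; it also covers the degenerate case $F' = \emptyset$, where the combination is the zero strategy with zero integral.
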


\begin{definition}
For $\alpha \in \bbr$ and a strategy $\delta \in \Delta(\bbs)$ we define the \emph{integral process} $I^{\alpha,\delta} := \alpha + \delta \bdot S$.
\end{definition}

\begin{definition}
For a strategy $\delta \in \Delta(\bbs)$ we define the \emph{portfolio} $S^{\delta} := \delta \cdot S$, where we use the short-hand notation
\begin{align*}
\delta \cdot S := \sum_{i \in F} \delta^i S^i
\end{align*}
with $F \subset I$ denoting the finite set from Definition \ref{def-strategy}.
\end{definition}

\begin{definition}
A strategy $\delta \in \Delta(\bbs)$ and the corresponding portfolio $S^{\delta}$ are called \emph{self-financing} for $\bbs$ if $S^{\delta} = S_0^{\delta} + \delta \bdot S$.
\end{definition}

\begin{definition}
We denote by $\Delta_{\sfi}(\bbs)$ the set of all self-financing strategies for $\bbs$.
\end{definition}

The following auxiliary result is obvious.

\begin{lemma}
For a strategy $\delta \in \Delta(\bbs)$ the following statements are equivalent:
\begin{enumerate}
\item[(i)] We have $\delta \in \Delta_{\sfi}(\bbs)$.

\item[(ii)] We have $S^{\delta} = I^{\alpha,\delta}$, where $\alpha = S_0^{\delta}$.
\end{enumerate}
\end{lemma}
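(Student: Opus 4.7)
The plan is to simply unfold the two definitions and observe that they coincide once $\alpha = S_0^{\delta}$ is substituted. By the definition of a self-financing strategy, $\delta \in \Delta_{\sfi}(\bbs)$ means precisely $S^{\delta} = S_0^{\delta} + \delta \bdot S$. On the other hand, by the definition of the integral process, $I^{\alpha,\delta} = \alpha + \delta \bdot S$, so with the choice $\alpha = S_0^{\delta}$ we get $I^{\alpha,\delta} = S_0^{\delta} + \delta \bdot S$.

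Thus the implication (i) $\Rightarrow$ (ii) is obtained by rewriting $S_0^{\delta} + \delta \bdot S$ as $I^{S_0^{\delta},\delta}$, and (ii) $\Rightarrow$ (i) by expanding $I^{S_0^{\delta},\delta}$ back to $S_0^{\delta} + \delta \bdot S$. No structural argument about semimartingales or vector stochastic integration is needed; the author even labels the lemma ``obvious''.

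There is no real obstacle here. The only point worth mentioning is that one should be careful that the equality $S^{\delta} = I^{\alpha,\delta}$ in (ii) forces $\alpha = S_0^{\delta}$ by evaluating at time $t=0$ (since $(\delta \bdot S)_0 = 0$), so that the specification $\alpha = S_0^{\delta}$ in (ii) is consistent rather than an extra assumption. Accordingly the written proof can be a two-line chain of equalities invoking only the definitions of $S^{\delta}$, $I^{\alpha,\delta}$ and $\Delta_{\sfi}(\bbs)$.
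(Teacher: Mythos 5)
Your proof is correct and matches the paper's treatment: the paper explicitly labels this lemma as obvious and omits the proof, and your unfolding of the definitions of $\Delta_{\sfi}(\bbs)$ and $I^{\alpha,\delta}$ with $\alpha = S_0^{\delta}$ is exactly the intended argument. The side remark that $(\delta \bdot S)_0 = 0$ forces $\alpha = S_0^{\delta}$ is a correct and harmless consistency check.
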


Recall that a process $X$ is called admissible if $X \geq -a$ for some constant $a \in \bbr_+$.

\begin{definition}
We introduce the following families:
\begin{enumerate}
\item We define the family of all \emph{integral processes} $(\bbi_{\alpha}(\bbs))_{\alpha \geq 0}$ as
\begin{align*}
\bbi_{\alpha}(\bbs) := \{ I^{\alpha,\delta} : \delta \in \Delta(\bbs) \}, \quad \alpha \geq 0.
\end{align*}

\item We define the family of all \emph{admissible integral processes} $(\bbi_{\alpha}^{\adm}(\bbs))_{\alpha \geq 0}$ as
\begin{align*}
\bbi_{\alpha}^{\adm}(\bbs) := \{ X \in \bbi_{\alpha}(\bbs) : X \text{ is admissible} \}, \quad \alpha \geq 0.
\end{align*}

\item We define the family of all \emph{nonnegative integral processes} $(\bbi_{\alpha}^+(\bbs))_{\alpha \geq 0}$ as
\begin{align*}
\bbi_{\alpha}^+(\bbs) := \{ X \in \bbi_{\alpha}(\bbs) : X \geq 0 \}, \quad \alpha \geq 0.
\end{align*}
\item We denote by $(\cali_{\alpha}(\bbs))_{\alpha \geq 0}$, $(\cali_{\alpha}^{\adm}(\bbs))_{\alpha \geq 0}$ and $(\cali_{\alpha}^{\adm}(\bbs))_{\alpha \geq 0}$ the respective families of random variables defined according to (\ref{constr}).

\end{enumerate}
\end{definition}

\begin{remark}
Consider the particular case where $S^i \equiv 1$ for some $i \in I$. Then the market $\bbs$ can be interpreted as discounted price processes of risky assets with respect to some savings account, and the families $(\bbi_{\alpha}(\bbs))_{\alpha \geq 0}$, $(\bbi_{\alpha}^{\adm}(\bbs))_{\alpha \geq 0}$, $(\bbi_{\alpha}^+(\bbs))_{\alpha \geq 0}$ can be regarded as wealth processes in this case.
\end{remark}

\begin{lemma}\label{lemma-W-mult-const}
Let $a, b \in \bbr$, $\alpha,\beta \in \bbr$ and $\delta, \vartheta \in \Delta(\bbs)$ be arbitrary. Then we have
\begin{align*}
I^{a \alpha + b \beta, a \delta + b \vartheta} = a I^{\alpha,\delta} + b I^{\beta,\vartheta}.
\end{align*}
\end{lemma}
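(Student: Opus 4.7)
The plan is to simply unfold the definition of $I^{\alpha,\delta}$ and apply the bilinearity of the vector stochastic integral as already recorded in Theorem \ref{thm-bil-2}. Specifically, by the definition just above the lemma, the left-hand side equals
\begin{align*}
I^{a\alpha + b\beta,\, a\delta + b\vartheta} = (a\alpha + b\beta) + (a\delta + b\vartheta) \bdot S,
\end{align*}
while the right-hand side equals
\begin{align*}
a I^{\alpha,\delta} + b I^{\beta,\vartheta} = a\alpha + b\beta + a(\delta \bdot S) + b(\vartheta \bdot S).
\end{align*}
Hence the identity reduces to showing $(a\delta + b\vartheta) \bdot S = a(\delta \bdot S) + b(\vartheta \bdot S)$.

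First I would invoke Theorem \ref{thm-bil-2} to conclude that $a\delta + b\vartheta \in \Delta(\bbs)$, so that the stochastic integral on the left is well-defined in the sense of Definition \ref{def-strategy} and the preceding conventions. Then the same theorem yields exactly the required linearity relation for the integrals. Putting the two chains of equalities together gives the claim.

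The only minor subtlety worth flagging is that $\delta$ and $\vartheta$ may have different (finite) supports $F_\delta, F_\vartheta \subset I$, so that one has to be careful about which multi-dimensional semimartingale one integrates against. This is already handled by Theorem \ref{thm-bil-2} (one passes to the common finite set $F_\delta \cup F_\vartheta$ and extends the integrands by zero), and there is no further obstacle. In short, the lemma is an immediate consequence of Theorem \ref{thm-bil-2}, and the proof will be a one-line calculation after unfolding the definitions.
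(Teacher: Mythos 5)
Your proof is correct and is essentially identical to the paper's: both unfold the definition $I^{\alpha,\delta} = \alpha + \delta \bdot S$ and reduce the claim to the bilinearity statement of Theorem \ref{thm-bil-2}. The remark about differing supports is a fair observation, but as you note it is already absorbed into Theorem \ref{thm-bil-2}, so nothing further is needed.
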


\begin{proof}
Using Theorem \ref{thm-bil-2} we have
\begin{align*}
a I^{\alpha,\delta} + b I^{\beta,\vartheta} &= a ( \alpha + \delta \bdot S ) + b ( \beta + \vartheta \bdot S )
\\ &= ( a \alpha + b \beta ) + a (\delta \bdot S) + b (\vartheta \bdot S)
\\ &= ( a \alpha + b \beta ) + (a \delta + b \vartheta) \bdot S
\\ &= I^{a \alpha + b \beta, a \delta + b \vartheta},
\end{align*}
completing the proof.
\end{proof}

Recall that we had defined the family $(\calb_{\alpha})_{\alpha \geq 0}$ of convex, semi-solid subsets of $L_+^0$ as
\begin{align*}
\calb_{\alpha} := (\calk_{\alpha} - L_+^0) \cap L_+^0, \quad \alpha \geq 0.
\end{align*}

\begin{lemma}\label{lemma-B0-B-alpha-I}
Suppose that $(\calk_{\alpha})_{\alpha \geq 0}$ is one of the families
\begin{align*}
(\cali_{\alpha}(\bbs))_{\alpha \geq 0}, (\cali_{\alpha}^{\adm}(\bbs))_{\alpha \geq 0}, (\cali_{\alpha}^+(\bbs))_{\alpha \geq 0}.
\end{align*}
Then we have $\calb_0 \subset \calb_{\alpha}$ for each $\alpha > 0$.
\end{lemma}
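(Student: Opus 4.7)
The plan is to show that the same underlying strategy $\delta$ witnessing that $\eta \in \calk_0$ lifts any element of $\calb_0$ into $\calb_\alpha$ by simply shifting the initial wealth from $0$ to $\alpha$. Concretely, let $\alpha > 0$ and $\xi \in \calb_0$ be arbitrary. By definition of $\calb_0$, there are $\eta \in \calk_0$ and $\zeta \in L_+^0$ with $\xi = \eta - \zeta$ and $\xi \geq 0$. By the definition of the respective family, $\eta = I_T^{0,\delta}$ for some strategy $\delta \in \Delta(\bbs)$, together with the constraint corresponding to the chosen family (no constraint for $\cali(\bbs)$, admissibility for $\cali^{\adm}(\bbs)$, nonnegativity for $\cali^+(\bbs)$).

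The key observation is that by definition $I^{\alpha,\delta} = \alpha + \delta \bdot S = \alpha + I^{0,\delta}$, so adding a positive constant to the integral process preserves every one of the three constraints: if $I^{0,\delta} \geq -a$ for some $a \in \bbr_+$, then $I^{\alpha,\delta} \geq \alpha - a$ is still admissible; if $I^{0,\delta} \geq 0$, then $I^{\alpha,\delta} \geq \alpha > 0$; and of course there is nothing to check in the unrestricted case. Hence $\eta' := I_T^{\alpha,\delta} = \alpha + \eta \in \calk_\alpha$ in all three situations.

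It then remains to read off $\xi \in \calb_\alpha$ from the decomposition
\begin{align*}
\xi = \eta - \zeta = (\alpha + \eta) - (\alpha + \zeta) = \eta' - (\alpha + \zeta),
\end{align*}
where $\alpha + \zeta \in L_+^0$ since $\alpha > 0$ and $\zeta \geq 0$. Together with $\xi \in L_+^0$, this gives $\xi \in (\calk_\alpha - L_+^0) \cap L_+^0 = \calb_\alpha$, completing the proof.

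There is essentially no obstacle here: the statement is really a bookkeeping exercise about how integral processes transform under shifts of the initial wealth. The only point that deserves a line of comment is that the same strategy $\delta$ works simultaneously in all three cases because the admissibility and nonnegativity constraints are only strengthened (not weakened) by adding the positive constant $\alpha$ to the integral process.
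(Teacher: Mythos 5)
Your proof is correct and follows essentially the same route as the paper: both arguments shift the integral process by the constant $\alpha$, observe that $I^{\alpha,\delta}=\alpha+I^{0,\delta}$ remains in the relevant family (admissibility and nonnegativity are only strengthened by adding $\alpha>0$), and conclude $\xi\in\calb_\alpha$ from $\xi\leq\alpha+\eta$. Your explicit decomposition $\xi=\eta'-(\alpha+\zeta)$ is just a spelled-out version of the paper's inequality $\xi\leq\alpha+(\delta\bdot S)_T$, so there is nothing further to add.
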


\begin{proof}
Let $\alpha > 0$ and $\xi \in \calb_0$ be arbitrary. Then we have $\xi \in L_+^0$ and there exists $\delta \in \Delta(\bbs)$ such that $\delta \bdot S \in \bbk_0$ and $\xi \leq (\delta \bdot S)_T$. Therefore, we have
\begin{align*}
\xi \leq \alpha + (\delta \bdot S)_T.
\end{align*}
Note that $\alpha + \delta \bdot S \in \bbi_{\alpha}(\bbs)$. Furthermore, if $\delta \bdot S \in \bbi_{\alpha}^{\adm}(\bbs)$, then $\alpha + \delta \bdot S \in \bbi_{\alpha}^{\adm}(\bbs)$, and if $\delta \bdot S \in \bbi_{\alpha}^+(\bbs)$, then $\alpha + \delta \bdot S \in \bbi_{\alpha}^+(\bbs)$. We conclude that $\xi \in \calb_{\alpha}$.
\end{proof}

Recall that we had defined the convex cone $\calc \subset L^{\infty}$ as
\begin{align*}
\calc := (\calk_0 - L_+^0) \cap L^{\infty}.
\end{align*}

\begin{lemma}\label{lemma-B0-NFLVR-I}
Suppose that $(\calk_{\alpha})_{\alpha \geq 0}$ is one of the families
\begin{align*}
(\cali_{\alpha}(\bbs))_{\alpha \geq 0}, (\cali_{\alpha}^{\adm}(\bbs))_{\alpha \geq 0}.
\end{align*}
Then we have
\begin{align*}
\bigg( \bigcap_{\alpha > 0} \calb_{\alpha} \bigg) \cap L^{\infty} \subset \overline{\calc}^{\| \cdot \|_{L^{\infty}}}.
\end{align*}
\end{lemma}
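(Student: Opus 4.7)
The strategy will be to exploit that $\xi$ is bounded: in the $L^\infty$ topology it suffices to approximate $\xi$ by $\xi - 1/n$, and the assumption $\xi \in \calb_{1/n}$ provides this approximation essentially for free.

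Concretely, I will take an arbitrary $\xi \in \bigcap_{\alpha > 0} \calb_{\alpha} \cap L^{\infty}$. For each $n \in \bbn$, the assumption $\xi \in \calb_{1/n}$ together with the definition $\calb_{1/n} = (\calk_{1/n} - L_+^0) \cap L_+^0$ yields a strategy $\delta_n \in \Delta(\bbs)$ with $I^{1/n,\delta_n} = 1/n + \delta_n \bdot S \in \bbk_{1/n}$ such that $\xi \leq 1/n + (\delta_n \bdot S)_T$ almost surely. I then set $c_n := \xi - 1/n$; this lies in $L^\infty$ since $\xi$ does, and can be written as $c_n = (\delta_n \bdot S)_T - Y_n$ with $Y_n := (\delta_n \bdot S)_T - \xi + 1/n \in L_+^0$. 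Once one confirms that $(\delta_n \bdot S)_T \in \calk_0$, this exhibits $c_n \in (\calk_0 - L_+^0) \cap L^\infty = \calc$, and $\|c_n - \xi\|_{L^\infty} = 1/n \to 0$ finishes the argument.

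The one point that requires a brief check is precisely that $\delta_n \bdot S \in \bbk_0$ in both of the two cases covered by the lemma. For $\bbk_\alpha = \bbi_\alpha(\bbs)$ this is immediate, since $\delta_n \bdot S = I^{0,\delta_n} \in \bbi_0(\bbs)$. For $\bbk_\alpha = \bbi_\alpha^{\adm}(\bbs)$, admissibility of $I^{1/n,\delta_n} = 1/n + \delta_n \bdot S$ means $1/n + \delta_n \bdot S \geq -a$ for some $a \geq 0$, hence $\delta_n \bdot S \geq -(a + 1/n)$ is itself admissible, so $\delta_n \bdot S \in \bbi_0^{\adm}(\bbs)$. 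This observation is also presumably the reason the statement excludes the nonnegative family $(\bbi_\alpha^+(\bbs))_{\alpha \geq 0}$: bounding a process below by $-1/n$ does not yield nonnegativity, so the analogous step would fail. Beyond this small verification there is no real obstacle, and the proof should be short.
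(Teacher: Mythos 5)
Your proof is correct and follows essentially the same route as the paper's: for each $\alpha>0$ you extract a strategy $\delta^{\alpha}$ with $\xi \leq \alpha + (\delta^{\alpha} \bdot S)_T$, check that $\delta^{\alpha} \bdot S \in \bbk_0$ in both the unrestricted and the admissible case, and exhibit an element of $\calc$ within $\alpha$ of $\xi$ in $L^{\infty}$. The only difference is cosmetic --- the paper uses the approximant $\xi \wedge (\delta^{\alpha} \bdot S)_T$ where you use $\xi - \alpha$, which if anything is slightly cleaner since it is manifestly in $L^{\infty}$ --- and your remark about why the nonnegative family $(\cali_{\alpha}^{+}(\bbs))_{\alpha \geq 0}$ is excluded is exactly right.
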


\begin{proof}
Let
\begin{align*}
\xi \in \bigg( \bigcap_{\alpha > 0} \calb_{\alpha} \bigg) \cap L^{\infty}
\end{align*}
be arbitrary. Furthermore, let $\alpha > 0$ be arbitrary. Then there is a strategy $\delta^{\alpha} \in \Delta(\bbs)$ such that
\begin{align*}
\xi \leq \alpha + (\delta^{\alpha} \bdot S)_T.
\end{align*}
We have $\alpha + \delta^{\alpha} \bdot S \in \bbi_{\alpha}(\bbs)$ and $\delta^{\alpha} \bdot S \in \bbi_0(\bbs)$. If $\alpha + \delta^{\alpha} \bdot S \in \bbi_{\alpha}^{\adm}(\bbs)$, then we have $\delta^{\alpha} \bdot S \in \bbi_0^{\adm}(\bbs)$. We set
\begin{align*}
\xi_{\alpha} := \xi \wedge (\delta^{\alpha} \bdot S)_T.
\end{align*}
Since $(\delta^{\alpha} \bdot S)_T \in \calk_0$, we have $\xi_{\alpha} \in \calk_0 - L_+^0$. Furthermore, we have
\begin{align*}
| \xi - (\delta^{\alpha} \bdot S)_T | \leq \alpha.
\end{align*}
Since $\xi \in L^{\infty}$, we deduce that $(\delta^{\alpha} \bdot S)_T \in L^{\infty}$, and hence $\xi_{\alpha} \in L^{\infty}$, showing that $\xi_{\alpha} \in \calc$. Moreover, we have
\begin{align*}
| \xi_{\alpha} - (\delta^{\alpha} \bdot S)_T | \leq \alpha.
\end{align*}
Therefore, we obtain $\| \xi_{\alpha} - \xi \|_{L^{\infty}} \to 0$ for $\alpha \downarrow 0$, showing that $\xi \in \overline{\calc}^{\| \cdot \|_{L^{\infty}}}$.
\end{proof}

\begin{definition}
We introduce the following families:
\begin{enumerate}
\item We define the family of \emph{self-financing portfolios} $(\bbp_{\sfi,\alpha}(\bbs))_{\alpha \geq 0}$ as
\begin{align*}
\bbp_{\sfi,\alpha}(\bbs) := \{ S^{\delta} : \delta \in \Delta_{\sfi}(\bbs) \text{ and } S_0^{\delta} = \alpha \}, \quad \alpha \geq 0.
\end{align*}

\item We define the family of \emph{admissible self-financing portfolios} $(\bbp_{\sfi,\alpha}^{\adm}(\bbs))_{\alpha \geq 0}$ as
\begin{align*}
\bbp_{\sfi,\alpha}^{\adm}(\bbs) := \{ X \in \bbp_{\sfi,\alpha}(\bbs) : X \text{ is admissible} \}, \quad \alpha \geq 0.
\end{align*}

\item We define the family of \emph{nonnegative self-financing portfolios} $(\bbp_{\sfi,\alpha}^+(\bbs))_{\alpha \geq 0}$ as
\begin{align*}
\bbp_{\sfi,\alpha}^+(\bbs) := \{ X \in \bbp_{\sfi,\alpha}(\bbs) : X \geq 0 \}, \quad \alpha \geq 0.
\end{align*}
\item We denote by $(\calp_{\sfi,\alpha}(\bbs))_{\alpha \geq 0}$, $(\calp_{\sfi,\alpha}^{\adm}(\bbs))_{\alpha \geq 0}$ and $(\calp_{\sfi,\alpha}^{\adm}(\bbs))_{\alpha \geq 0}$ the respective families of random variables defined according to (\ref{constr}).
\end{enumerate}
\end{definition}

For each $i \in I$ we denote by $e_i \in \Delta(\bbs)$ the strategy with components
\begin{align*}
e_i^j =
\begin{cases}
1, & \text{if $j=i$,}
\\ 0, & \text{otherwise.}
\end{cases}
\end{align*}

\begin{lemma}\label{lemma-sf-mult-const}
The following statements are true:
\begin{enumerate}
\item For each $i \in I$ we have $e_i \in \Delta_{\sfi}(\bbs)$.

\item Let $\delta, \vartheta \in \Delta_{\sfi}(\bbs)$ and $a, b \in \bbr$ be arbitrary. Then we have $a \delta + b \vartheta \in \Delta_{\sfi}(\bbs)$ and
\begin{align*}
S^{a \delta + b \vartheta} = a S^{\delta} + b S^{\vartheta}.
\end{align*}
\end{enumerate}
\end{lemma}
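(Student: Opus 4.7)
My plan is to reduce both statements to the bilinearity of the stochastic integral (Theorem~\ref{thm-bil-2}) together with the elementary observation that the portfolio map $\delta \mapsto \delta \cdot S$ is linear in $\delta$.

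For statement~(1), I would first unpack the definitions. By construction $e_i \cdot S = S^i$, so the portfolio is $S^{e_i} = S^i$, and its initial value is $S_0^{e_i} = S_0^i$. The self-financing condition $S^{e_i} = S_0^{e_i} + e_i \bdot S$ therefore reduces to the identity $e_i \bdot S = S^i - S_0^i$, which is a basic property of stochastic vector integration applied to the constant integrand equal to one in the $i$-th coordinate and zero elsewhere (so that only the finite set $F = \{i\}$ matters, and we may appeal to the one-dimensional fact $1 \bdot S^i = S^i - S_0^i$).

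For statement~(2), the first step is to check that $a\delta + b\vartheta$ is a strategy. The finite supports $F_\delta, F_\vartheta \subset I$ of $\delta$ and $\vartheta$ provided by Definition~\ref{def-strategy} have union a finite set $F$ containing the support of $a\delta + b\vartheta$, and $\delta^F, \vartheta^F \in L(S^F)$ by a standard extension-by-zero argument; Theorem~\ref{thm-bil-2} then gives $a\delta + b\vartheta \in \Delta(\bbs)$ together with the identity $(a\delta + b\vartheta) \bdot S = a(\delta \bdot S) + b(\vartheta \bdot S)$. Next, from the linearity of the pointwise dot product $\delta \cdot S = \sum_{i \in F} \delta^i S^i$ I get $S^{a\delta + b\vartheta} = aS^\delta + bS^\vartheta$, which in particular yields $S_0^{a\delta + b\vartheta} = aS_0^\delta + bS_0^\vartheta$. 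Combining these with the self-financing hypotheses $S^\delta = S_0^\delta + \delta \bdot S$ and $S^\vartheta = S_0^\vartheta + \vartheta \bdot S$ gives
\begin{align*}
S^{a\delta + b\vartheta} &= aS^\delta + bS^\vartheta = (aS_0^\delta + bS_0^\vartheta) + a(\delta \bdot S) + b(\vartheta \bdot S) \\
&= S_0^{a\delta + b\vartheta} + (a\delta + b\vartheta) \bdot S,
\end{align*}
which is exactly the self-financing condition for $a\delta + b\vartheta$, and the formula $S^{a\delta + b\vartheta} = aS^\delta + bS^\vartheta$ is the second claim.

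Both parts are essentially bookkeeping on top of already-cited results, so I do not expect a genuine obstacle. The only mildly delicate point is the handling of the different finite supports $F_\delta$ and $F_\vartheta$ when invoking Theorem~\ref{thm-bil-2}, but this is standard and resolved by embedding both strategies into $S^{F_\delta \cup F_\vartheta}$ before applying the bilinearity of vector stochastic integration.
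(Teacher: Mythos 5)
Your proposal is correct and follows essentially the same route as the paper: part (1) reduces to the identity $e_i \bdot S = S^i - S_0^i$ for the constant unit strategy, and part (2) combines the linearity of $\delta \mapsto \delta \cdot S$ with the bilinearity of the vector stochastic integral from Theorem \ref{thm-bil-2} and the two self-financing hypotheses. Your extra remark about merging the finite supports $F_\delta$ and $F_\vartheta$ before invoking Theorem \ref{thm-bil-2} is a detail the paper leaves implicit, but it is exactly the right justification and changes nothing substantive.
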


\begin{proof}
We have
\begin{align*}
S^{e_i} = e_i \cdot S = e_i \cdot S_0 + e_i \cdot (S - S_0) = S_0^{e_i} + e_i \bdot S,
\end{align*}
proving the first statement. Now, let $\delta, \vartheta \in \Delta_{\sfi}(\bbs)$ and $a, b \in \bbr$ be arbitrary. Then we have
\begin{align*}
S^{a \delta + b \vartheta} = (a \delta + b \vartheta) \cdot S = a (\delta \cdot S) + b (\vartheta \cdot S) = a S^{\delta} + b S^{\vartheta}.
\end{align*}
Since $\delta$ and $\vartheta$ are self-financing, we have
\begin{align*}
S^{\delta} = S_0^{\delta} + \delta \bdot S,
\\ S^{\vartheta} = S_0^{\vartheta} + \vartheta \bdot S.
\end{align*}
Therefore, using Theorem \ref{thm-bil-2} we obtain
\begin{align*}
S^{a \delta + b \vartheta} &= a S^{\delta} + b S^{\vartheta}
\\ &= a (S_0^{\delta} + \delta \bdot S) + b (S_0^{\vartheta} + \vartheta \bdot S)
\\ &= a (\delta_0 \cdot S_0) + b (\vartheta_0 \cdot S_0) + a (\delta \bdot S) + b (\vartheta \bdot S)
\\ &= (a \delta_0 + b \vartheta_0) \cdot S_0 + (a \delta + b \vartheta) \bdot S
\\ &= S_0^{a \delta + b \vartheta} + (a \delta + b \vartheta) \bdot S,
\end{align*}
showing that $a \delta + b \vartheta \in \Delta_{\sfi}(\bbs)$.
\end{proof}

Recall that we had defined the family $(\calb_{\alpha})_{\alpha \geq 0}$ of convex, semi-solid subsets of $L_+^0$ as
\begin{align*}
\calb_{\alpha} := (\calk_{\alpha} - L_+^0) \cap L_+^0, \quad \alpha \geq 0.
\end{align*}

\begin{lemma}\label{lemma-B0-B-alpha-sfi}
Suppose we have $S_0^i > 0$ for some $i \in I$, and let $(\calk_{\alpha})_{\alpha \geq 0}$ be one of the families
\begin{align*}
(\calp_{\sfi,\alpha}(\bbs))_{\alpha \geq 0}, (\calp_{\sfi,\alpha}^{\adm}(\bbs))_{\alpha \geq 0}, (\calp_{\sfi,\alpha}^+(\bbs))_{\alpha \geq 0}.
\end{align*}
Then we have $\calb_0 \subset \calb_{\alpha}$ for each $\alpha > 0$.
\end{lemma}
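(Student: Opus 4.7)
The plan is to exploit the single asset $S^i$ with $S_0^i > 0$ as a convenient ``long'' buy-and-hold position that we can add to a self-financing strategy with zero initial capital in order to shift its initial wealth to any desired level $\alpha > 0$, while preserving nonnegativity, admissibility and the dominance relation.

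First I would take an arbitrary $\xi \in \calb_0$. By definition this gives $\xi \in L_+^0$ together with a self-financing strategy $\delta \in \Delta_{\sfi}(\bbs)$ satisfying $S_0^{\delta} = 0$, the appropriate sign/admissibility restriction (i.e.\ $S^{\delta}$ admissible or $S^{\delta} \geq 0$, depending on which of the three families we are in) and $\xi \leq S_T^{\delta}$. Since $\calf_0$ is $\bbp$-trivial, the initial value $S_0^i$ is $\bbp$-almost surely a (strictly positive) constant, so the scalar $\alpha / S_0^i$ is well defined.

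The central construction is then to set $\vartheta := \delta + \tfrac{\alpha}{S_0^i}\, e_i$. By Lemma \ref{lemma-sf-mult-const} we have $\vartheta \in \Delta_{\sfi}(\bbs)$, and
\begin{align*}
S^{\vartheta} = S^{\delta} + \frac{\alpha}{S_0^i}\, S^i, \qquad S_0^{\vartheta} = 0 + \frac{\alpha}{S_0^i}\, S_0^i = \alpha.
\end{align*}
Because $S^i \geq 0$ by assumption, the additive perturbation $\tfrac{\alpha}{S_0^i} S^i$ is nonnegative everywhere. Consequently, if $S^{\delta}$ is admissible (bounded below by some constant $-a$) then so is $S^{\vartheta}$, and if $S^{\delta} \geq 0$ then $S^{\vartheta} \geq 0$ as well. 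Hence $S^{\vartheta}$ lies in whichever of $\bbp_{\sfi,\alpha}(\bbs)$, $\bbp_{\sfi,\alpha}^{\adm}(\bbs)$, $\bbp_{\sfi,\alpha}^+(\bbs)$ we started with, so $S_T^{\vartheta} \in \calk_{\alpha}$.

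Finally, the dominance $\xi \leq S_T^{\delta} \leq S_T^{\delta} + \tfrac{\alpha}{S_0^i} S_T^i = S_T^{\vartheta}$ combined with $\xi \in L_+^0$ yields $\xi \in (\calk_{\alpha} - L_+^0) \cap L_+^0 = \calb_{\alpha}$. I do not anticipate a genuine obstacle: the only point requiring care is that $S_0^i$ be almost surely a deterministic positive number (so that $\alpha / S_0^i$ is a constant scalar multiplier to which Lemma \ref{lemma-sf-mult-const} applies), which is exactly ensured by the standing hypothesis that $\calf_0$ is $\bbp$-trivial.
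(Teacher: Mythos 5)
Your proof is correct and follows essentially the same route as the paper: both add the buy-and-hold position $\tfrac{\alpha}{S_0^i}e_i$ to the zero-initial-wealth strategy, invoke Lemma \ref{lemma-sf-mult-const} to keep the sum self-financing, and use $S^i \geq 0$ to preserve admissibility/nonnegativity and the dominance $\xi \leq S_T^{\vartheta}$. Your explicit remark that the $\bbp$-triviality of $\calf_0$ makes $\alpha/S_0^i$ a deterministic scalar is a nice touch that the paper leaves implicit.
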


\begin{proof}
Let $\alpha > 0$ and $\xi \in \calb_0$ be arbitrary. Then we have $\xi \in L_+^0$ and there exists $\delta \in \Delta_{\sfi}(\bbs)$ such that $S^{\delta} \in \bbk_0$ and $\xi \leq S_T^{\delta}$. We define
\begin{align*}
\theta := \frac{\alpha e_i}{S_0^i} \quad \text{and} \quad \vartheta := \delta + \theta.
\end{align*}
By Lemma \ref{lemma-sf-mult-const} we have $\theta,\vartheta \in \Delta_{\sfi}(\bbs)$ and
\begin{align*}
S^{\vartheta} = S^{\delta} + S^{\theta}.
\end{align*}
We have $S^{\theta} = S \cdot \theta \geq 0$ because $S^i \geq 0$. Therefore, we obtain
\begin{align*}
\xi \leq S_T^{\delta} \leq S_T^{\delta} + S_T^{\theta} = S_T^{\vartheta}.
\end{align*}
Note that $S_0^{\delta} = 0$ and $S_0^{\theta} = \alpha$. Therefore, we have $S_0^{\vartheta} = \alpha$, and hence $S^{\vartheta} \in \bbp_{\sfi,\alpha}(\bbs)$. Furthermore, if $S^{\delta} \in \bbp_{\sfi,0}^{\adm}(\bbs)$, then $S^{\vartheta} \in \bbp_{\sfi,\alpha}^{\adm}(\bbs)$, and if $S^{\delta} \in \bbp_{\sfi,0}^+(\bbs)$, then $S^{\vartheta} \in \bbp_{\alpha,\sfi}^+(\bbs)$. We conclude that $\xi \in \calb_{\alpha}$.
\end{proof}

Recall that we had defined the convex cone $\calc \subset L^{\infty}$ as
\begin{align*}
\calc := (\calk_0 - L_+^0) \cap L^{\infty}.
\end{align*}

\begin{lemma}\label{lemma-B0-NFLVR-P}
Suppose that $S_0^i > 0$ and $S_T^i \in L^{\infty}$ for some $i \in I$, and let $(\calk_{\alpha})_{\alpha \geq 0}$ be one of the families
\begin{align*}
(\calp_{\sfi,\alpha}(\bbs))_{\alpha \geq 0}, (\calp_{\sfi,\alpha}^{\adm}(\bbs))_{\alpha \geq 0}.
\end{align*}
Then we have
\begin{align*}
\bigg( \bigcap_{\alpha > 0} \calb_{\alpha} \bigg) \cap L^{\infty} \subset \overline{\calc}^{\| \cdot \|_{L^{\infty}}}.
\end{align*}
\end{lemma}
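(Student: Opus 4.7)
The proof should parallel that of Lemma \ref{lemma-B0-NFLVR-I}, replacing the shift by the constant $\alpha$ used there with the shift by the self-financing strategy $\theta^\alpha := (\alpha/S_0^i)e_i$ already employed in Lemma \ref{lemma-B0-B-alpha-sfi}. The role of the hypothesis $S_T^i \in L^\infty$ is to make the terminal approximation error uniformly bounded.

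First I would fix $\xi \in \bigl(\bigcap_{\alpha>0}\calb_\alpha\bigr) \cap L^\infty$ and, for each $\alpha > 0$, use $\xi \in \calb_\alpha$ to pick a self-financing strategy $\delta^\alpha \in \Delta_{\sfi}(\bbs)$ with $S_0^{\delta^\alpha}=\alpha$ (and, in the admissible case, with $S^{\delta^\alpha}$ admissible) such that $\xi \leq S_T^{\delta^\alpha}$. Second, set $\vartheta^\alpha := \delta^\alpha - (\alpha/S_0^i)e_i$. By Lemma \ref{lemma-sf-mult-const}, $\vartheta^\alpha$ is again self-financing with $S_0^{\vartheta^\alpha} = 0$ and portfolio value
$$
S^{\vartheta^\alpha} \;=\; S^{\delta^\alpha} - \frac{\alpha}{S_0^i}\,S^i.
$$
In the non-admissible case this immediately places $S_T^{\vartheta^\alpha}$ in $\calk_0$; in the admissible case one additionally has to check admissibility of $S^{\vartheta^\alpha}$, which is the analog of the shift-by-$\alpha$ step that appears in the proof of Lemma \ref{lemma-B0-NFLVR-I}.

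Third, I would take $\xi_\alpha := \xi \wedge S_T^{\vartheta^\alpha}$. Since $\xi_\alpha \leq S_T^{\vartheta^\alpha}$ and $S_T^{\vartheta^\alpha} \in \calk_0$, we obtain $\xi_\alpha \in \calk_0 - L_+^0$. Rewriting the inequality $\xi \leq S_T^{\delta^\alpha}$ as $\xi \leq S_T^{\vartheta^\alpha} + (\alpha/S_0^i)S_T^i$ yields the key estimate
$$
0 \;\leq\; \xi - \xi_\alpha \;=\; (\xi - S_T^{\vartheta^\alpha})^+ \;\leq\; \frac{\alpha}{S_0^i}\,S_T^i.
$$
Because $S_T^i \in L^\infty$, this gives $\|\xi_\alpha - \xi\|_{L^\infty} \leq (\alpha/S_0^i)\,\|S_T^i\|_{L^\infty}$; in particular, $\xi_\alpha \in L^\infty$ (since $\xi \in L^\infty$), hence $\xi_\alpha \in \calc$. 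Letting $\alpha \downarrow 0$ then yields $\xi \in \overline{\calc}^{\|\cdot\|_{L^\infty}}$, which is the required inclusion.

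The main obstacle is the admissibility verification in the admissible case. Although $(\alpha/S_0^i)S^i \geq 0$, the process $S^i$ is not assumed to be bounded above by a deterministic constant, so subtracting it from the admissible $S^{\delta^\alpha}$ could in principle destroy the uniform lower bound demanded by admissibility; the analogous step in Lemma \ref{lemma-B0-NFLVR-I} is trivial precisely because there one only subtracts the constant $\alpha$. Hence the argument for the family $(\calp_{\sfi,\alpha}^{\adm}(\bbs))_{\alpha \geq 0}$ requires an additional care, relying on the hypothesis $S_T^i \in L^\infty$ (together with the nonnegativity of $S^i$) to keep the modified portfolio admissible, whereas the argument for $(\calp_{\sfi,\alpha}(\bbs))_{\alpha \geq 0}$ is a direct transcription of the scheme above.
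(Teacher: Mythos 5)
Your argument is essentially the paper's proof: the same shift by the self-financing strategy $\theta^{\alpha} = (\alpha/S_0^i)\,e_i$, the same use of Lemma \ref{lemma-sf-mult-const} to obtain $\vartheta^{\alpha} = \delta^{\alpha} - \theta^{\alpha} \in \Delta_{\sfi}(\bbs)$ with $S_0^{\vartheta^{\alpha}} = 0$ and $S^{\vartheta^{\alpha}} = S^{\delta^{\alpha}} - (\alpha/S_0^i)S^i$, and the same role for $S_T^i \in L^{\infty}$ in the error estimate. The only difference is cosmetic: the paper sets $\xi_{\alpha} := \xi - S_T^{\theta^{\alpha}}$ rather than $\xi \wedge S_T^{\vartheta^{\alpha}}$, which makes the bound $\| \xi - \xi_{\alpha} \|_{L^{\infty}} = (\alpha/S_0^i)\| S_T^i \|_{L^{\infty}}$ an identity instead of an inequality; both choices are dominated by $S_T^{\vartheta^{\alpha}}$ and hence lie in $\calk_0 - L_+^0$. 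Concerning the obstacle you single out for the admissible family: the paper does not resolve it either --- it simply asserts $S^{\vartheta^{\alpha}} \in \bbk_0$ without commenting on admissibility. Your instinct that $S_T^i \in L^{\infty}$ alone cannot restore admissibility is correct (it controls only the terminal value, not the running supremum of $S^i$), so covering $(\calp_{\sfi,\alpha}^{\adm}(\bbs))_{\alpha \geq 0}$ would require either a pathwise bound on $S^i$ or a different admissible dominating element of $\bbp_{\sfi,0}^{\adm}(\bbs)$; this is a gap in the paper's own argument rather than something your reconstruction misses.
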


\begin{proof}
Let
\begin{align*}
\xi \in \bigg( \bigcap_{\alpha > 0} \calb_{\alpha} \bigg) \cap L^{\infty}
\end{align*}
be arbitrary. Furthermore, let $\alpha > 0$ be arbitrary. Then there is a self-financing strategy $\delta^{\alpha} \in \Delta_{\sfi}(\bbs)$ such that $S^{\delta^{\alpha}} \in \bbk_\alpha$ and $\xi \leq S_T^{\delta^{\alpha}}$. We define
\begin{align*}
\theta^{\alpha} := \frac{\alpha e_i}{S_0^i} \quad \text{and} \quad \vartheta^{\alpha} := \delta^{\alpha} - \theta^{\alpha}.
\end{align*}
By Lemma \ref{lemma-sf-mult-const} we have $\theta^{\alpha},\vartheta^{\alpha} \in \Delta_{\sfi}(\bbs)$ and
\begin{align*}
S^{\vartheta^{\alpha}} = S^{\delta^{\alpha}} - S^{\theta^{\alpha}}.
\end{align*}
Furthermore, we set
\begin{align*}
\xi_{\alpha} := \xi - S_T^{\theta^{\alpha}}.
\end{align*}
Note that
\begin{align*}
S_T^{\theta^{\alpha}} = \frac{\alpha S_T^i}{S_0^i} \in L_+^{\infty},
\end{align*}
and hence $\xi_{\alpha} \in L^{\infty}$. Furthermore, we have $S^{\theta^{\alpha}} \in \bbk_{\alpha}$, and hence $S^{\vartheta^{\alpha}} \in \bbk_0$. Therefore, we obtain
\begin{align*}
\xi_{\alpha} = \xi - S_T^{\theta^{\alpha}} \leq S_T^{\delta^{\alpha}} - S_T^{\theta^{\alpha}} = S_T^{\vartheta^{\alpha}} \in \calk_0,
\end{align*}
and hence $\xi_{\alpha} \in \calc$. Moreover, we have
\begin{align*}
\| \xi - \xi_{\alpha} \|_{L^{\infty}} = \| S_T^{\theta^{\alpha}} \| = \frac{\alpha}{S_0^i} \| S_T^i \|_{L^{\infty}} \to 0
\end{align*}
for $\alpha \downarrow 0$, showing that $\xi \in \overline{\calc}^{\| \cdot \|_{L^{\infty}}}$.
\end{proof}

Now, we are ready to state our main results of this section. Once again, we point out that the market $\bbs$ does not need to have a num\'{e}raire, and that the upcoming results concern, in particular, self-financing portfolios.

\begin{lemma}\label{lemma-ass-really-fulfilled}
Let $(\calk_{\alpha})_{\alpha \geq 0}$ be one of the families
\begin{align*}
&(\cali_{\alpha}(\bbs))_{\alpha \geq 0}, (\cali_{\alpha}^{\adm}(\bbs))_{\alpha \geq 0}, (\cali_{\alpha}^+(\bbs))_{\alpha \geq 0},
\\ &(\calp_{\sfi,\alpha}(\bbs))_{\alpha \geq 0}, (\calp_{\sfi,\alpha}^{\adm}(\bbs))_{\alpha \geq 0}, (\calp_{\sfi,\alpha}^+(\bbs))_{\alpha \geq 0}.
\end{align*}
Then Assumptions \ref{ass-convex-processes-0} and \ref{ass-convex-processes-1} are fulfilled.
\end{lemma}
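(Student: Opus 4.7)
The plan is to use Remark \ref{rem-together} as a reduction, so that it suffices to verify the single stronger relation
\begin{align*}
aX + bY \in \bbk_{a\alpha + b\beta} \quad \text{for all } a,b,\alpha,\beta \in \bbr_+,\ X \in \bbk_\alpha,\ Y \in \bbk_\beta.
\end{align*}
This handles Assumption \ref{ass-convex-processes-1} directly, and also yields Assumption \ref{ass-convex-processes-0} as a byproduct, since setting $\alpha = \beta = 0$ gives closure of $\bbk_0$ under nonnegative linear combinations. So I would prove this single relation separately for each of the six families, organized into two groups of three.

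For the three integral-process families $(\bbi_\alpha(\bbs))$, $(\bbi_\alpha^{\adm}(\bbs))$, $(\bbi_\alpha^+(\bbs))$, fix $X \in \bbk_\alpha$ and $Y \in \bbk_\beta$. By definition $X = I^{\alpha,\delta}$ and $Y = I^{\beta,\vartheta}$ for some $\delta,\vartheta \in \Delta(\bbs)$. By Theorem \ref{thm-bil-2}, $a\delta + b\vartheta \in \Delta(\bbs)$, and by Lemma \ref{lemma-W-mult-const},
\begin{align*}
aX + bY = a I^{\alpha,\delta} + b I^{\beta,\vartheta} = I^{a\alpha + b\beta,\, a\delta + b\vartheta} \in \bbi_{a\alpha+b\beta}(\bbs).
\end{align*}
For the admissible variant, I then observe that if $X \geq -c_1$ and $Y \geq -c_2$ for constants $c_1,c_2 \in \bbr_+$, then $aX + bY \geq -(ac_1 + bc_2)$, so admissibility is preserved under nonnegative linear combinations. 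For the nonnegative variant, $X, Y \geq 0$ and $a,b \in \bbr_+$ immediately give $aX + bY \geq 0$.

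For the three self-financing-portfolio families $(\calp_{\sfi,\alpha}(\bbs))$, $(\calp_{\sfi,\alpha}^{\adm}(\bbs))$, $(\calp_{\sfi,\alpha}^+(\bbs))$, fix $X = S^\delta$ and $Y = S^\vartheta$ with $\delta,\vartheta \in \Delta_{\sfi}(\bbs)$, $S_0^\delta = \alpha$, $S_0^\vartheta = \beta$. Lemma \ref{lemma-sf-mult-const} gives $a\delta + b\vartheta \in \Delta_{\sfi}(\bbs)$ together with the identity
\begin{align*}
S^{a\delta + b\vartheta} = a S^\delta + b S^\vartheta = aX + bY,
\end{align*}
and taking the value at $t=0$ shows $S_0^{a\delta+b\vartheta} = a\alpha + b\beta$. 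Hence $aX + bY \in \bbp_{\sfi,\,a\alpha+b\beta}(\bbs)$. Admissibility and nonnegativity are again preserved under nonnegative linear combinations by the same elementary arguments as above.

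There is no substantial obstacle here; the entire statement is a direct bookkeeping consequence of the linearity results already established (Theorem \ref{thm-bil-2}, Lemma \ref{lemma-W-mult-const}, Lemma \ref{lemma-sf-mult-const}), plus the trivial observation that admissibility and pointwise nonnegativity are stable under nonnegative linear combinations. The only mild point worth flagging is the use of Remark \ref{rem-together} up front, which spares us from separately treating the degenerate cases $\alpha = 0$ or $\beta = 0$ in Assumption \ref{ass-convex-processes-1}.
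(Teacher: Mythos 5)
Your proposal is correct and follows essentially the same route as the paper: verify the single relation $aX+bY\in\bbk_{a\alpha+b\beta}$ for all $a,b,\alpha,\beta\in\bbr_+$ via Lemma \ref{lemma-W-mult-const} (resp.\ Lemma \ref{lemma-sf-mult-const}) and then invoke Remark \ref{rem-together}. You are in fact somewhat more explicit than the paper, which leaves the preservation of admissibility and nonnegativity under nonnegative linear combinations implicit.
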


\begin{proof}
By Lemmas \ref{lemma-W-mult-const} and \ref{lemma-sf-mult-const} we have
\begin{align*}
a X + b Y \in \bbk_{a \alpha + b \beta}.
\end{align*}
for all $a,b \in \bbr_+$, $\alpha,\beta \in \bbr_+$ and $X \in \bbk_{\alpha}$, $Y \in \bbk_{\beta}$. In view of Remark \ref{rem-together}, this shows that Assumptions \ref{ass-convex-processes-0} and \ref{ass-convex-processes-1} are fulfilled.
\end{proof}

\begin{theorem}\label{thm-final}
Let $(\calk_{\alpha})_{\alpha \geq 0}$ be one of the families
\begin{align*}
&(\cali_{\alpha}(\bbs))_{\alpha \geq 0}, (\cali_{\alpha}^{\adm}(\bbs))_{\alpha \geq 0}, (\cali_{\alpha}^+(\bbs))_{\alpha \geq 0},
\\ &(\calp_{\sfi,\alpha}(\bbs))_{\alpha \geq 0}, (\calp_{\sfi,\alpha}^{\adm}(\bbs))_{\alpha \geq 0}, (\calp_{\sfi,\alpha}^+(\bbs))_{\alpha \geq 0}.
\end{align*}
Then the following statements are equivalent:
\begin{enumerate}
\item[(i)] $\calk_1$ satisfies NUPBR. 

\item[(ii)] $\calk_1$ satisfies NAA$_1$.

\item[(iii)] $\calk_1$ satisfies NA$_1$.

\item[(iv)] We have $\bigcap_{\alpha > 0} \calb_{\alpha} = \{ 0 \}$.
\end{enumerate}
\end{theorem}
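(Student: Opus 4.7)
The plan is to prove Theorem \ref{thm-final} by reducing it to Theorem \ref{thm-NA-concepts-L0} via the passage from processes to terminal random variables. The work has essentially already been done: the bulk of the section sets up the appropriate linearity/convexity structures for integral processes and for self-financing portfolios, and the theorem above is just the harvesting step.

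First, I would invoke Lemma \ref{lemma-ass-really-fulfilled}, which tells us that for any of the six families $(\bbk_\alpha)_{\alpha \geq 0}$ listed in the statement, Assumptions \ref{ass-convex-processes-0} and \ref{ass-convex-processes-1} hold at the level of processes. This is the key structural content, and it was in turn derived from the bilinearity of the stochastic integral (Theorem \ref{thm-bil-2}, applied via Lemma \ref{lemma-W-mult-const}) in the integral-process cases, and from Lemma \ref{lemma-sf-mult-const} in the self-financing cases, together with Remark \ref{rem-together}.

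Next, I would pass to the terminal random variables via the evaluation map $\varphi : \bbl \to L^0$, $X \mapsto X_T$. By Lemma \ref{lemma-ass-processes-RV}, the convex-cone and convex-combination properties of the process families $(\bbk_\alpha)$ transfer verbatim to the random variable families $(\calk_\alpha)$ defined by (\ref{constr}). Hence Assumptions \ref{ass-convex-0} and \ref{ass-convex-1} are fulfilled in $V = L^0$ for each of the six choices.

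Finally, since $(L^0,\leq)$ is the topological vector lattice under consideration and Assumptions \ref{ass-convex-0} and \ref{ass-convex-1} hold, Theorem \ref{thm-NA-concepts-L0} applies directly: the equivalences (i) $\Leftrightarrow$ (ii) $\Leftrightarrow$ (iii) $\Leftrightarrow$ (iv) stated here are exactly conditions (i)--(iv) from that theorem. There is no real obstacle to overcome; the proof is a one-line combination of Lemmas \ref{lemma-ass-really-fulfilled} and \ref{lemma-ass-processes-RV} with Theorem \ref{thm-NA-concepts-L0}. What makes the result substantive is not this final assembly but rather the preparatory lemmas that verify the convex-cone and scaling structure in the possibly num\'{e}raire-free setting, particularly for self-financing portfolios where one had to exploit $S_0^i > 0$ for some index $i$ together with the bilinearity of $\delta \cdot S$.
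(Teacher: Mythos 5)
Your proposal is correct and coincides exactly with the paper's proof: Lemma \ref{lemma-ass-really-fulfilled} (together with Lemma \ref{lemma-ass-processes-RV}) verifies Assumptions \ref{ass-convex-0} and \ref{ass-convex-1} for each of the six families, after which Theorem \ref{thm-NA-concepts-L0} yields the four equivalences directly. Your observation that the substance lies in the preparatory lemmas rather than in this final assembly is also accurate.
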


\begin{proof}
Taking into account Lemma \ref{lemma-ass-really-fulfilled} (and Lemma \ref{lemma-ass-processes-RV}), the stated equivalences are a consequence of Theorem \ref{thm-NA-concepts-L0}.
\end{proof}

\begin{proposition}\label{prop-final-1}
Let $(\calk_{\alpha})_{\alpha \geq 0}$ be one of the families
\begin{align*}
(\cali_{\alpha}(\bbs))_{\alpha \geq 0}, (\cali_{\alpha}^{\adm}(\bbs))_{\alpha \geq 0}, (\cali_{\alpha}^+(\bbs))_{\alpha \geq 0}.
\end{align*}
If $\calk_1$ satisfies NA$_1$, then $\calk_0$ satisfies NA.
\end{proposition}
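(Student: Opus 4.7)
The plan is to reduce the proposition directly to the abstract criterion already established in the paper, namely Proposition \ref{prop-NA1-NA} (in its part 2), which says that if $\calb_0 \subset \bigcap_{\alpha > 0} \calb_\alpha$ then NA$_1$ for $\calk_1$ implies NA for $\calk_0$. All the work is thus in verifying that inclusion for each of the three families listed.

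First I would note, via Lemma \ref{lemma-ass-really-fulfilled} together with Lemma \ref{lemma-ass-processes-RV}, that Assumptions \ref{ass-convex-0} and \ref{ass-convex-1} are satisfied for each of $(\cali_\alpha(\bbs))_{\alpha \geq 0}$, $(\cali_\alpha^{\adm}(\bbs))_{\alpha \geq 0}$, $(\cali_\alpha^+(\bbs))_{\alpha \geq 0}$, so that the abstract framework of Section \ref{sec-NA-tvs} applies and the sets $\calb_\alpha$ are well defined. Then I would invoke Lemma \ref{lemma-B0-B-alpha-I}, which is tailored to exactly these three families and states that $\calb_0 \subset \calb_\alpha$ for every $\alpha > 0$. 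Taking intersection over $\alpha > 0$ on the right-hand side yields $\calb_0 \subset \bigcap_{\alpha > 0} \calb_\alpha$.

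Finally, with this inclusion in hand, the second statement of Proposition \ref{prop-NA1-NA} gives that NA$_1$ for $\calk_1$ implies NA for $\calk_0$, which is the claim.

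There is no real obstacle here, since both ingredients are already proved earlier in the paper. The only thing to be careful about is to check that Lemma \ref{lemma-B0-B-alpha-I} indeed covers all three families uniformly (it does, because the construction there, adding the constant $\alpha$ to $\delta \bdot S$, preserves each of the three properties: being an integral process, being admissible, and being nonnegative).
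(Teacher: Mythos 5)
Your proof is correct and is essentially identical to the paper's: it combines Lemma \ref{lemma-ass-really-fulfilled} (with Lemma \ref{lemma-ass-processes-RV}) to validate the standing assumptions, Lemma \ref{lemma-B0-B-alpha-I} for the inclusion $\calb_0 \subset \bigcap_{\alpha>0}\calb_{\alpha}$, and part (2) of Proposition \ref{prop-NA1-NA} to conclude. Nothing further is needed.
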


\begin{proof}
Taking into account Lemma \ref{lemma-ass-really-fulfilled} (and Lemma \ref{lemma-ass-processes-RV}), this is a consequence of Proposition \ref{prop-NA1-NA} and Lemma \ref{lemma-B0-B-alpha-I}.
\end{proof}

\begin{proposition}\label{prop-final-2}
Let $(\calk_{\alpha})_{\alpha \geq 0}$ be one of the families
\begin{align*}
(\cali_{\alpha}(\bbs))_{\alpha \geq 0}, (\cali_{\alpha}^{\adm}(\bbs))_{\alpha \geq 0}.
\end{align*}
If $\calk_0$ satisfies NFLVR, then $\calk_1$ satisfies NA$_1$.
\end{proposition}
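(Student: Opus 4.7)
The plan is to reduce the proposition to a direct application of Proposition \ref{prop-NFL-NA-1-L0} with $p = \infty$. First I would verify that we are actually in the setting of Section \ref{sec-NA-tvs}: combining Lemma \ref{lemma-ass-really-fulfilled} with Lemma \ref{lemma-ass-processes-RV}, the family $(\calk_{\alpha})_{\alpha \geq 0}$ of terminal values of (admissible) integral processes satisfies Assumption \ref{ass-convex-0} (that is, $\calk_0$ is a convex cone) and Assumption \ref{ass-convex-1}, so all the constructions involving $\calb_\alpha$ and $\calc$ from Section \ref{sec-NA-tvs} apply.

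The key structural input is the inclusion
\[
\bigg( \bigcap_{\alpha > 0} \calb_{\alpha} \bigg) \cap L^{\infty} \subset \overline{\calc}^{\| \cdot \|_{L^{\infty}}},
\]
which is exactly what Lemma \ref{lemma-B0-NFLVR-I} supplies for both families $(\cali_{\alpha}(\bbs))_{\alpha \geq 0}$ and $(\cali_{\alpha}^{\adm}(\bbs))_{\alpha \geq 0}$. With this inclusion in hand, Proposition \ref{prop-NFL-NA-1-L0} applied with $p = \infty$ and the topology $\tau$ induced by $\| \cdot \|_{L^\infty}$ then immediately concludes from NFLVR (i.e.\ NFLVR$_{\infty}$) of $\calk_0$ that $\calk_1$ satisfies NA$_1$. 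So the entire proof is a two-line invocation of results already established.

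The main obstacle, had Lemma \ref{lemma-B0-NFLVR-I} not been available, would be establishing the displayed inclusion. The natural route is constructive: given $\xi \in \bigcap_{\alpha > 0} \calb_{\alpha}$ with $\xi \in L^\infty$, one picks for each $\alpha > 0$ a strategy $\delta^{\alpha} \in \Delta(\bbs)$ with $\xi \leq \alpha + (\delta^{\alpha} \bdot S)_T$ and sets $\xi_{\alpha} := \xi \wedge (\delta^{\alpha} \bdot S)_T$. The point is that $\xi_{\alpha}$ lies below $(\delta^{\alpha} \bdot S)_T \in \calk_0$, so $\xi_{\alpha} \in \calk_0 - L_+^0$; the bound $|\xi - (\delta^{\alpha} \bdot S)_T| \leq \alpha$ forces $(\delta^{\alpha} \bdot S)_T \in L^{\infty}$, whence $\xi_{\alpha} \in \calc$, and moreover $\|\xi - \xi_{\alpha}\|_{L^{\infty}} \leq \alpha$, giving norm approximation of $\xi$ by elements of $\calc$. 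In the admissible case one additionally notes that admissibility is preserved under the operation of passing from $\alpha + \delta^{\alpha} \bdot S$ to $\delta^{\alpha} \bdot S$, which is immediate. Once this technical step is in place, the rest of the argument is purely formal.
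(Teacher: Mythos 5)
Your proof is correct and follows exactly the paper's own route: after checking Assumptions \ref{ass-convex-0} and \ref{ass-convex-1} via Lemmas \ref{lemma-ass-really-fulfilled} and \ref{lemma-ass-processes-RV}, the result is obtained by combining Lemma \ref{lemma-B0-NFLVR-I} with Proposition \ref{prop-NFL-NA-1-L0} for $p=\infty$. Your supplementary sketch of the inclusion behind Lemma \ref{lemma-B0-NFLVR-I} also matches the paper's argument for that lemma.
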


\begin{proof}
Taking into account Lemma \ref{lemma-ass-really-fulfilled} (and Lemma \ref{lemma-ass-processes-RV}), this is a consequence of Proposition \ref{prop-NFL-NA-1-L0} and Lemma \ref{lemma-B0-NFLVR-I}.
\end{proof}

\begin{proposition}\label{prop-final-3}
Suppose we have $S_0^i > 0$ for some $i \in I$, and let $(\calk_{\alpha})_{\alpha \geq 0}$ be one of the families
\begin{align*}
(\calp_{\sfi,\alpha}(\bbs))_{\alpha \geq 0}, (\calp_{\sfi,\alpha}^{\adm}(\bbs))_{\alpha \geq 0}, (\calp_{\sfi,\alpha}^+(\bbs))_{\alpha \geq 0}.
\end{align*}
If $\calk_1$ satisfies NA$_1$, then $\calk_0$ satisfies NA.
\end{proposition}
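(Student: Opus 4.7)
The plan is to reduce the statement to Proposition \ref{prop-NA1-NA}(2), which says precisely that if $\calb_0 \subset \bigcap_{\alpha > 0} \calb_{\alpha}$, then NA$_1$ for $\calk_1$ forces NA for $\calk_0$. So the whole task is to place ourselves in the framework of Section \ref{sec-NA-tvs} and to verify the inclusion $\calb_0 \subset \bigcap_{\alpha > 0} \calb_{\alpha}$ in the self-financing portfolio setting.

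First I would invoke Lemma \ref{lemma-ass-really-fulfilled} to confirm that Assumptions \ref{ass-convex-processes-0} and \ref{ass-convex-processes-1} hold for each of the three families of self-financing portfolios, and then Lemma \ref{lemma-ass-processes-RV} to transfer these to Assumptions \ref{ass-convex-0} and \ref{ass-convex-1} on the corresponding families $\calk_0$ and $(\calk_{\alpha})_{\alpha > 0}$ of random variables in $L^0$. This ensures that the abstract no-arbitrage concepts of Section \ref{sec-NA-tvs} apply, and in particular that Proposition \ref{prop-NA1-NA} is available.

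The core step is then Lemma \ref{lemma-B0-B-alpha-sfi}, which is exactly where the hypothesis $S_0^i > 0$ enters and which I would simply apply: it yields $\calb_0 \subset \calb_{\alpha}$ for every $\alpha > 0$, and hence $\calb_0 \subset \bigcap_{\alpha > 0} \calb_{\alpha}$. Conceptually, the argument in that lemma lifts a self-financing strategy $\delta$ with zero initial wealth to the self-financing strategy $\vartheta = \delta + (\alpha/S_0^i) e_i$, which has initial wealth $\alpha$; nonnegativity and admissibility are preserved because $S^i \geq 0$, and the target random variable $\xi$ is then dominated by $S_T^{\vartheta}$. This is precisely why the condition $S_0^i > 0$ is indispensable here.

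Finally, combining the inclusion $\calb_0 \subset \bigcap_{\alpha > 0} \calb_{\alpha}$ with the hypothesis that $\calk_1$ satisfies NA$_1$, Proposition \ref{prop-NA1-NA}(2) immediately yields that $\calk_0$ satisfies NA, which concludes the proof. There is no genuine obstacle here beyond correctly assembling the earlier results; all the technical work has already been carried out in Lemmas \ref{lemma-ass-really-fulfilled} and \ref{lemma-B0-B-alpha-sfi}, and Proposition \ref{prop-final-3} is essentially their clean corollary, mirroring the structure of Proposition \ref{prop-final-1} for integral processes.
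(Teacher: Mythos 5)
Your proof is correct and follows exactly the paper's own argument: verify Assumptions \ref{ass-convex-processes-0} and \ref{ass-convex-processes-1} via Lemma \ref{lemma-ass-really-fulfilled} (transferred to the random-variable level by Lemma \ref{lemma-ass-processes-RV}), apply Lemma \ref{lemma-B0-B-alpha-sfi} to obtain $\calb_0 \subset \bigcap_{\alpha>0}\calb_{\alpha}$, and conclude with Proposition \ref{prop-NA1-NA}. Your explanation of where the hypothesis $S_0^i>0$ enters (to form the shift strategy $\vartheta=\delta+(\alpha/S_0^i)e_i$) is also accurate.
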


\begin{proof}
Taking into account Lemma \ref{lemma-ass-really-fulfilled} (and Lemma \ref{lemma-ass-processes-RV}), this is a consequence of Proposition \ref{prop-NA1-NA} and Lemma \ref{lemma-B0-B-alpha-sfi}.
\end{proof}

\begin{proposition}\label{prop-final-4}
Suppose that $S_0^i > 0$ and $S_T^i \in L^{\infty}$ for some $i \in I$, and let $(\calk_{\alpha})_{\alpha \geq 0}$ be one of the families
\begin{align*}
(\calp_{\sfi,\alpha}(\bbs))_{\alpha \geq 0}, (\calp_{\sfi,\alpha}^{\adm}(\bbs))_{\alpha \geq 0}.
\end{align*}
If $\calk_0$ satisfies NFLVR, then $\calk_1$ satisfies NA$_1$.
\end{proposition}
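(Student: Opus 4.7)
The plan is to follow exactly the template established by Propositions \ref{prop-final-1}--\ref{prop-final-3}: reduce to the abstract result of Section \ref{sec-random-var} and then invoke the corresponding inclusion lemma proved earlier in this section. Concretely, I will combine Proposition \ref{prop-NFL-NA-1-L0} (the abstract passage from NFL$_\tau$ to NA$_1$, applied with $U = L^\infty$ and $\tau$ the norm topology) with Lemma \ref{lemma-B0-NFLVR-P}, which supplies the required inclusion under precisely the two hypotheses $S_0^i > 0$ and $S_T^i \in L^\infty$.

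First I would verify that we are inside the framework of Section \ref{sec-NA-tvs} so that the abstract results apply. By Lemma \ref{lemma-ass-really-fulfilled} the family $(\bbk_{\alpha})_{\alpha \geq 0}$ of self-financing or admissible self-financing portfolios satisfies Assumptions \ref{ass-convex-processes-0} and \ref{ass-convex-processes-1}; then Lemma \ref{lemma-ass-processes-RV} transfers this to Assumptions \ref{ass-convex-0} and \ref{ass-convex-1} for the corresponding families $(\calk_{\alpha})_{\alpha\geq 0}$ of time-$T$ random variables. Thus $\calk_0$ is a convex cone and the convex, semi-solid sets $\calb_{\alpha} = (\calk_\alpha - L_+^0)\cap L_+^0$ as well as the convex cone $\calc = (\calk_0 - L_+^0) \cap L^\infty$ are well-defined.

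Next, by hypothesis there exists $i \in I$ with $S_0^i > 0$ and $S_T^i \in L^\infty$, so Lemma \ref{lemma-B0-NFLVR-P} gives us
\begin{align*}
\bigg( \bigcap_{\alpha > 0} \calb_{\alpha} \bigg) \cap L^{\infty} \subset \overline{\calc}^{\| \cdot \|_{L^{\infty}}}.
\end{align*}
This is the hypothesis needed in Proposition \ref{prop-NFL-NA-1-L0} with $p = \infty$. Assuming $\calk_0$ satisfies NFLVR (i.e.\ NFLVR$_\infty$), that proposition immediately yields NA$_1$ for $\calk_1$.

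There is no real obstacle here; the content sits inside Lemma \ref{lemma-B0-NFLVR-P}, where the bounded asset $S^i$ is used to shift off the initial wealth $\alpha$ of a portfolio dominating $\xi$ via the self-financing strategy $\theta^\alpha = (\alpha/S_0^i)e_i$ while keeping the error $\|S_T^{\theta^\alpha}\|_{L^\infty} = (\alpha/S_0^i)\|S_T^i\|_{L^\infty}$ under control as $\alpha\downarrow 0$. All that remains for the present proposition is to assemble these references in one line, in full analogy with the proof of Proposition \ref{prop-final-2}.
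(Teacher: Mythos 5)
Your proof is correct and matches the paper's own argument exactly: the paper also derives Proposition \ref{prop-final-4} by combining Proposition \ref{prop-NFL-NA-1-L0} (with $p=\infty$) and Lemma \ref{lemma-B0-NFLVR-P}. Your additional verification of the standing assumptions via Lemmas \ref{lemma-ass-really-fulfilled} and \ref{lemma-ass-processes-RV} is a harmless (indeed slightly more careful) elaboration of the same route.
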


\begin{proof}
This is a consequence of Proposition \ref{prop-NFL-NA-1-L0} and Lemma \ref{lemma-B0-NFLVR-P}.
\end{proof}

We emphasize that the previous results are proven in a rather straightforward manner, only relying on our previous results about topological vector lattices and well-known results from stochastic analysis.

\bibliographystyle{plain}

\bibliography{Finance}

\end{document}